\documentclass[11pt]{article}

\usepackage[utf8]{inputenc}
\usepackage[margin = 1in]{geometry}
\usepackage[affil-it]{authblk}
\usepackage{amsmath, amsthm, amssymb}
\usepackage{hyperref}
\usepackage{enumitem}
\usepackage{natbib}
\hypersetup{
    colorlinks=true,
    linkcolor=blue,
    filecolor=blue,      
    urlcolor=cyan,
    citecolor=blue
}

\theoremstyle{plain}
\newtheorem{theorem}{Theorem}[section]
\newtheorem{proposition}[theorem]{Proposition}
\newtheorem{lemma}[theorem]{Lemma}
\newtheorem{corollary}[theorem]{Corollary}
\theoremstyle{definition}
\newtheorem{definition}[theorem]{Definition}

\theoremstyle{remark}
\newtheorem{remark}[theorem]{Remark}

\newcommand{\norm}[1]{\left\lVert#1\right\rVert}
\DeclareMathOperator*{\argmin}{argmin}

\begin{document}
\title{Sharp Inequalities between Total Variation and Hellinger Distances for Gaussian Mixtures}
	
\author[1]{Joonhyuk Jung}
\author[1]{Chao Gao\thanks{The research of CG is supported in part by NSF Grants ECCS-2216912 and DMS-2310769, and an Alfred Sloan fellowship.}}

\affil[1]{
Department of Statistics, University of Chicago
}
\date{}
\maketitle

\begin{abstract}
    We study the relation between the total variation (TV) and Hellinger distances between two Gaussian location mixtures. Our first result establishes a general upper bound: for any two mixing distributions supported on a compact set, the Hellinger distance between the two mixtures is controlled by the TV distance raised to a power $1-o(1)$, where the $o(1)$ term is of order $1/\log\log(1/\mathrm{TV})$. We also construct two sequences of mixing distributions that demonstrate the sharpness of this bound.
    Taken together, our results resolve an open problem raised in \citet{jia2023entropic} and thus lead to an entropic characterization of learning Gaussian mixtures in total variation. Our inequality also yields optimal robust estimation of Gaussian mixtures in Hellinger distance, which has a direct implication for bounding the minimax regret of empirical Bayes under Huber contamination.
\end{abstract}

\begin{sloppypar}
\section{Introduction}
The Gaussian location mixture model is one of the most fundamental models in nonparametric density estimation, Bayesian inference, and clustering \citep{lindsay1995mixture,dasgupta1999learning}. Given a probability measure $\pi$ supported on $\mathbb{R}^d$, the corresponding Gaussian mixture density is defined as
\begin{align*}
    f_\pi(x) := \int_{\mathbb R^d} \phi_d(x-\theta)\,d\pi(\theta),
\end{align*}
where $\phi_d(x) := (2\pi)^{-d/2}\exp(-\|x\|_2^2/2)$ denotes the density of the $d$-dimensional standard Gaussian distribution.

In this paper, we study the relation between the total variation distance $\mathrm{TV}(p,q) := \frac{1}{2}\int |p-q|$ and the Hellinger distance $H(p,q) := \sqrt{\frac{1}{2}\int (\sqrt{p}-\sqrt{q})^2}$ between two Gaussian mixture densities. Without any restriction on the distributions, it is well known that
\begin{align}
    \label{eq:well-known}
    H^2(p,q) \leq \mathrm{TV}(p,q) \leq \sqrt{2}\,H(p,q).    
\end{align}

The Hellinger distance is a commonly used loss function in density estimation \citep{wong1995probability}. It is especially useful for Gaussian location mixture estimation due to its direct implication for bounding the regret of an empirical Bayes estimator based on a plug-in estimator of the prior \citep{jiang2009general}. When the data contain a small fraction of arbitrary outliers, the density estimation problem can be viewed as misspecified under total variation. Therefore, sharp inequalities are necessary to derive optimal error rates for robust density estimation of Gaussian location mixtures, and the inequalities in \eqref{eq:well-known} are too loose for this purpose.

Relations between $f$-divergences for Gaussian location mixtures have been studied in the literature. In particular, for distributions $\pi$ and $\eta$ supported on a bounded Euclidean ball $\{\theta \in \mathbb R^d : \|\theta\|_2 \leq M\}$, \citet{jia2023entropic} proved that the induced Gaussian mixtures $f_\pi$ and $f_\eta$ satisfy
\begin{align}
    \label{eq:Jia-result}
    H^2(f_\pi, f_\eta) \asymp \mathrm{KL}(f_\pi \| f_\eta),    
\end{align}
up to constant factors depending on $M$ and $d$. Here, $\mathrm{KL}(p\|q) := \int p \log \frac{p}{q}$ denotes the Kullback-Leibler divergence. The relation in \eqref{eq:Jia-result} implies an entropic characterization of the minimax rate for estimating Gaussian location mixtures. The paper \citet{jia2023entropic} also investigated the relation between the total variation distance $\mathrm{TV}(f_\pi,f_\eta)$ and the $L_2$ distance $\|f_\pi - f_\eta\|_2$. However, whether the relation $\mathrm{TV}(f_\pi,f_\eta) \asymp H(f_\pi,f_\eta)$ holds was explicitly posed as an open question.

In this paper, we resolve this open problem by proving that
\begin{align}
    \label{eq:HTV}
    H(f_\pi,f_\eta) \leq \mathrm{TV}^{1-o(1)}(f_\pi,f_\eta),    
\end{align}
where the $o(1)$ term in the exponent is of order
\begin{align*}
    \frac{\Theta(1)}{\log\log(1/\mathrm{TV}(f_\pi,f_\eta))}.
\end{align*}

We also construct sequences of distributions $\pi_n$ and $\eta_n$ showing that the $o(1)$ term is indeed necessary, thereby disproving the relation $\mathrm{TV}(f_\pi,f_\eta) \asymp H(f_\pi,f_\eta)$ for Gaussian location mixtures. Our proof is based on an expansion of the ratio $(f_\pi - f_\eta)/\phi_d$ in terms of Hermite polynomials. The key ingredients of the analysis are the derivation of a multivariate Nikolskii-type inequality (Proposition~\ref{proposition:nikolskii}) and a restricted-range inequality (Proposition~\ref{proposition:restricted}).

As a direct application, we show that for density estimation of $f_\pi$ under the Huber contamination model $(1-\epsilon)P_{f_\pi} + \epsilon Q$, where $Q$ is arbitrary, the minimax rate under the Hellinger distance is given by
\begin{align*}
    \epsilon^{1-\frac{\Theta(1)}{\log\log(1/\epsilon)}},    
\end{align*}
provided that the sample size satisfies $n \geq \mathrm{poly}(1/\epsilon)$.

\subsection{Paper Organization}
The remainder of this paper is organized as follows. Our main results are presented in Section~\ref{section:main}, followed by the sharpness construction in Section~\ref{section:sharpness}. Two applications of the main results—an entropic characterization of Gaussian location mixture estimation in total variation and robust density estimation—are discussed in Section~\ref{section:applications}. In Section~\ref{section:discussion}, we briefly discuss several open directions. Due to page limits, most technical proofs are deferred to the appendices.

\subsection{Notation}
\label{section:notation}
Let $\mathbb N_0$ be the set of nonnegative integers and $\mathbb R$ the set of real numbers.
We use the boldface notation, e.g., $\mathbf k$ and $\mathbf l$, for multi-index.
For $\mathbf k = (k_1, \ldots, k_d) \in \mathbb N_0^d$, we write $|\mathbf k| := k_1 + \cdots + k_d$.
We denote by $\norm{\theta}_2$ and $\norm{\theta}_\infty$ the Euclidean norm and $\infty$-norm of $\theta \in \mathbb R^d$, respectively.
For a real matrix $A \in \mathbb R^{m \times n}$, $\norm{A}_\infty := \max\{\norm{Ax}_\infty: \norm{x}_\infty = 1\}$ is the operator norm induced by the $\infty$-norm of vectors.
Recall that $\phi_d$ denotes the $d$-dimensional standard Gaussian density.
We may use $\phi = \phi_1$ when we only discuss one-dimensional results.
For $p \in \{1, 2\}$, a measurable set $\mathcal A \subseteq \mathbb R^d$, and a measurable function $g: \mathbb R^d \to \mathbb R$, we write $\norm{g}_{L^p(\mathcal A, \phi_d)}$ as
$\left(\int_{\mathcal A} |g(x)|^p \phi_d(x) \, dx\right)^{1/p} = \left(\int_{\mathcal A} |g|^p \phi_d\right)^{1/p}$,
whenever the integral exists.
The abbreviation for $L^p(\mathbb R^d, \phi_d)$ is often $L^p(\phi_d)$ when no confusion arises.
Let $\Pi_n^d$ be the set of real polynomials of total degree $\leq n$ in $d$ variables.
We also write $\Pi_n = \Pi_n^1$ when $d=1$.
For $k \in \mathbb N_0$, we define the one-dimensional (normalized) Hermite polynomial $h_k \in \Pi_k$ by
\begin{align}
    \label{definition:hermite}
    h_k(x) := \frac{(-1)^k}{\sqrt{k!}\phi(x)}\frac{d^k}{dx^k}\phi(x).
\end{align}
For arbitrary dimensions, we define the Hermite polynomial $h_{\mathbf k} \in \Pi_{|\mathbf k|}^d$ by tensor products of one-dimensional Hermite polynomials:
\begin{align*}
    h_{\mathbf k}(x) := \prod_{j=1}^d h_{k_j}(x_j).
\end{align*}
Note that $\deg h_{\mathbf k} = |\mathbf k|$ and the collection $\left\{h_{\mathbf k}: |\mathbf k| \leq n\right\}$ forms an orthonormal basis of $\Pi_n^d$ with respect to the $L^2(\phi_d)$-norm. The dimension of $\Pi_n^d$ is given by $\binom{n+d}{n}$.
For integer or real values, we write $a \vee b := \max\{a, b\}$ and $a \wedge b := \min\{a, b\}$.
For a positive integer $N \in \mathbb N$, we write $[N] := \{1, \ldots, N\}$.
For a real number $x$, $\lceil x \rceil$ is the smallest integer no
smaller than $x$ and $\lfloor x \rfloor$ is the largest integer no larger than $x$.
For $a, b: \mathcal G \to [0, \infty)$, we write $a \lesssim b$ or $a = O(b)$ if there exists some constant $C > 0$ independent of $g$ such that $a(g) \leq Cb(g)$ holds for all $g \in \mathcal G$. We write $a \gtrsim b$ or $a = \Omega(b)$ if $b \lesssim a$.
We write $a \asymp b$ or $a = \Theta(b)$ if $a \lesssim b$ and $b \lesssim a$. 

\section{Main Results}
\label{section:main}
In this section, we present our main results. The first result bounds the $\chi^2$-divergence $\chi^2(p\|q) := \int \frac{(p-q)^2}{q}$ of Gaussian mixtures in terms of the total variation distance, which immediately implies \eqref{eq:HTV} since $H^2(p, q) \leq \chi^2(p \| q)$ holds in general.
\begin{theorem}[Inequality between TV distance and $\chi^2$-divergence]
    \label{theorem:uniformTV}
    Let $\pi$ and $\eta$ be probability measures supported on the $d$-dimensional cube $[-M, M]^d$.
    Let $\delta > 0$. Then, there exists $C_0 = C_0(\delta, M, d) > 0$, not depending on $\pi$ or $\eta$, such that
    \begin{align*}
        \sqrt{\chi^2(f_\pi \| f_\eta)}
        &\leq \left(C_0 \vee \mathrm{TV}^{-\alpha(\mathrm{TV}(f_\pi, f_\eta))}(f_\pi, f_\eta)\right)\mathrm{TV}(f_\pi, f_\eta),
    \end{align*}
    where we define
    \begin{align}
        \label{definition:alpha}
        \alpha(t) &:= \frac{2+\delta}{\log\left(\log(1/t) \vee e\right)},
    \end{align}
    for $t > 0$.
\end{theorem}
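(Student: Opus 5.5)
The plan is to expand the ratio $g := (f_\pi - f_\eta)/\phi_d$ in Hermite polynomials and compare $L^1$ and $L^2$ norms with respect to $\phi_d$. Since $\int |f_\pi - f_\eta| = \norm{g}_{L^1(\phi_d)}$, we have $\mathrm{TV} = \frac12\norm{g}_{L^1(\phi_d)}$. For $\theta \in [-M,M]^d$,
\begin{align*}
\phi_d(x-\theta)/\phi_d(x) = e^{\langle x,\theta\rangle - \|\theta\|^2/2} = \sum_{\mathbf k} \frac{\theta^{\mathbf k}}{\sqrt{\mathbf k!}} h_{\mathbf k}(x).
\end{align*}
Hence $g = \sum_{\mathbf k} c_{\mathbf k} h_{\mathbf k}$ with $c_{\mathbf k} = \int \theta^{\mathbf k}/\sqrt{\mathbf k!}\, d(\pi-\eta)$, so $|c_{\mathbf k}| \le 2M^{|\mathbf k|}/\sqrt{\mathbf k!}$, which decays superexponentially. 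The denominator of the $\chi^2$-divergence is handled by the lower bound $f_\eta(x) \ge \phi_d(x)e^{-M\|x\|_1 - dM^2/2}$. I would split $\chi^2$ into the region $\|x\|_\infty \le R$ and its complement. On the outer region, the Gaussian tails of both mixtures beat this lower bound, giving a contribution of roughly $e^{-cR^2}$. On the inner region the weight $e^{M\|x\|_1}$ is at most $e^{dMR}$, so $\chi^2 \lesssim e^{dMR}\norm{g}^2_{L^2(\phi_d)} + e^{-cR^2}$. Truncating at degree $n$, write $g = p_n + r_n$ with $p_n \in \Pi_n^d$ and $\norm{r_n}_{L^2(\phi_d)}^2 \lesssim \sum_{|\mathbf k|>n} M^{2|\mathbf k|}/\mathbf k! \le (C/n)^{n}$.

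The core step is to bound $\norm{p_n}_{L^2(\phi_d)}$ by $\norm{p_n}_{L^1(\phi_d)}$, and I would do this with the two tools the introduction announces. Proposition~\ref{proposition:nikolskii}, the multivariate Nikolskii-type inequality, should give something like $\norm{p}_{L^2(\phi_d)} \le e^{Cn}\norm{p}_{L^1(\phi_d)}$ for $p \in \Pi_n^d$, or at least a version restricted to a cube of side about $\sqrt n$. Proposition~\ref{proposition:restricted}, the restricted-range inequality, lets us discard the region $\|x\|_\infty \gtrsim \sqrt{n}$ at a cost of only a constant factor. These combine as
\begin{align*}
\norm{p_n}_{L^2(\phi_d)} \le e^{Cn}\norm{p_n}_{L^1(\phi_d)} \le e^{Cn}\bigl(2\mathrm{TV} + \norm{r_n}_{L^2(\phi_d)}\bigr),
\end{align*}
so that
\begin{align*}
\norm{g}_{L^2(\phi_d)} \lesssim e^{Cn}\,\mathrm{TV} + e^{Cn}(C/n)^{n/2}.
\end{align*}

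Next I optimize the parameters, writing $T = \mathrm{TV}$. I want the tail term $(C'/n)^{n/2} \le T$, which requires $n \log n \gtrsim 2\log(1/T)$, that is, $n \approx 2\log(1/T)/\log\log(1/T)$. Then $e^{Cn} = T^{-2C/\log\log(1/T)}$. Together with the factor $e^{dMR/2}$ and the choice $R \approx \sqrt{\log(1/T)}$, which makes $e^{-cR^2} \le T^2$, the overall factor is $T^{-O(1)/\log\log(1/T)}$. For small $T$ this is dominated by $T^{-\alpha(T)}$. For $T$ bounded below, the constant $C_0$ absorbs everything, since $\chi^2$ is uniformly bounded for compactly supported mixing measures.

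The main obstacle is getting the exact exponent constant $2+\delta$. That requires the Nikolskii-plus-restricted-range constant to be essentially $e^{n(1+o(1))}$ per degree, together with careful accounting of the $n\log n$ versus $\log(1/T)$ balance, where the leading-order $\frac12 n\log n$ from $\mathbf k!$ produces the factor $2$. My first attempt would be to prove the one-dimensional Nikolskii bound $\norm{p}_{L^2(\phi)} \le (1+\delta)^{n}\cdot\mathrm{poly}(n)\norm{p}_{L^1(\phi)}$ via Mhaskar--Rakhmanov--Saff restricted-range theory, then tensorize to $d$ dimensions, accepting polynomial-in-$n$ losses, which are harmless. If the constant turns out to be $e^{cn}$ with $c>0$, the estimate still yields the $\Theta(1)/\log\log$ rate, but with a worse constant than $2+\delta$.
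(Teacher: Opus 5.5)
Your reduction of $\chi^2$ to $\norm{g}_{L^2(\phi_d)}$ is valid. You lower-bound $f_\eta$ by $\phi_d e^{-M\norm{x}_1-dM^2/2}$ and split at $R\asymp\sqrt{\log(1/T)}$; this costs only $e^{O(\sqrt{\log(1/T)})}$, which is $o\bigl(T^{-\epsilon/\log\log(1/T)}\bigr)$ for every $\epsilon>0$ and is absorbed by slack in $\delta$. The paper does this differently, and losslessly: it translates the measures by each $\theta\in[-M,M]^d$ onto $[-2M,2M]^d$ and applies Jensen's inequality $(f_\pi-f_\eta)^2/f_\eta\le\int(f_\pi-f_\eta)^2/\phi_d(\cdot-\theta)\,d\eta(\theta)$.

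The genuine gap is the core step, which carries all the content of the constant $2+\delta$. You need $\norm{P}_{L^2(\phi_d)}\le e^{\kappa_1 n}\norm{P}_{L^1(\phi_d)}$ for all $P\in\Pi_n^d$, for every $\kappa_1>1$ and all $n\ge A_1(\kappa_1)d$, with $\kappa_1$ independent of $d$ (the exponent $\alpha$ in the statement is dimension-free). Then $n=\lfloor 2\kappa_2\log(1/T)/\log\log(1/T)\rfloor$ with $2\kappa_1\kappa_2<2+\delta$ closes the argument. Your proposal leaves this constant open and explicitly allows that it may come out worse, in which case you have proved a strictly weaker statement than the theorem.

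The routes you sketch would not deliver this bound.
\begin{itemize}
\item The one-dimensional target $\norm{p}_{L^2(\phi)}\le(1+\delta)^n\mathrm{poly}(n)\norm{p}_{L^1(\phi)}$ is false whenever $1+\delta<\sqrt2$. For $p(x)=x^n$ the ratio is $\asymp 2^{n/2}$, which is exactly the paper's sharpness example. The achievable target is $e^{(1+\delta)n}$.
\item Tensorizing a one-dimensional constant $C_n$ coordinate by coordinate (via Minkowski's integral inequality) gives $C_n^d$, because each variable may carry degree $n$. The loss is therefore $e^{(d-1)(1+\delta)n}$, not polynomial.
\item Restricting to a cube $\norm{x}_\infty\le c\sqrt n$ costs $\sup\phi_d^{-1/2}=(2\pi)^{d/4}e^{dc^2n/4}$.
\end{itemize}
Either of the last two makes the exponent roughly $2d$ rather than $2$. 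Even for $d=1$, a radius $c\sqrt n$ costs $e^{c^2n/4}$, so you need radius $2\sqrt{\kappa n}$ with $\kappa\downarrow 1$.

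The missing ingredient is a genuinely multivariate restricted-range inequality on the Euclidean ball: $\int P^2\phi_d\le 2\int_{\norm{x}_2\le\sqrt{2\kappa(2n+d)}}P^2\phi_d$ for every $\kappa>1$, once $2n+d\ge A(\kappa)d$. The paper proves it by bounding the operator norm of the positive semidefinite matrix $\bigl(\int_{\norm{x}_2>\sqrt{2\kappa(2n+d)}}h_{\mathbf k}h_{\mathbf l}\phi_d\bigr)_{|\mathbf k|,|\mathbf l|\le n}$ by its trace $\int_{\norm{x}_2>\sqrt{2\kappa(2n+d)}}K_n(x,x)\phi_d(x)\,dx$, which it estimates via Mehler's formula and a Chernoff argument. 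The paper combines this with the sup-norm Nikolskii bound $\sup_x|P(x)\phi_d^{1/2}(x)|\le(2\pi)^{-d/4}C_{n,d}^{1/2}\norm{P}_{L^2(\phi_d)}$, where $C_{n,d}=O(n^{d/2})$. Note that this is a sup-versus-$L^2$ inequality, not the $L^2$-versus-$L^1$ one you attribute to it. Together they give $c_{n,d}\ge\frac12 C_{n,d}^{-1/2}e^{-\kappa(2n+d)/2}\ge 3e^{-\kappa_1 n}$, with all $d$-dependence confined to the constant.
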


\begin{remark}
    Note that $\alpha(t)$ is increasing in $t$ and that $\alpha(t) \to 0$ as $t \downarrow 0$.
    However, $t^{-\alpha(t)}$ is decreasing in $t$ and $t^{-\alpha(t)} \to +\infty$ as $t \downarrow 0$.
\end{remark}

\begin{remark}
    We note that the exponent $\alpha(t)$ does not depend on $M$ or $d$.
    The dependence on $M$ and $d$ appears solely in the constant $C_0$.
    We defer the detailed discussion to Appendix~\ref{appendix:dependency}. In summary, $\log C_0$ exhibits a polynomial dependence on $M^2 d$, and becomes nearly linear when $\delta$ is large.
\end{remark}

\begin{corollary}[Inequality between TV and Hellinger distances]
    \label{corollary:uniformTV}
    Let $\pi$ and $\eta$ be probability measures supported on the $d$-dimensional cube $[-M, M]^d$.
    Let $\delta > 0$. Then, there exists $C_0 = C_0(\delta, M, d) > 0$, not depending on $\pi$ or $\eta$, such that
    \begin{align*}
        H(f_\pi, f_\eta)
        &\leq \left(C_0 \vee \mathrm{TV}^{-\alpha(\mathrm{TV}(f_\pi, f_\eta))}(f_\pi, f_\eta)\right)\mathrm{TV}(f_\pi, f_\eta),
    \end{align*}
    where we define $\alpha(\cdot)$ as in \eqref{definition:alpha}.
    \begin{proof}
        This is a direct consequence of Theorem~\ref{theorem:uniformTV} because $H^2(p, q) \leq \chi^2(p \| q)$ holds in general.
    \end{proof}
\end{corollary}
A key step in establishing Theorem~\ref{theorem:uniformTV} and Corollary~\ref{corollary:uniformTV} is to relate the $L^1(\phi_d)$ and $L^2(\phi_d)$ norms of the ratio
\begin{align*}
    g := \frac{f_\pi-f_\eta}{\phi_d}.
\end{align*}
Indeed, the $L^1(\phi_d)$-norm of $g$ is exactly twice the total variation distance, while both the squared Hellinger distance and the $\chi^2$-divergence are closely related to the squared $L^2(\phi_d)$-norm.

A natural analysis is through a basis expansion in $L^2(\phi_d)$. In particular, the Hermite polynomial expansion of $g$ plays a central role, since its coefficients are precisely the moment differences between $\pi$ and $\eta$ (see Lemma~\ref{lemma:expansion}). Moreover, in the one-dimensional setting ($d=1$), inequalities relating the $L^1(\phi_d)$ and $L^2(\phi_d)$ norms on finite-dimensional subspaces—such as Nikolskii-type and restricted-range inequalities—have been extensively studied \citep{lubinsky2007survey}.

\begin{theorem}[Inequality between $L^1(\phi_d)$ and $L^2(\phi_d)$ norms]
    \label{theorem:uniformnorm}
    Let $\pi$ and $\eta$ be probability measures supported on the $d$-dimensional cube $[-2M, 2M]^d$. Define $g := \frac{f_\pi - f_\eta}{\phi_d}$ and
    suppose $\delta > 0$. Then, there exists $C_0 = C_0(\delta, M, d) > 0$, not depending on $\pi$ or $\eta$, such that
    \begin{align*}
        \norm{g}_{L^2(\phi_d)}
        &\leq \left(C_0 \vee \mathrm{TV}^{-\alpha(\mathrm{TV}(f_\pi, f_\eta))}(f_\pi, f_\eta)\right)\mathrm{TV}(f_\pi, f_\eta),
    \end{align*}
    where we define $\alpha(\cdot)$ as in \eqref{definition:alpha}.
    \begin{proof}
        Here we give a sketch of the proof for the one-dimensional setting with $d=1$. We provide the full proof for general $d$ in Appendix~\ref{appendix:main}.

        Recall the definition \eqref{definition:hermite} of the (one-dimensional) Hermite polynomials, and consider the following Hermite polynomial expansion (see Lemma~\ref{lemma:expansion}) of $g$.
        \begin{align*}
            g(x) &= \int_{\mathbb R} \frac{\phi_1(x-\theta)}{\phi_1(x)} \, d(\pi-\eta)(\theta)
            \\ &= \int_{\mathbb R} \sum_{k=0}^\infty \frac{\theta^k}{\sqrt{k!}}h_k(x) \, d(\pi-\eta)(\theta) \tag{by Lemma~\ref{lemma:expansion}}
            \\ &= \sum_{k=0}^\infty \frac{\Delta_k}{\sqrt{k!}}h_k(x),
        \end{align*}
        where $\Delta_k := \int_{\mathbb R} \theta^k \, d(\pi - \eta)(\theta)$.
        We decompose $g = q + r$, where
        \begin{align*}
            q &= \sum_{k=0}^n \frac{\Delta_k}{\sqrt{k!}}h_k, &
            r &= \sum_{k=n+1}^\infty \frac{\Delta_k}{\sqrt{k!}}h_k,
        \end{align*}
        and $n$ is an integer to be determined later. That is, $q$ is the $L^2(\phi_1)$ projection of $g$ onto a finite-dimensional subspace. To control the $L^1(\phi_1)$-norm of $q \in \Pi_n$, we define
        \begin{align}
        \label{definition:cn}
            c_n := \inf \left\{\norm{P}_{L^1(\phi_1)}: P \in \Pi_n, \norm{P}_{L^2(\phi_1)} = 1 \right\}.
        \end{align}
        Note first that $c_n \leq 1$ holds by the Cauchy-Schwarz inequality. For $P \in \Pi_n$, the Nikolskii-type inequality \citep{nevai1987sharp} states that
        \begin{align}
        \label{proof:nikolskii-one}
            \sup_{x \in \mathbb R}\left|P(x)\phi_1^{1/2}(x)\right| \lesssim n^{1/4}\norm{P}_{L^2(\phi_1)}.
        \end{align}
        We next show that $c_n \geq cn^{-1/4}e^{-n}$ holds for some universal constant $c > 0$:
        \begin{align*}
            \norm{P}_{L^2(\phi_1)}^2 &= \int_{-\infty}^\infty P^2\phi_1
            \\ &\leq 2\int_{-2\sqrt{n+1}}^{2\sqrt{n+1}} P^2\phi_1
            \tag{Restricted-range inequality}
            \\ &\leq 2 \sup_{|x| \leq 2\sqrt{n+1}}\left|\phi_1^{-1/2}(x)\right|
            \sup_{x \in \mathbb R} \left|P(x)\phi_1^{1/2}(x)\right|
            \int_{-\infty}^\infty |P\phi_1|
            \\ &\lesssim e^n \cdot n^{1/4} \norm{P}_{L^2(\phi_1)}
            \cdot \norm{P}_{L^1(\phi_1)}.
            \tag{by \eqref{proof:nikolskii-one}}
        \end{align*}
        The restricted-range inequality used above follows from Theorem 6.2(b) of \citet{lubinsky2007survey} with $W = \phi_1^{1/2}$ being a Freud-type weight function.
        
        In addition to $c_n$, another technical ingredient is to control the tail norm $\norm{r}_{L^2(\phi_1)}$. Compact support implies $|\Delta_k| \leq 2(2M)^k$ and
        \begin{align*}
            \norm{r}_{L^2(\phi_1)}
            &\leq \left(\sum_{k=n+1}^\infty \frac{4(4M^2)^k}{k!} \right)^{1/2}
            \leq \left(\frac{C}{n+1}\right)^{(n+1)/2},
        \end{align*}
        where $C$ is a positive constant depending solely on $M$.
        
        Now we bound from below $\norm{g}_{L^1(\phi_1)}$ as follows.
        \begin{align*}
            \norm{g}_{L^1(\phi_1)} &\geq \norm{q}_{L^1(\phi_1)} - \norm{r}_{L^1(\phi_1)}
            \\ &\geq c_n \norm{q}_{L^2(\phi_1)} - \norm{r}_{L^2(\phi_1)} \tag{by \eqref{definition:cn}}
            \\ &\geq c_n \norm{g}_{L^2(\phi_1)} - 2\norm{r}_{L^2(\phi_1)},
        \end{align*}
        where the last inequality uses $c_n \leq 1$ and the decomposition $g = q + r$. Together with the lower bound on $c_n$ and the upper bound on $\norm{r}_{L^2(\phi_1)}$, we obtain
        \begin{align*}
            2t \geq \sup_{n \geq 1} \left\{
            cn^{-1/4}e^{-n} \norm{g}_{L^2(\phi_1)} - 2\left(\frac{C}{n+1}\right)^{(n+1)/2}
            \right\},
        \end{align*}
        where $t = \frac{1}{2}\norm{g}_{L^1(\phi_1)} = \mathrm{TV}(f_\pi, f_\eta)$.
        Finally, we choose
        \begin{align*}
            n \approx \frac{2\log(1/t)}{\log\log(1/t)}
        \end{align*}
        to conclude the proof. Later, in Appendix~\ref{appendix:mainprelim}, we present multidimensional extensions of the Nikolskii-type and restricted-range inequalities (Propositions~\ref{proposition:nikolskii} and \ref{proposition:restricted}). Building on these results, we provide the full proof of the theorem in Appendix~\ref{appendix:mainproof}.
    \end{proof}
\end{theorem}

\begin{proof}[Proof of Theorem~\ref{theorem:uniformTV}]
    Here we show that Theorem~\ref{theorem:uniformTV} follows from Theorem~\ref{theorem:uniformnorm} and that the constants $C_0$ in both theorems coincide.
    Fix $\theta \in [-M, M]^d$.
    Consider the translation map $\tau_\theta(x) = x - \theta$ and define the following push-forward measures:
    \begin{align*}
        \pi_\theta &:= (\tau_\theta)_\sharp\pi, &
        \eta_\theta &:= (\tau_\theta)_\sharp\eta.
    \end{align*}
    Note that these are simply translations of the original measures and are supported on $[-2M, 2M]^d$.
    Define $g_\theta := \frac{f_{\pi_\theta}-f_{\eta_\theta}}{\phi_d}$. Then,
    \begin{align*}
        \norm{g_\theta}^2_{L^2(\phi_d)} &= \int_{\mathbb R^d} \frac{(f_\pi(x+\theta)-f_\eta(x+\theta))^2}{\phi_d(x)} \, dx
        \\ &= \int_{\mathbb R^d} \frac{(f_\pi(x)-f_\eta(x))^2}{\phi_d(x-\theta)} \, dx, \\
        \norm{g_\theta}_{L^1(\phi_d)} &= \int_{\mathbb R^d} |f_\pi(x+\theta)-f_\eta(x+\theta)| \, dx
        \\ &= \int_{\mathbb R^d} |f_\pi(x)-f_\eta(x)| \, dx = 2 \, \mathrm{TV}(f_\pi, f_\eta).
    \end{align*}
    Since $g_\theta$ obeys the inequality in Theorem~\ref{theorem:uniformnorm},
    there exists $C_0 = C_0(\delta, M, d) > 0$, not depending on $\pi$, $\eta$, or $\theta$, such that
    \begin{align*}
        &\left(\int \frac{(f_\pi(x)-f_\eta(x))^2}{\phi_d(x-\theta)} \, dx\right)^{1/2}
        \\ &\leq \left(C_0 \vee \mathrm{TV}^{-\alpha(\mathrm{TV}(f_\pi, f_\eta))}(f_\pi, f_\eta)\right)\mathrm{TV}(f_\pi, f_\eta).
    \end{align*}
    Meanwhile, we can apply Jensen's inequality pointwise in $x$ to get
    \begin{align*}
        \frac{(f_\pi(x)-f_\eta(x))^2}{f_\eta(x)} \leq \int \frac{(f_\pi(x)-f_\eta(x))^2}{\phi_d(x-\theta)}\, d\eta(\theta).
    \end{align*}
    Integrate both sides in $x$. Then, use Fubini-Tonelli (nonnegativity) and the fact that a mixture integral is upper bounded by the supremum of its integrand to show that
    \begin{align*}
        \chi^2(f_\pi \| f_\eta)
        \leq \sup_{\theta \in [-M, M]^d}
        \int \frac{(f_\pi(x)-f_\eta(x))^2}{\phi_d(x-\theta)} \, dx,
    \end{align*}
    thus concluding the proof.
\end{proof}

\section{Sharpness}
\label{section:sharpness}
This section establishes the sharpness of the inequalities in the preceding section. Concretely, Theorem~\ref{theorem:sharp} demonstrates the sharpness of Corollary~\ref{corollary:uniformTV}; consequently, the sharpness of Theorem~\ref{theorem:uniformTV} follows immediately from the inequality $H^2(p, q) \leq \chi^2(p \| q)$.
We prove, by construction, that the exponent $\alpha(\cdot)$ is necessary up to a universal constant. Since the construction is one-dimensional, we write $\phi = \phi_1$ throughout this section for simplicity of notation.

\begin{theorem}[Sharpness of Corollary~\ref{corollary:uniformTV}]
    \label{theorem:sharp}
    There exist two sequences of probability measures $\{\pi_n\}$ and $\{\eta_n\}$ supported on $[-M, M]$ such that, if we define
    \begin{align*}
        \mathrm{TV}_n &:= \mathrm{TV}(f_{\pi_n}, f_{\eta_n}), &
        H_n &:= H(f_{\pi_n}, f_{\eta_n}),
    \end{align*}
    then $\mathrm{TV}_n \downarrow 0$ as $n \to \infty$, and moreover it holds for all $n$ that $\mathrm{TV}_n < e^{-e}$ and that
    \begin{align*}
        H_n \geq \mathrm{TV}_n^{1-\alpha^*(\mathrm{TV}_n)},
    \end{align*}
    where we define
    \begin{align*}
        \alpha^*(t) &:= \frac{0.33}{\log\log(1/t)}, &
        t &>0.
    \end{align*}
\end{theorem}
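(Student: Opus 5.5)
We need a construction where the Hellinger distance is much larger than the TV distance. The upper-bound proof suggests the extremal mechanism: $g=(f_\pi-f_\eta)/\phi$ should be dominated by a single high-degree Hermite component whose $L^1(\phi)$-norm is much smaller than its $L^2(\phi)$-norm. Such a component has its mass concentrated in the far tails, near $|x|\approx 2\sqrt n$. So the plan is to choose $\pi_n,\eta_n$ that match the first $n-1$ moments, so that $\Delta_k=0$ for $k<n$, and whose first nonzero moment difference $\Delta_n$ is as large as compactness allows. We can then compare the $L^1$ and $L^2$ behaviour of $g$ directly.

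\textbf{Step 1: the construction.} Take a signed measure $\nu=\pi_n-\eta_n$ on $[-M,M]$ orthogonal to $\Pi_{n-1}$, for example the Chebyshev-node construction: $\nu=c\sum_j (-1)^j w_j\delta_{x_j}$ with $x_j=M\cos(j\pi/n)$ and weights given by the Chebyshev quadrature. This gives $|\Delta_n|\asymp (M/2)^n$ and $\|\nu\|_{TV}\le 2$. Equivalently, $\Delta_k=\int T_n(\theta/M)\,d\mu$-type moments. Then
\[
g(x)=\int e^{\theta x-\theta^2/2}\,d\nu(\theta).
\]
Because of the alternating signs, $g$ is an $n$-th finite difference of $e^{\theta x-\theta^2/2}$ in $\theta$. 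For $|x|\lesssim n$ it is of size about $(Mx/n)^n/n!\cdot e^{O(1)}$ times explicit factors, and for large $|x|$ it grows like $e^{M|x|}$.

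\textbf{Step 2: upper bound on TV.} Write
\[
2\,\mathrm{TV}_n=\int|g|\phi=\int\Big|\int\phi(x-\theta)\,d\nu(\theta)\Big|\,dx .
\]
By the Taylor/Hermite expansion, $\int\phi(x-\theta)\,d\nu(\theta)=\sum_{k\ge n}\frac{\Delta_k}{k!}\phi^{(k)}(x)(-1)^k$. The Fourier transform is $\hat\phi(\omega)\hat\nu(\omega)$, and $\hat\nu(\omega)=O((M|\omega|)^n/n!\cdot C^n)$ by moment matching. Using an $L^1$ bound via Sobolev-type control ($\|h\|_1\lesssim\|(1+x^2)h\|_2$ together with Plancherel), we get $\mathrm{TV}_n\le (C M^2/n)^{n/2}$, up to $e^{O(n)}$ factors that we need to track only loosely, say $\mathrm{TV}_n\le (C/n)^{n/2}$.

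\textbf{Step 3: lower bound on Hellinger.} Use
\[
H^2\ge \frac{1}{2}\int_A\frac{(f_\pi-f_\eta)^2}{(\sqrt{f_\pi}+\sqrt{f_\eta})^2}\ge\frac18\int_A\frac{(f_\pi-f_\eta)^2}{\max(f_\pi,f_\eta)} .
\]
On a region $A=\{x: x\approx a\sqrt{n}\}$ in the tail, or at $x\asymp n$ where the difference peaks relative to $f$, we have $f_\pi,f_\eta\le\phi(x)e^{M|x|}$. This gives $H^2\gtrsim\int_A g^2\phi\, e^{-M|x|}$. In the region $|x|\approx \beta\log(1/\mathrm{TV})$-scale, $g^2\phi$ is of order $(x^2/n)^n e^{-x^2/2}/n!$ in magnitude, which is maximized near $x^2\approx 2n$ with value roughly $(Ce)^n/n^n\cdot n^{n}/n!\approx e^{O(n)}/\sqrt{n}$ times $(M^2/4)^n$. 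Comparing with Step 2 yields $H_n^2\ge \mathrm{TV}_n\cdot n^{n/2}e^{-O(n)}\cdot(\ldots)$, that is, $H_n\ge \mathrm{TV}_n\cdot n^{cn}e^{-O(n)}$. Since $\log(1/\mathrm{TV}_n)\asymp n\log n$, we have $n^{cn}=\mathrm{TV}_n^{-c'/\log\log(1/\mathrm{TV}_n)}$, which gives the claimed exponent once the constant $0.33$ is verified. The $e^{O(n)}$ losses are absorbed because they are $\mathrm{TV}^{-O(1/\log\log)}$-sized only with a smaller constant; we then choose $M$ and the region carefully.

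\textbf{Main obstacle.} The delicate step is the pointwise lower bound on $|g|$ in the tail region. We must rule out cancellation among the alternating atoms there, and this is needed with explicit constants sharp enough to deliver $0.33$. I would first try the Chebyshev-node finite-difference representation: $g(x)=e^{-\theta^2/2}\cdot n$-th divided difference of $e^{\theta x}$, which equals $\frac{x^n}{n!}e^{\xi x-\xi^2/2}$ times the leading coefficient. This mean-value form gives a cancellation-free lower bound. The final check is that $\mathrm{TV}_n<e^{-e}$ for all $n$, which we ensure by starting the sequence at a large $n$.
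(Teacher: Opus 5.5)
\textbf{There is a genuine gap, and it is in the construction of Step~1, not only in the estimates.} Matching the first $n-1$ moments makes $g=\sum_{k\ge n}\frac{\Delta_k}{\sqrt{k!}}h_k$. Its leading part is a multiple of the single Hermite polynomial $h_n$.

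\begin{itemize}
\item $h_n$ is \emph{not} concentrated in the far tails. The mass of $h_n^2\phi$ is spread over $[-2\sqrt n,2\sqrt n]$ with an arcsine-type profile. Also $\|h_n\|_{L^1(\phi)}\asymp n^{-1/4}$ while $\|h_n\|_{L^2(\phi)}=1$. So this mechanism gives only a polynomial $L^1/L^2$ gap.
\item The error is hidden in Step~3, where you take $|g|\approx|\Delta_n|\,|x|^n/n!$ near $x^2\approx 2n$. With $\Delta_k=0$ for $k<n$, one has $g\approx\frac{\Delta_n}{n!}\mathrm{He}_n(x)$ there, where $\mathrm{He}_n=\sqrt{n!}\,h_n$. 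For $|x|\le\sqrt{2n}$, $|\mathrm{He}_n(x)|\lesssim\sqrt{n!}\,n^{-1/4}e^{x^2/4}$. At $x=\sqrt{2n}$ this is smaller than $x^n$ by roughly $2^{-n/2}$, which is precisely the exponential factor you hope to harvest.
\item The same mistake is in your mean-value formula. In fact $\partial_\theta^n e^{\theta x-\theta^2/2}=\mathrm{He}_n(x-\theta)\,e^{\theta x-\theta^2/2}$, not $x^n e^{\theta x-\theta^2/2}$.
\item Writing the divided difference with its B-spline (Peano) kernel gives $2\,\mathrm{TV}_n\le\frac{|\Delta_n|}{\sqrt{n!}}\|h_n\|_{L^1(\phi)}$. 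For the Chebyshev-extrema choice one can check that $H_n/\mathrm{TV}_n=e^{O(\sqrt n)}$, far below what is needed.
\end{itemize}

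\textbf{The exponent bookkeeping is also off.} Since $\log(1/\mathrm{TV}_n)\sim\frac12 n\log n$, the target $H_n\ge\mathrm{TV}_n^{1-\alpha^*(\mathrm{TV}_n)}$ asks for $H_n/\mathrm{TV}_n\ge e^{(0.165+o(1))n}$. That is an $e^{cn}$ gain, not $n^{cn}$.
\begin{itemize}
\item A gain of $n^{cn}$ would contradict Corollary~\ref{corollary:uniformTV}.
\item Your intermediate bound $H_n^2\ge\mathrm{TV}_n n^{n/2}e^{-O(n)}$ contradicts $H^2\le\mathrm{TV}$.
\item Because the target is itself $e^{\Theta(n)}$, the $e^{O(n)}$ losses in Steps~2--3 cannot be ``absorbed.'' Tracking them with explicit constants is the whole problem.
\end{itemize}

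\textbf{The missing idea is to \emph{not} match the lower moments.} The paper places the atoms on the scaled zeros of $T_{n+1}$ and prescribes nonzero $\Delta_k\propto 1/(n-k)!!$ for odd $k\le n$. This makes the projection $q_n$ of $g$ onto $\Pi_n$ exactly a multiple of the monomial $x^n$, for which $\|x^n\|_{L^1(\phi)}/\|x^n\|_{L^2(\phi)}\approx 2^{-n/2}$.
\begin{itemize}
\item Gautschi's bound on the inverse Vandermonde matrix keeps the weights admissible.
\item The recursion from $T_{n+1}(\theta_j)=0$ controls all higher moments. This gives $\|r_n\|_{L^2(\phi)}\le\frac1{32}e^{n/5.53}\|q_n\|_{L^1(\phi)}$.
\item The resulting net gain is $e^{(\log 2/2-1/5.53)n}$. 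This is what yields the constant $2(\log2/2-1/5.53)\approx0.3315>0.33$.
\end{itemize}
Your Hellinger lower bound via $\max(f_\pi,f_\eta)\le\phi e^{M|x|}$ on $|x|\lesssim\sqrt n$ would be a workable substitute for the paper's $\delta_0$-mixing and $\tfrac14$-mixture steps, since it costs only $e^{O(\sqrt n)}$. It cannot, however, rescue the moment-matching construction.
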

To prove Theorem~\ref{theorem:sharp}, we first construct three pairs of sequences of mixing distributions: $(\pi_n^{(0)}, \eta_n^{(0)})$, $(\pi_n^{(1)}, \eta_n^{(1)})$, and $(\pi_n^{(2)}, \eta_n^{(2)})$.
\begin{align*}
    \overset{\textrm{Lemma~\ref{lemma:construction}}}{\begin{bmatrix}
        \pi_n^{(0)} \\ \eta_n^{(0)}
    \end{bmatrix}} \overset{\eqref{definition:pi1}}{\mapsto}
    \overset{\textrm{Corollary~\ref{corollary:construction}}}{\begin{bmatrix}
        \pi_n^{(1)} \\ \eta_n^{(1)}
    \end{bmatrix}} \overset{\eqref{definition:pi2}}{\mapsto}
    \overset{\textrm{Corollary~\ref{corollary:sharpness}}}{\begin{bmatrix}
        \pi_n^{(2)} \\ \eta_n^{(2)}
    \end{bmatrix}} \overset{\eqref{definition:pifinal}}{\mapsto}
    \overset{\textrm{Theorem~\ref{theorem:sharp}}}{\begin{bmatrix}
        \pi_n \\ \eta_n
    \end{bmatrix}}
\end{align*}
First, the pair $(\pi_n^{(0)}, \eta_n^{(0)})$, constructed in Lemma~\ref{lemma:construction}, provides a sharp example of the inequality between the $L^1(\phi)$ and $L^2(\phi)$ norms (Theorem~\ref{theorem:uniformnorm}). Corollary~\ref{corollary:construction} then modifies this construction to obtain $(\pi_n^{(1)}, \eta_n^{(1)})$, for which a lower bound on the $\chi^2$-divergence is available. Next, Corollary~\ref{corollary:sharpness} further transforms $(\pi_n^{(1)}, \eta_n^{(1)})$ into $(\pi_n^{(2)}, \eta_n^{(2)})$, yielding a lower bound on the Hellinger distance.
Combining these results, we complete the proof of Theorem~\ref{theorem:sharp} at the end of this section.

Before constructing the sharp example $(\pi_n^{(0)}, \eta_n^{(0)})$ of Theorem~\ref{theorem:uniformnorm}, we recall the key ingredients of its proof: (1) the quantity $c_n$, defined in \eqref{definition:cn}, admits a lower bound of the form $e^{-O(n)}$; and (2) the tail norm $\norm{r}_{L^2(\phi)}$ can be controlled by $e^{-\Omega(n\log n)}$ via differences in higher-order moments of the mixing distributions.

We also note that the monomials $(x^n)_n$ provide a sharp instance for $c_n$, since the norm ratio $\norm{x^n}_{L^1(\phi)}/\norm{x^n}_{L^2(\phi)}$ decays exponentially in $n$. Motivated by this, for a given $n$, we construct an example such that the $L^2(\phi)$ projection of $(f_{\pi_n}-f_{\eta_n})/\phi$ onto $\Pi_n$ is proportional to $x^n$. To this end, we first choose $(n+1)$ points in $[-M, M]$ as the support of the mixing distributions, denoted by $\theta_0, \ldots, \theta_n$, and then match the lower-order moments $\Delta_0, \ldots, \Delta_n$ so that
\begin{align*}
    \sum_{k=0}^n \frac{\Delta_k}{\sqrt{k!}}h_k \propto x^n.
\end{align*}

Given the values of $\theta_0, \ldots, \theta_n$, the differences in the lower-order moments $\Delta_0, \ldots, \Delta_n$ can be determined by solving a linear system involving the inversion of a Vandermonde matrix (see Lemma~\ref{lemma:chebyshev} for its definition). We choose $\theta_0, \ldots, \theta_n$ to be the zeros of the $(n+1)$-th Chebyshev polynomial of the first kind (i.e., Chebyshev nodes), since they lie in $[-1, 1]$ and yield a well-conditioned Vandermonde matrix \citep{gautschi1974norm}. The $(n+1)$-th Chebyshev polynomial of the first kind, denoted by $T_{n+1} \in \Pi_{n+1}$, is defined by
\begin{align}
    \label{definition:chebyshev}
    T_{n+1}(\cos(\theta)) = \cos((n+1)\theta).
\end{align}

In addition to the boundedness of the nodes and the stability of the associated inverse Vandermonde matrix, another advantage of using Chebyshev nodes is that they enable recursive control of higher-order moments in terms of lower-order ones. The properties of this construction are summarized in Lemma~\ref{lemma:construction}, whose full proof is given in Appendix~\ref{appendix:sharp}.

\begin{lemma}[Sharp example of Theorem~\ref{theorem:uniformnorm}]
    \label{lemma:construction}
    Let $n \geq 11$ be an odd integer and
    $\theta_j = \cos \left(\frac{2j+1}{2n+2}\pi\right), j = 0, \ldots, n$ be the zeros of Chebyshev polynomial of the first kind, $T_{n+1}(x)$.
    Given $M > 0$,
    define $a = 1 \wedge M$ and
    \begin{align*}
        \Delta_k &= \begin{cases}
        \displaystyle{\frac{\left\{a(\sqrt{2}-1)\right\}^{n+1}}{(n-k)!!}}, & k \textrm{ is odd}, \\
        0, & k \textrm{ is even},
        \end{cases}
    \end{align*}
    for $k = 0, 1, \ldots, n$, where $(n-k)!!$ is the double factorial. Define $(w_0, \ldots, w_n) \in \mathbb R^{n+1}$ to be the unique vector solving
    \begin{align*}
        \Delta_k &= \sum_{j=0}^n w_j (a\theta_j)^k, &
        k &= 0, 1, \ldots, n.
    \end{align*}
    Accordingly, define two discrete probability measures
    \begin{align}
        \pi_n^{(0)} &:= \sum_{j=0}^n \left(\frac{1}{n+1}+w_j\right)\delta_{a\theta_j}, &
        \eta_n^{(0)} &:= \sum_{j=0}^n \frac{1}{n+1}\delta_{a\theta_j},
        \label{definition:pi0}
    \end{align}
    where $\delta_{a\theta_j}$ denotes the point mass at $a\theta_j$.
    Then,
    \begin{enumerate}
        \item $w_j$ is well-defined and $|w_j| \leq \frac{1}{n+1}$ for all $j$.
        \item $\pi_n^{(0)}$ and $\eta_n^{(0)}$ are valid discrete probability measures supported on $[-M, M]$.
        \item For $0 \leq k \leq n$, $\Delta_k = \int \theta^k d(\pi_n^{(0)}-\eta_n^{(0)})(\theta)$ satisfies
        \begin{align}
        \label{proof:induction}
            |\Delta_k| \leq \left\{a(\sqrt{2}-1)\right\}^{n+1}\exp\left(\frac{n}{5.54}\right)b^{k-n},
        \end{align}
        where $b := a\sqrt{\frac{n}{2.77}}$.
        \item If we further define $\Delta_k := \int \theta^k d(\pi_n^{(0)}-\eta_n^{(0)})(\theta)$ for $k > n$, then \eqref{proof:induction} is also true.
        \item If we write $q_n(x) = \sum_{k=0}^n \frac{\Delta_k}{\sqrt{k!}}h_k(x)$
        and $r_n(x) = \sum_{k=n+1}^\infty \frac{\Delta_k}{\sqrt{k!}}h_k(x)$, then
        \begin{align*}
            q_n(x) = \left\{a(\sqrt{2}-1)\right\}^{n+1} \frac{x^n}{n!}.
        \end{align*}
        In addition, there exists a universal $N_0 \in \mathbb N$ such that it holds for all $n \geq N_0$ that
        \begin{align}
            \norm{r_n}_{L^2(\phi)}
            &\leq \frac{1}{32}\exp\left(\frac{n}{5.53}\right)\norm{q_n}_{L^1(\phi)}
            \label{proof:consequence}
            \\ &\leq
            \frac{1}{16}\exp\left(-\left\{\frac{\log(2)}{2}-\frac{1}{5.53}\right\}n\right)
            \norm{q_n}_{L^2(\phi)}.
            \label{proof:consequence2}
        \end{align}
        \item $g_n = q_n + r_n$ satisfies
        \begin{align}
            \lim_{n \to \infty}\frac{1}{n\log n}\log\left(\frac{1}{\norm{g_n}_{L^1(\phi)}}\right)
            &= \lim_{n \to \infty}\frac{1}{n\log n}\log\left(\frac{1}{\norm{g_n}_{L^2(\phi)}}\right)
            = \frac{1}{2}.
            \label{proof:conclusion}
        \end{align}
    \end{enumerate}
    \begin{proof}
        We will give the full proof in Appendix~\ref{appendix:sharpproof}. The key argument, which is to derive the bound \eqref{proof:induction} for $k>n$ is sketched below. Write the Chebyshev polynomial as 
        $T_{n+1}(x) = 2^n(x^{n+1} - \sigma_2x^{n-1} + \sigma_4x^{n-3} - \cdots + (-1)^{(n+1)/2}\sigma_{n+1})$. The choice of the support $\{a\theta_0,\ldots,a\theta_n\}$ implies that $T_{n+1}(\theta_j)=0$ for all $j=0,\cdots,n$, and thus $(a\theta_j)^{K+1} = \sigma_2 a^2 (a\theta_j)^{K-1} - \sigma_4 a^4 (a\theta_j)^{K-3} + \cdots + (-1)^{(n-1)/2}\sigma_{n+1}a^{n+1}(a\theta_j)^{K-n}$. This implies $|\Delta_{K+1}| = \left|\sum_{j=0}^n w_j (a\theta_j)^{K+1}\right|
        \leq \sigma_2 a^2|\Delta_{K-1}| + \sigma_4 a^4|\Delta_{K-3}| + \cdots + \sigma_{n+1}a^{n+1}|\Delta_{K-n}|$, from which we can bound all $|\Delta_k|$ for $k>n$ via mathematical induction.
    \end{proof}
\end{lemma}

\begin{corollary}[Sharp example of Theorem~\ref{theorem:uniformTV}]
    \label{corollary:construction}
    Recall the definition \eqref{definition:pi0} of $\pi_n^{(0)}$ and $\eta_n^{(0)}$ from the above. Let
    \begin{align}
    \label{definition:Rn}
        R_n &= \sqrt{8n+4}, &
        \lambda_n &= \exp(-R_n),
    \end{align}
    and accordingly define
    \begin{align}
        \pi_n^{(1)} &:= (1-\lambda_n)\delta_0 + \lambda_n\pi_n^{(0)}, &
        \eta_n^{(1)} &:= (1-\lambda_n)\delta_0 + \lambda_n\eta_n^{(0)},
        \label{definition:pi1}
    \end{align}
    where $\delta_0$ denotes the point mass at zero.
    Then, there exists a universal $N_0 \in \mathbb N$ such that it holds for all $n \geq N_0$ that
    \begin{align*}
        \mathrm{TV}\left(f_{\pi_n^{(1)}}, f_{\eta_n^{(1)}}\right)
        &= \frac{\lambda_n}{2}\norm{g_n}_{L^1(\phi)}, &
        \sqrt{\chi^2\left(f_{\pi_n^{(1)}} \| f_{\eta_n^{(1)}}\right)}
        &\geq \frac{\lambda_n}{4}\norm{q_n}_{L^2(\phi)}.
    \end{align*}
    \begin{proof}
        See Appendix~\ref{appendix:sharpproof}.
    \end{proof}
\end{corollary}

\begin{corollary}[Sharp example of Corollary~\ref{corollary:uniformTV}]
    \label{corollary:sharpness}
    Recall the definition \eqref{definition:pi1} of $\pi_n^{(1)}$ and $\eta_n^{(1)}$ from the above. Let
    \begin{align}
    \label{definition:pi2}
        \pi_n^{(2)} &:= \frac{1}{4}\pi_n^{(1)} + \frac{3}{4} \eta_n^{(1)}, &
        \eta_n^{(2)} &:= \eta_n^{(1)}.
    \end{align}
    Then, there exists a universal $N_0 \in \mathbb N$ such that it holds for all $n \geq N_0$ that
    \begin{align}
        \mathrm{TV}\left(f_{\pi_n^{(2)}}, f_{\eta_n^{(2)}}\right)
        &= \frac{\lambda_n}{8}\norm{g_n}_{L^1(\phi)}, &
        H\left(f_{\pi_n^{(2)}}, f_{\eta_n^{(2)}}\right)
        &\geq \frac{\lambda_n}{64}\norm{q_n}_{L^2(\phi)}.
        \label{proof:TVn}
    \end{align}
    \begin{proof}
    The equality for the total variation distance is straightforward.
    Now, observe for all $x \in \mathbb R$ that
    \begin{align*}
        u(x) &:= \frac{f_{\pi_n^{(1)}}(x)}{f_{\eta_n^{(1)}}(x)} - 1
        \\ &= \frac{
        (1-\lambda_n)\phi(x) + \sum_{j=0}^n \left(\frac{\lambda_n}{n+1}+\lambda_n w_j\right)\phi(x-a\theta_j)
        }{
        (1-\lambda_n)\phi(x) + \sum_{j=0}^n \frac{\lambda_n}{n+1}\phi(x-a\theta_j)
        } - 1
        \\ &\leq \max_{0 \leq j \leq n} \frac{\frac{\lambda_n}{n+1}+\lambda_n w_j}{\frac{\lambda_n}{n+1}} - 1 \leq 1
        \tag{$\because |w_j| \leq \frac{1}{n+1}$}
    \end{align*}
    and hence that $\norm{u}_\infty \leq 1$.
    Write
    \begin{align*}
        H^2\left(f_{\pi_n^{(2)}}, f_{\eta_n^{(2)}}\right) &=
        H^2\left(\frac{1}{4}f_{\pi_n^{(1)}} + \frac{3}{4}f_{\eta_n^{(1)}}, f_{\eta_n^{(1)}}\right) = \int F\left(1+\frac{u}{4}\right) f_{\eta_n^{(1)}},
    \end{align*}
    where $F(t) := \frac{1}{2}(\sqrt{t}-1)^2$.
    A Taylor expansion of $F$ gives
    \begin{align*}
        F\left(1+\frac{u}{4}\right) &= \frac{u^2}{128} - \frac{u^3}{32(4+v)^{5/2}}
        \tag{for some $|v| \leq |u|$}
        \\ &\geq \frac{u^2}{128} - \frac{u^2}{288\sqrt{3}}
        \tag{$\norm{u}_\infty \leq 1$}
        \\ &\geq \frac{u^2}{256}.
    \end{align*}
    Integrating against $f_{\eta_n^{(1)}}$ yields
    \begin{align*}
        H^2\left(f_{\pi_n^{(2)}}, f_{\eta_n^{(2)}}\right) &\geq \frac{1}{256}\chi^2\left(
        f_{\pi_n^{(1)}} \| f_{\eta_n^{(1)}}
        \right),
    \end{align*}
    concluding the proof.
    \end{proof}
\end{corollary}

Now we are ready to prove Theorem~\ref{theorem:sharp} (Sharpness of Corollary~\ref{corollary:uniformTV}) with the above $(\pi_n^{(2)},\eta_n^{(2)})$.

\begin{proof}[Proof of Theorem~\ref{theorem:sharp}]
    Let
    \begin{align*}
        \mathrm{TV}_n &:= \mathrm{TV}\left(f_{\pi_n^{(2)}}, f_{\eta_n^{(2)}}\right), &
        H_n &:= H\left(f_{\pi_n^{(2)}}, f_{\eta_n^{(2)}}\right).
    \end{align*}
    Then, \eqref{proof:conclusion}, \eqref{definition:Rn}, and \eqref{proof:TVn} imply that
    \begin{align*}
    \lim_{n \to \infty}\frac{1}{n\log n}\log\left(\frac{1}{\mathrm{TV}_n}\right)
    = \frac{1}{2}.
    \end{align*}
    Thus, it holds for large enough $n$ that
    \begin{align*}
        8\norm{g_n}_{L^1(\phi)} &\leq 8\norm{q_n}_{L^1(\phi)} + 8\norm{r_n}_{L^2(\phi)}
        \\ &\leq \frac{1}{2}\exp\left(\frac{n}{5.53}\right) \norm{q_n}_{L^1(\phi)}
        \tag{by \eqref{proof:consequence}}
        \\ &\leq \exp\left(-\left\{\log(2) - \frac{2}{5.53}\right\}\frac{n}{2}\right)\norm{q_n}_{L^2(\phi)}
        \tag{by \eqref{proof:consequence2}}
        \\ &\leq \exp\left(-0.33
        \frac{\log(1/\mathrm{TV}_n)}{\log\log(1/\mathrm{TV}_n)}
        \right)\norm{q_n}_{L^2(\phi)}.
    \end{align*}
    Multiply both sides by $\frac{\lambda_n}{64}$ to conclude that
    \begin{align*}
        \mathrm{TV}_n &= \frac{\lambda_n}{8}\norm{g_n}_{L^1(\phi)}
        \tag{by \eqref{proof:TVn}}
        \\ &\leq \mathrm{TV}_n^{\alpha^*(\mathrm{TV}_n)}\frac{\lambda_n}{64}\norm{q_n}_{L^2(\phi)}
        \tag{by the definition of $\alpha^*(\cdot)$}
        \\ &\leq \mathrm{TV}_n^{\alpha^*(\mathrm{TV}_n)}H_n.
        \tag{again by \eqref{proof:TVn}}
    \end{align*}
    According to Lemma~\ref{lemma:construction} and Corollaries~\ref{corollary:construction} and \ref{corollary:sharpness}, the above argument is valid for all odd integers $n \geq N_0$, where $N_0 \in \mathbb N$ is universal. We define
    \begin{align}
    \label{definition:pifinal}
        \pi_n &:= \pi^{(2)}_{2(n+N_0)+1}, &
        \eta_n &:= \eta^{(2)}_{2(n+N_0)+1}
    \end{align}
    to conclude the proof of Theorem~\ref{theorem:sharp}. That is, we are relabeling indices via the map $n \mapsto 2(n+N_0)+1$.
\end{proof}

\begin{remark}
    A careful reader can verify that the constant $0.33$ in $\alpha^*(\cdot)$ can be replaced by any positive real strictly less than $\log(2)-\frac{1}{4\log(2)} \approx 0.332$.
\end{remark}

\section{Applications}
\label{section:applications}
In this section, we provide a couple of consequences of our results.
The notations ``$\lesssim, \gtrsim, \asymp$'' in this section will hide constants depending on $M$ or $d$.

Due to page constraints, all the proofs for this section are deferred to Appendix~\ref{appendix:proofapp}.

\subsection{Entropic Characterization of Learning in TV}
\label{section:entropic}
The characterization of minimax rates of estimation via metric entropy has been extensively studied \citep{lecam1973convergence, birge1983approximation, birge1986estimating, yatracos1985rates, haussler1997mutual, yang1999information}.
While minimax upper and lower bounds do not necessarily match in general, recent work by \citet{jia2023entropic} showed that estimating Gaussian mixture densities with bounded support under the Hellinger distance admits an exact entropic characterization of the minimax rate, due to the fact that $H^2(f_\pi, f_\eta) \asymp \mathrm{KL}(f_\pi \| f_\eta)$. Similarly, our Corollary~\ref{corollary:uniformTV}, which relates the total variation and Hellinger distances, implies a corresponding characterization for the same problem under total variation, up to a $1-o(1)$ exponent in the rate.

We first define the metric entropy of Gaussian location mixtures, and then state a result of \citet{jia2023entropic}. 
\begin{definition}
    \label{definition:localmetric}
    Let $\mathcal{P}_{M,d}$ be the collection of $d$-dimensional Gaussian mixtures where the mixing distributions are supported on the $d$-dimensional
    cube $[-M, M]^d$.
    For a distribution class $\mathcal P \subseteq \mathcal P_{M,d}$, its (global) Hellinger covering number is defined by
    \begin{align*}
        N_{H}(\mathcal P, \epsilon) := \min \left\{
        N \right. &: \exists P_1, \ldots, P_N,
        \sup_{R \in \mathcal P} \inf_{1 \leq i \leq N} \left. H(R, P_i) \leq \epsilon
        \right\}.
    \end{align*}
    The local Hellinger covering number of $\mathcal{P}$ is
    \begin{align*}
        N_{H, loc}(\mathcal P, \epsilon) := \sup_{P \in \mathcal P, \eta \geq \epsilon} &N_H(B_H(P, \eta), \eta/2),
    \end{align*}
    where $B_H(P, \eta) = \left\{R \in \mathcal P: H(P, R) \leq \eta\right\}$.
    We define the global/local total variation covering number in the same manner.
\end{definition}

\begin{proposition}[Corollary 11 of \citet{jia2023entropic}]
    \label{proposition:jia11}
    Suppose $\mathcal P$ is a compact subset (in Hellinger) of $\mathcal P_{M,d}$. Let $\widehat P = \widehat P(X_1, \ldots, X_n)$ denote an estimator based on $X_1, \ldots, X_n$ drawn i.i.d. from $P \in \mathcal P$. Then,
    \begin{align*}
        \inf_{\widehat P}\sup_{P \in \mathcal P} \mathbb E_P \left[H^2\left(P, \widehat P\right)\right]
        &\asymp \inf_{\widehat P \in \mathcal P}\sup_{P \in \mathcal P} \mathbb E_P \left[H^2\left(P, \widehat P\right)\right] \asymp \epsilon_n^2,
    \end{align*}
    where
    \begin{align}
    \label{definition:epsilonn}
        \epsilon_n^2 &\asymp \inf_{\epsilon > 0}\left( \epsilon^2 + \frac{1}{n}\log N_{H, loc}(\mathcal P, \epsilon)\right).
    \end{align}
\end{proposition}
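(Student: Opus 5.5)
This proposition is quoted from \citet{jia2023entropic}, so the plan below reconstructs its proof. It rests on the two-sided relation \eqref{eq:Jia-result} between $H^2$ and $\mathrm{KL}$ on $\mathcal P_{M,d}$. The strategy is the standard Le Cam--Birg\'e / Yang--Barron scheme: a matching upper bound from a tournament (pairwise testing) estimator and a lower bound from Fano's inequality on a local packing. The relation $H^2\asymp\mathrm{KL}$ is exactly what closes the usual gap between the two.

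\textbf{Reduction between the two minimax risks.} Restricting to proper estimators ($\widehat P\in\mathcal P$) can only increase the minimax risk, so the improper risk is at most the proper risk. Conversely, any improper $\widehat P$ can be replaced by a near-closest Hellinger point $\widetilde P\in\mathcal P$; this exists by compactness. The triangle inequality gives $H(P,\widetilde P)\le 2H(P,\widehat P)+o(1)$ for every $P\in\mathcal P$. Hence the two risks agree up to a factor $4$, and it suffices to prove an upper bound for proper estimators and a lower bound for arbitrary ones.

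\textbf{Upper bound.} I would run the Le Cam--Birg\'e construction.
\begin{enumerate}
\item Fix $\epsilon$, and build nested $\epsilon_j$-nets ($\epsilon_j=2^{-j}$) whose local cardinality is controlled by $N_{H,loc}(\mathcal P,\epsilon)$. Each ball of radius $\eta\ge\epsilon$ is covered by $N_{H,loc}$ balls of radius $\eta/2$, so the refinement from level $j$ to $j+1$ adds only $N_{H,loc}$ children per node.
\item Use Hellinger tests between net points: the likelihood-ratio test between $P_i$ and $P_k$ has error at most $\exp(-c\,nH^2(P_i,P_k))$.
\item Select a net point by a tournament. A union bound over the $N_{H,loc}$ candidates at each level, together with a geometric sum over levels, gives $\mathbb E H^2(P,\widehat P)\lesssim \epsilon^2+\frac1n\log N_{H,loc}(\mathcal P,\epsilon)$.
\end{enumerate}
This direction is distribution-free and does not use \eqref{eq:Jia-result}. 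The net points can be chosen inside $\mathcal P$, so the resulting estimator is proper.

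\textbf{Lower bound.} Fix $\epsilon$ with $\log N_{H,loc}(\mathcal P,\epsilon)\gtrsim n\epsilon^2$, and pick $P$ and $\eta\ge\epsilon$ attaining (nearly) the supremum in the definition of $N_{H,loc}$. From the covering number of $B_H(P,\eta)$ at scale $\eta/2$, extract an $\eta/2$-separated (or $\eta/4$-separated) packing $\{P_1,\dots,P_m\}\subset B_H(P,\eta)$ with $\log m\gtrsim\log N_{H,loc}(\mathcal P,\epsilon)$.

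This is the step where the Gaussian-mixture structure enters. By \eqref{eq:Jia-result} and the triangle inequality,
\begin{align*}
\mathrm{KL}(P_i\|P)\lesssim H^2(P_i,P)\le\eta^2 .
\end{align*}
So the mutual information between a uniform index and the sample satisfies $I\le n\max_i\mathrm{KL}(P_i^{\otimes n}\|P^{\otimes n})/n\lesssim n\eta^2$. Fano's inequality then gives a risk lower bound of order $\eta^2$ whenever $n\eta^2\le c\log m$. Balancing over $\epsilon$ at the critical scale, where $\epsilon^2\asymp\frac1n\log N_{H,loc}(\mathcal P,\epsilon)$, matches the upper bound \eqref{definition:epsilonn}.

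\textbf{Main obstacle.} The delicate step is the lower bound's calibration. I need to choose $\eta$ (which may exceed $\epsilon$) so that $n\eta^2\le c\log N_{H,loc}$ holds while $\eta^2$ is still of the order of the infimum in \eqref{definition:epsilonn}. This needs monotonicity in $\epsilon$ of $\log N_{H,loc}(\mathcal P,\epsilon)$, which holds because the supremum is over $\eta\ge\epsilon$. It also needs care in the regime where $\log N_{H,loc}$ is bounded, where a two-point Le Cam argument, again using $\mathrm{KL}\lesssim H^2$, handles the $\epsilon^2$ floor.
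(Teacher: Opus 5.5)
The paper gives no proof of this proposition; it imports it from \citet{jia2023entropic}. That source follows the route you outline: a Le Cam--Birg\'e upper bound and a Fano lower bound, with $\mathrm{KL}\lesssim H^2$ supplying the mutual-information bound. Your proper/improper reduction and the upper bound are fine. The gap is the lower-bound calibration you flagged, and your proposed fix does not work. From $\log N_{H,loc}(\mathcal P,\epsilon)\gtrsim n\epsilon^2$ you only obtain, for some $\eta\ge\epsilon$, an $\eta/2$-separated set in $B_H(P,\eta)$ with $\log m\gtrsim n\epsilon^2$. Fano at that radius needs $\log m\gtrsim n\eta^2+1$, and nothing controls $\eta/\epsilon$. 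Monotonicity of $\epsilon\mapsto N_{H,loc}(\mathcal P,\epsilon)$ is produced by the $\sup_{\eta\ge\epsilon}$, which is exactly what decouples the packing radius from $\epsilon$, so it cannot close this step. The two-point ``floor'' also fails in general: it needs two elements of $\mathcal P$ within Hellinger distance $O(n^{-1/2})$, and these need not exist. (A minor point: $\mathrm{KL}(P_i\|P)\lesssim H^2(P_i,P)\le\eta^2$ needs no triangle inequality.)

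In fact this step cannot be repaired for an arbitrary compact $\mathcal P$. Take $\mathcal P=\{P,Q\}$ with $H(P,Q)=\eta_0>0$. For every $\epsilon\le\eta_0$, the ball $B_H(P,\eta_0)=\{P,Q\}$ cannot be covered by one Hellinger ball of radius $\eta_0/2$: by the parallelogram law a common center $S$ would need $\sqrt s=(\sqrt p+\sqrt q)/2$, whose $L^2$ norm is $<1$. Hence $N_{H,loc}(\mathcal P,\epsilon)\ge 2$, and
$\inf_{\epsilon>0}\bigl(\epsilon^2+\frac1n\log N_{H,loc}(\mathcal P,\epsilon)\bigr)\ge\min\{\eta_0^2,\frac{\log 2}{n}\}$, which is of order $1/n$. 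Yet the likelihood-ratio test, a proper estimator, has risk at most $\eta_0^2(1-\eta_0^2)^n$, which is exponentially small. What your Fano argument actually proves is the lower bound $\gtrsim\sup\{\eta^2:\sup_{P\in\mathcal P}\log N_H(B_H(P,\eta),\eta/2)\ge C(n\eta^2+1)\}$, with the packing and the KL bound taken at the same radius. Matching this with the Le Cam--Birg\'e rate requires an extra property of the class: the single-radius local entropy must be, up to constants, non-increasing in $\eta$. That property has to come from the structure of the particular class, not from compactness alone. Your proof, and the statement as quoted, need such a hypothesis, so check the precise setting in which \citet{jia2023entropic} state their Corollary 11.
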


Unlike the Hellinger distance, there only exists an entropic characterization of the minimax upper bound in total variation \citep{yatracos1985rates}. An entropic lower bound is not available in the literature to the best of our knowledge.
By Corollary~\ref{corollary:uniformTV}, the rate $\epsilon_n$ determined by the local Hellinger entropy \eqref{definition:epsilonn} also characterizes the minimax rate of estimation under total variation as follows.

\begin{theorem}[Learning Gaussian mixtures in total variation]
    \label{theorem:learninginTV}
    Under the same conditions as in Proposition~\ref{proposition:jia11},
    for any $\delta > 0$, we have
    \begin{align*}
        \epsilon_n^{2\left(1+\frac{2+\delta}{\log(\log(1/\epsilon_n) \vee e)}\right)}
        &\lesssim \inf_{\widehat P}\sup_{P \in \mathcal P} \mathbb E_P \left[\mathrm{TV}^2\left(P, \widehat P\right)\right]
        \asymp \inf_{\widehat P \in \mathcal P}\sup_{P \in \mathcal P} \mathbb E_P \left[\mathrm{TV}^2\left(P, \widehat P\right)\right]
        \lesssim \epsilon_n^2,
    \end{align*}
    where we define $\epsilon_n$ as in \eqref{definition:epsilonn}.
\end{theorem}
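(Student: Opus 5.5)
The plan is to transfer the Hellinger minimax rate from Proposition~\ref{proposition:jia11} to total variation, using the elementary comparison \eqref{eq:well-known} for the upper bound and Corollary~\ref{corollary:uniformTV} for the lower bound.

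\textbf{Upper bound.} Take a proper estimator $\widehat P \in \mathcal P$ that attains the Hellinger rate in Proposition~\ref{proposition:jia11}. Since $\mathrm{TV}^2 \le 2H^2$ pointwise by \eqref{eq:well-known}, its TV risk is $\lesssim \epsilon_n^2$. This gives the rightmost inequality for both infima.

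\textbf{Equivalence of the two infima.} The improper infimum is trivially at most the proper one. For the reverse direction, take any estimator $\widehat P$ and project it onto $\mathcal P$ in TV: let $\widetilde P \in \mathcal P$ satisfy $\mathrm{TV}(\widetilde P,\widehat P) \le \inf_{R\in\mathcal P}\mathrm{TV}(R,\widehat P) + \text{(small slack)}$, using compactness or an approximate minimizer. The triangle inequality then gives
\[
\mathrm{TV}(P,\widetilde P) \le 2\,\mathrm{TV}(P,\widehat P) + \text{slack}.
\]
So the two infima agree up to a factor of $4$.

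\textbf{Lower bound.} This is the real step. Take any proper estimator $\widehat P\in\mathcal P$. Both $P$ and $\widehat P$ lie in $\mathcal P_{M,d}$, so Corollary~\ref{corollary:uniformTV} gives pointwise $H \le \psi(\mathrm{TV})$, where $\psi(t) := \max(C_0 t, t^{1-\alpha(t)})$. I need to invert this in expectation, and will do it in three steps.

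First, identify a convex increasing minorant. Set $\Phi(t) := \psi^{-1}$ in the form $H^2 \lesssim$ a function of $\mathrm{TV}^2$. Equivalently, show $\mathrm{TV}^2 \ge G(H^2)$, where $G(s) \approx s^{1+\alpha'(s)}$ and $\alpha'(s) = \frac{2+\delta}{\log\log(1/s)}$ up to small reparametrization of $\delta$. Concretely, $t^{1-\alpha(t)} = h$ gives $\log(1/t)(1-\alpha(t)) = \log(1/h)$. Hence $\log(1/t) \le \log(1/h)(1+\alpha(t)+O(\alpha^2))$, and $\log\log(1/t) \approx \log\log(1/h)$. Absorbing the second-order terms into a slightly larger $\delta$ yields $t \ge h^{1+\alpha''(h)}$ for small $h$. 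For $h$ bounded below, $t \gtrsim h$ holds by compactness and the $C_0$ branch.

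Second, apply Jensen. Jensen's inequality needs $G$ convex on $[0,1]$, which holds near $0$ since $s^{1+o(1)}$ with slowly varying exponent is eventually convex. If necessary I replace $G$ by its convex minorant, which differs by constants. Then $\mathbb E\,\mathrm{TV}^2 \ge G(\mathbb E H^2)$, and $\mathbb E H^2 \gtrsim \epsilon_n^2$ by Proposition~\ref{proposition:jia11}.

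Third, conclude. Since $G$ is increasing, $G(c\epsilon_n^2) \gtrsim \epsilon_n^{2(1+\alpha)}$, where the constant $c$ is absorbed because $\alpha$ is slowly varying.

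The main obstacle is this inversion together with the convexity and monotonicity bookkeeping, in particular keeping the exponent exactly at $(2+\delta)/\log\log(1/\epsilon_n)$. I would handle it by applying the corollary with $\delta/2$ and spending the remaining $\delta/2$ on the error terms.
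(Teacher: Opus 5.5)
Your proposal is correct. The upper bound and the proper/improper reduction by TV projection match the paper's, but your lower bound takes a genuinely different route.

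\textbf{How the paper does it.} The paper never inverts in expectation. It uses the high-probability bound $\mathbb P[H(P,\widehat P)\ge \epsilon_n/4]\ge 1/2$. This comes from the Fano argument inside the proof of Corollary 11 of \citet{jia2023entropic}, not from the statement of Proposition~\ref{proposition:jia11}. It then needs only monotonicity of $\mathcal J(t)=C_0t\vee t^{1-\alpha(t)}$ near $0$. That gives $\mathbb P[\mathrm{TV}(P,\widehat P)\ge \mathcal J^{-1}(\epsilon_n/4)]\ge 1/2$, and hence $\mathbb E\,\mathrm{TV}^2\ge \frac12(\mathcal J^{-1}(\epsilon_n/4))^2$.

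\textbf{What your route buys and costs.} You use Proposition~\ref{proposition:jia11} purely as a black box: you only need $\sup_P\mathbb E_P H^2\gtrsim\epsilon_n^2$ for proper estimators. The price is Jensen, which forces you to build a convex nondecreasing minorant of the pointwise inverse bound on all of $[0,1]$. The paper avoids this bookkeeping entirely.

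\textbf{The bookkeeping does close.} Set $u=\log(1/s)$ and $c=2+\delta$. Then $G(s)=s^{1+c/\log\log(1/s)}$ satisfies
\[
G'(s)=e^{-cu/\log u}\bigl(1+c(\log u-1)/\log^2u\bigr).
\]
Both positive factors increase in $s$ once $u>e^2$, so $G$ is convex near $0$. Three points should be made explicit in a write-up:
\begin{enumerate}
\item For the global minorant, a tangent-line continuation of $G$ itself from a small $s_1$ overshoots the available bound on $[s_1,1]$, because $G'(s_1)\approx G(s_1)/s_1$. Continue $\kappa G$ with $\kappa\asymp s_1$ instead. This is the precise sense in which the minorant ``differs by constants''.
\item Your $\psi$ must be inverted in the generalized sense. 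On $[e^{-e},1)$, $\alpha$ is a constant larger than $1$, so $t^{1-\alpha(t)}$ is decreasing there.
\item The supremum over $P$ passes through the minorant, giving $\sup_P\mathbb E_P\mathrm{TV}^2\ge \widehat G(\sup_P\mathbb E_PH^2)$ for the final minorant $\widehat G$, only because $\widehat G$ is nondecreasing.
\end{enumerate}
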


\subsection{Robust Density Estimation}
\label{section:robust}
In this section, we consider the problem of estimating a Gaussian mixture from contaminated data,
\begin{align}
    X_1, \ldots, X_n \overset{i.i.d.}{\sim} P := (1-\epsilon)P_{f_{\pi}}+\epsilon Q, \label{eq:huber-density}
\end{align}
where the distribution $P_{f_{\pi}} \in \mathcal P_{M,d}$ has density $f_{\pi}$ and $Q$ is an arbitrary contamination distribution. The data-generating process in \eqref{eq:huber-density} is known as Huber's contamination model \citep{huber1964robust}. Robust density estimation under Huber contamination has been previously studied by \citet{liu2019density,humbert2022robust,zhang2023adaptive}, and kernel density estimators have been shown to achieve optimal rates for estimating H\"older smooth densities.

Our main goal is to estimate the Gaussian mixture $f_{\pi}$ under the Hellinger distance, since the Hellinger error in density estimation directly implies a regret bound for empirical Bayes learning in the Gaussian sequence model \citep{jiang2009general, saha2020nonparametric}.

To this end, we first introduce a robust estimator that enjoys statistical guarantees under the total variation distance. This follows from the classical construction of \citet{yatracos1985rates}, since the Huber contamination model \eqref{eq:huber-density} can be viewed as a special case of model misspecification measured in total variation distance. Details of the Yatracos' estimator are deferred to Appendix~\ref{appendix:yatracos}. Although this estimator is not computationally efficient (i.e., not polynomial-time computable), it serves as a useful statistical benchmark. Its guarantee is stated in the following proposition.

\begin{proposition}[Robust density estimation in TV]
    \label{proposition:yatracos}
    Consider the data-generating process in \eqref{eq:huber-density}.
    Then, the Yatracos' estimator $\widehat{f}$ satisfies
    \begin{align*}
        \sup_{\pi, Q} \mathbb E
        \left[\mathrm{TV}^2\left(f_\pi, \widehat f\right)\right] &\lesssim
        \epsilon^2 + \frac{\log^{d+1}(n)}{n},
    \end{align*}
    where the expectation is under \eqref{eq:huber-density} and
    the supremum is taken over all $Q$ and $\pi$ such that $\mathrm{supp}(\pi) \subseteq [-M, M]^d$.
\end{proposition}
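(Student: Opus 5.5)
We follow the classical Yatracos construction. Since $\mathcal P_{M,d}$ is not a finite class, we first discretize it. For every pair $(f_i,f_j)$ in a finite cover, we define the Yatracos set $A_{ij}=\{x: f_i(x)>f_j(x)\}$. The estimator $\widehat f$ is the cover element minimizing $\max_{i<j}|P_{f}(A_{ij})-P_n(A_{ij})|$, where $P_n$ is the empirical measure. The standard Yatracos/minimum-distance analysis then gives
\begin{align*}
    \mathrm{TV}(f_\pi,\widehat f) \leq 3\,\mathrm{TV}(f_\pi, f_{i^*}) + 4\max_{i<j}|P_n(A_{ij}) - P(A_{ij})| + 2\,\mathrm{TV}(P_{f_\pi}, P),
\end{align*}
where $f_{i^*}$ is the cover element closest to $f_\pi$. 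The last term is at most $\epsilon$, because $\mathrm{TV}((1-\epsilon)P_{f_\pi}+\epsilon Q, P_{f_\pi}) \leq \epsilon$. This term is the only place where the contamination enters, and it does so linearly.

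For the approximation term, we take a TV-cover of $\mathcal P_{M,d}$ at scale $1/\sqrt n$. We will rely on the known metric entropy bound for compactly supported Gaussian location mixtures, $\log N_{\mathrm{TV}}(\mathcal P_{M,d},\varepsilon)\lesssim \log^{d+1}(1/\varepsilon)$. This bound is obtained by moment matching: any $\pi$ can be replaced by a discrete measure with $O(\log^d(1/\varepsilon))$ atoms that matches all moments of order up to $k\asymp\log(1/\varepsilon)$ and gives $\varepsilon$-close mixtures. One then quantizes atoms and weights. Results of this kind are in Ghosal--van der Vaart and Saha--Guntuboyina. With the cover at scale $1/\sqrt n$, the first term is at most $1/\sqrt n$, and the class contains at most $N=\exp(C\log^{d+1} n)$ elements.

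For the stochastic term, there are at most $N^2$ fixed sets $A_{ij}$. Hoeffding's inequality together with a union bound gives
\begin{align*}
    \mathbb E\max_{i<j}|P_n(A_{ij})-P(A_{ij})|^2\lesssim \frac{\log N}{n}\lesssim \frac{\log^{d+1} n}{n}.
\end{align*}
The key point is that the $A_{ij}$ depend only on the cover and not on the data. The concentration is also with respect to the true sampling distribution $P$, so the argument works for arbitrary $Q$.

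Squaring the decomposition and using $(a+b+c)^2\le 3(a^2+b^2+c^2)$ yields
\begin{align*}
    \mathbb E\,\mathrm{TV}^2\lesssim \epsilon^2+\frac1n+\frac{\log^{d+1}n}{n},
\end{align*}
uniformly over $\pi$ and $Q$, which is the claimed bound. The main obstacle is the metric entropy estimate $\log^{d+1}(1/\varepsilon)$ in TV for the $d$-dimensional class. If it is not taken from the literature, I would prove it by first truncating to $\|x\|_\infty\lesssim\sqrt{\log(1/\varepsilon)}$. On that region I would bound the $L^1$ difference by the sup of the Taylor remainder of $\phi_d(x-\theta)$ in $\theta$ after matching moments up to order $k$. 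Carathéodory's theorem supplies an atomic measure with $\binom{k+d}{d}\asymp\log^d(1/\varepsilon)$ atoms, and quantizing those atoms and weights to polynomial precision contributes one more logarithmic factor.
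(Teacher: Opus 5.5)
Your proposal is correct and follows essentially the same route as the paper. Both build a Yatracos class from a finite TV-cover with $\log N\lesssim\log^{d+1}(1/\eta)$, taken from the literature, and both apply Hoeffding plus a union bound over the at most $N^2$ data-independent sets. In both, the contamination enters only linearly through $\mathrm{TV}(P,P_{f_\pi})\le\epsilon$, and the cover scale is of order $1/\sqrt n$.

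The differences are minor: you select the minimizer within the finite cover, whereas the paper's estimator minimizes the Yatracos pseudo-distance over all of $\mathcal P_{M,d}$ (yours is still proper since your moment-matching cover is internal), and the coefficient $4$ on your stochastic term can in fact be $2$.
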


Note that the Yatracos' estimator is a proper estimator in the sense that $\widehat{f}$ itself is also a Gaussian location mixture with mixing distribution supported on $[-M,M]^d$.
Thus, our Corollary~\ref{corollary:uniformTV} directly implies a minimax upper bound in Hellinger distance as follows.

\begin{theorem}[Robust density estimation in Hellinger]
    \label{theorem:robust}
    Consider the data-generating process in \eqref{eq:huber-density}.
    Suppose $\delta > 0$. Then, the Yatracos' estimator $\widehat f$ satisfies
    \begin{align}
        \label{proof:expectationbound}
        \sup_{\pi, Q} \mathbb E
        \left[H^2\left(f_\pi, \widehat f\right)\right] &\lesssim \mathcal E^2(\epsilon, n),
    \end{align}
    where we define
    \begin{align}
        \label{definition:errorfunction}
        \mathcal E^2(\epsilon, n) :=
        \epsilon^{2\left(1-\frac{2+\delta}{\log(\log(1/\epsilon) \vee e)}\right)}
        &+ \frac{1}{n^{1-o_d(1)}},
    \end{align}
    the expectation is under \eqref{eq:huber-density},
    the supremum is taken over all $Q$ and $\pi$ such that $\mathrm{supp}(\pi) \subseteq [-M, M]^d$, and
    $o_d(1)$ is a positive real-valued function of $n$ and $d$, which converges to zero as $n \to \infty$.
\end{theorem}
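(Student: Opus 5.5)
The plan is to combine Proposition~\ref{proposition:yatracos} with Corollary~\ref{corollary:uniformTV}, using that the Yatracos estimator is proper. Since $\widehat f = f_{\widehat\pi}$ with $\mathrm{supp}(\widehat\pi)\subseteq[-M,M]^d$, Corollary~\ref{corollary:uniformTV} applies pointwise on the sample space with $T := \mathrm{TV}(f_\pi,\widehat f)$:
\begin{align*}
    H^2(f_\pi,\widehat f) \le C_0^2 T^2 \vee \psi(T), \qquad \psi(t) := t^{2(1-\alpha(t))}.
\end{align*}
Hence $\mathbb E[H^2] \le C_0^2\,\mathbb E[T^2] + \mathbb E[\psi(T)]$. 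The first term is already bounded by $\epsilon^2 + \log^{d+1}(n)/n$, which is dominated by $\mathcal E^2(\epsilon,n)$. All the work is in controlling $\mathbb E[\psi(T)]$.

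\textbf{Step 1: concavify $\psi$.} Writing $\psi(t) = \exp\{2\log t + 2(2+\delta)\log(1/t)/\log(\log(1/t)\vee e)\}$, the function $\psi$ behaves like $t^2$ up to a subpolynomial factor. I would verify that, on $[0,t_0]$ for some small absolute $t_0$, the map $s\mapsto \psi(\sqrt s)$ is increasing and concave, possibly after replacing $\delta$ by a slightly smaller $\delta'$ and absorbing the difference. On $[t_0,1]$, $\psi$ is bounded by a constant multiple of $t^2$. This gives $\psi(t) \le \Psi(t^2) + C t^2$ with $\Psi$ concave and increasing on $[0,1]$. Jensen's inequality then yields
\begin{align*}
    \mathbb E[\psi(T)] \le \Psi(\mathbb E T^2) + C\,\mathbb E T^2 \le \Psi\!\left(C'(\epsilon^2 + \log^{d+1}(n)/n)\right) + C''\left(\epsilon^2 + \log^{d+1}(n)/n\right).
\end{align*}
I expect this step to be the main technical obstacle. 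It requires checking the sign of the second derivative of $\log\Psi$ in the variable $u=\log(1/s)$; near the switch point $\log(1/t)=e$ the $\vee e$ kink may need smoothing.

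\textbf{Step 2: split the argument of $\Psi$.} Use $\Psi(a+b)\le \Psi(2a)\vee\Psi(2b)$ together with the subpolynomial nature of the correction factor. One gets $\Psi(2C'\epsilon^2) \lesssim \epsilon^{2(1-\alpha(\epsilon))}$ up to an adjustment of $\delta$, since $\log\log$ of a constant multiple of $1/\epsilon$ differs from $\log\log(1/\epsilon)$ by $o(1)$. Because $\delta>0$ is arbitrary, this loss is absorbed; I would start from $\delta/2$ in Corollary~\ref{corollary:uniformTV}.

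For the sample term, $\Psi(2C'\log^{d+1}(n)/n) = n^{-(1-o(1))}$, since $\alpha(n^{-1/2}\log^{(d+1)/2} n)\to 0$ and the polylog factor also contributes only $n^{o(1)}$. This defines the function $o_d(1)$, which depends on $n$ and $d$ and vanishes as $n\to\infty$. Combining the two pieces gives $\mathbb E[H^2]\lesssim \mathcal E^2(\epsilon,n)$ uniformly over $\pi$ and $Q$, with constants depending only on $M,d,\delta$.

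As a fallback if concavity is awkward to establish, I would instead use a high-probability bound for Yatracos: $T \lesssim \epsilon + \sqrt{(\log^{d+1}n + \log(1/\beta))/n}$ with probability at least $1-\beta$. Taking $\beta=1/n$, apply the monotonicity of $\psi$ on the good event and the bound $H^2\le 1$ on the bad event, which contributes only $1/n$. This reaches the same conclusion using only monotonicity of $\psi$ near $0$.
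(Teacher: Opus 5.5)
Your proposal is correct, but it takes a different route from the paper.

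\textbf{The paper's route.} The paper argues pathwise. It shows that $G(t)=t^{1-\alpha(t)}$ is increasing and concave in $t$ near $0$, and deduces that $\mathcal J(t)=C_0t\vee G(t)$ is subadditive. It then applies $\mathcal J$ to the Yatracos oracle inequality $\mathrm{TV}(f_\pi,\widehat f)\le 4\epsilon+3\eta+2\,\mathrm{dist}(P,\widehat P_n)$. This gives $H\le 4\mathcal J(\epsilon)+3\mathcal J(\eta)+2\mathcal J(\mathrm{dist}(P,\widehat P_n))$, which it squares by Cauchy--Schwarz before taking expectations.

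\textbf{Your route.} You use Proposition~\ref{proposition:yatracos} as a black box for $\mathbb E[T^2]$ and pass the expectation through $\psi$ by Jensen. This needs concavity in the variable $s=t^2$, i.e.\ of $s\mapsto s^{1-\alpha(\sqrt s)}$. That property does hold near $0$: the same second-derivative computation as for the paper's $G$ shows the leading term of $\Psi''(s)$ is $-\Psi(s)s^{-2}\cdot c/\log\log(1/s)$ with $c=2+\delta$. Your tangent-line extension beyond $s_0$ then keeps $\Psi$ concave and increasing on $[0,1]$.

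\textbf{What each approach buys.} Subadditivity in the paper cleanly separates the deterministic $\epsilon$ and $\eta$ contributions; the constants are absorbed via $\mathcal J(4\epsilon)\le 4\mathcal J(\epsilon)$. However, it leaves $\mathbb E[\mathcal J^2(\mathrm{dist}(P,\widehat P_n))]$ implicit. Bounding that term requires either your Jensen step in $t^2$ or your fallback, a tail integration with the Hoeffding/union bound. Your route handles the stochastic term in one shot. The price is the separate slowly-varying estimate $\Psi(K\epsilon^2)\lesssim\epsilon^{2(1-\alpha(\epsilon))}$ in Step 2. That estimate holds without shrinking $\delta$, since $2\log(1/\epsilon)\{\alpha(\sqrt K\epsilon)-\alpha(\epsilon)\}=O(\log K/\log^2\log(1/\epsilon))\to 0$.
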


We note that estimation of $f_{\pi}$ in the Hellinger distance has previously been studied by \citet{saha2020nonparametric, kim2022minimax, soloff2025multivariate} in the special case of \eqref{eq:huber-density} with $\epsilon = 0$. Compared with these results, the second term $n^{-(1-o_d(1))}$ in \eqref{definition:errorfunction} may still be improvable. However, improving this term would require techniques different from those used in Corollary~\ref{corollary:uniformTV}, and we leave this question for future work. On the other hand, the first term $\epsilon^{2\left(1-\frac{2+\delta}{\log(\log(1/\epsilon) \vee e)}\right)}$ in \eqref{definition:errorfunction} is optimal. The following result is obtained by applying the two-point argument of \citet{chen2018robust} to the sharpness example constructed in Theorem~\ref{theorem:sharp}.

\begin{theorem}[Minimax lower bound on robust density estimation in Hellinger]
    \label{theorem:robustlower}
    Consider the data-generating process in \eqref{eq:huber-density}.
    Then, we have
    \begin{align*}
        \inf_{\widehat f} \sup_{\pi, Q} \mathbb E
        \left[H^2\left(f_\pi, \widehat f\right)\right] &\gtrsim
        \epsilon^{2\left(1-\frac{0.33}{\log(\log(1/\epsilon) \vee e)}\right)},
    \end{align*}
    where the expectation is under \eqref{eq:huber-density} and the supremum is taken over all $Q$ and $\pi$ such that $\mathrm{supp}(\pi) \subseteq [-M, M]^d$.
\end{theorem}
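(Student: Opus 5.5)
We will prove Theorem~\ref{theorem:robustlower} by Le Cam's two-point method adapted to Huber contamination, in the form of \citet{chen2018robust}, applied to the sharp pair from Theorem~\ref{theorem:sharp}. The guiding fact is this: if two candidate densities $f_0 = f_{\pi_n}$ and $f_1 = f_{\eta_n}$ satisfy $\mathrm{TV}(f_0,f_1) \leq \epsilon/(1-\epsilon)$, then there exist contaminations $Q_0,Q_1$ with $(1-\epsilon)P_{f_0}+\epsilon Q_0 = (1-\epsilon)P_{f_1}+\epsilon Q_1$. Consequently the data distributions coincide for every sample size, and no estimator can distinguish them.

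To construct $Q_0,Q_1$, write $t=\mathrm{TV}(f_0,f_1)$ and set
\[
Q_0 = \frac{(1-\epsilon)(f_1-f_0)_+}{\epsilon} + \Big(1-\frac{(1-\epsilon)t}{\epsilon}\Big)R, \qquad Q_1 = \frac{(1-\epsilon)(f_0-f_1)_+}{\epsilon} + \Big(1-\frac{(1-\epsilon)t}{\epsilon}\Big)R,
\]
for any probability measure $R$. Both are probability measures when $(1-\epsilon)t\le \epsilon$, and they make the two mixtures equal. The standard two-point argument together with the triangle inequality for $H$ then gives
\[
\inf_{\widehat f}\max_{i}\mathbb E\,H^2(f_i,\widehat f)\ \ge\ \tfrac14 H^2(f_0,f_1),
\]
because $H(f_0,\widehat f)+H(f_1,\widehat f)\ge H(f_0,f_1)$ for the common law of the data.

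It remains to choose the pair. Theorem~\ref{theorem:sharp} supplies a sequence $\mathrm{TV}_n\downarrow 0$ with $H_n\ge \mathrm{TV}_n^{1-\alpha^*(\mathrm{TV}_n)}$, supported on $[-M,M]$. We embed it in $d$ dimensions by taking product mixing measures $\pi_n\otimes\delta_0^{\otimes(d-1)}$. Both TV and $H$ are preserved, because the extra coordinates contribute an identical standard Gaussian factor. Given small $\epsilon$, we pick the largest $n$ with $\mathrm{TV}_n\le\epsilon$. The function $t\mapsto t^{2(1-\alpha^*(t))}$ is increasing for small $t$, so we obtain $H_n^2\ge \mathrm{TV}_n^{2(1-\alpha^*(\mathrm{TV}_n))}$.

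The main obstacle is that the sequence $\mathrm{TV}_n$ is discrete, so $\mathrm{TV}_n$ might be much smaller than $\epsilon$. We handle this in one of two ways. The first is to check that consecutive terms satisfy $\mathrm{TV}_{n}\ge \mathrm{TV}_{n-1}^{1+o(1)}$, using $\log(1/\mathrm{TV}_n)\sim \tfrac12 n\log n$ with the index map $n\mapsto 2(n+N_0)+1$. That yields only $\epsilon^{2(1-\alpha^*)(1+o(1))}$, and the $o(1)$ term may not be small enough relative to $\alpha^*$.

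The cleaner fix, which we prefer, is to interpolate continuously. Replace $\pi_n$ by $(1-s)\eta_n+s\pi_n$ for $s\in(0,1]$. Then TV scales exactly by $s$, so we can solve $s\,\mathrm{TV}_n=\epsilon$ for the smallest $n$ with $\mathrm{TV}_n\ge\epsilon$. Using $\|u\|_\infty\le 1$ as in Corollary~\ref{corollary:sharpness}, the same Taylor argument shows $H^2$ scales at least like $s^2 H_n^2/C$. This gives
\[
H^2 \gtrsim s^2 \mathrm{TV}_n^{2(1-\alpha^*(\mathrm{TV}_n))} = \epsilon^2\,\mathrm{TV}_n^{-2\alpha^*(\mathrm{TV}_n)} \ge \epsilon^{2}\,\epsilon^{-2\alpha^*(\epsilon)},
\]
where the last step uses $\mathrm{TV}_n\ge\epsilon$ and that $t^{-\alpha^*(t)}$ is decreasing. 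This is the claimed bound. For $\epsilon$ not small, the bound is trivial after adjusting constants.
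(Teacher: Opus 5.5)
Your overall architecture (the coupling $Q_0,Q_1$, which is Lemma~\ref{lemma:density}; the two-point bound $\tfrac14 H^2(f_0,f_1)$; the embedding via $\otimes\,\delta_0^{\otimes(d-1)}$) is the same as the paper's. The paper simply asserts that Theorem~\ref{theorem:sharp} supplies, for the given $\epsilon$, a pair with $\mathrm{TV}\le\epsilon$ and $H\gtrsim\epsilon^{1-0.33/\log\log(1/\epsilon)}$. You are right that a discrete sequence does not give this for free.

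However, the last step of your interpolation fix is backwards. Since $t\mapsto t^{-\alpha^*(t)}=\exp\bigl(0.33\log(1/t)/\log\log(1/t)\bigr)$ is decreasing and $\mathrm{TV}_n\ge\epsilon$, one gets $\mathrm{TV}_n^{-2\alpha^*(\mathrm{TV}_n)}\le\epsilon^{-2\alpha^*(\epsilon)}$, not $\ge$. What your construction actually delivers is $H\gtrsim\epsilon\,\mathrm{TV}_n^{-\alpha^*(\mathrm{TV}_n)}$. That is the ratio $H/\mathrm{TV}$ belonging to the larger level $\mathrm{TV}_n$, and it is smaller than the required ratio $\epsilon^{-\alpha^*(\epsilon)}$. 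The likelihood ratio here is bounded, so mixing toward $\eta_n$ really does scale $H$ like $s$, and nothing more can be extracted. Interpolation alone therefore does not dispose of the discreteness. (Also, you want the largest, not the smallest, $n$ with $\mathrm{TV}_n\ge\epsilon$.)

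The step can be repaired, and interpolation is the right device: it moves the discretization loss from the leading exponent (which is why your first option was inadequate) onto the correction term. Let $L_n=\log(1/\mathrm{TV}_n)$ and $\Phi(L)=L/\log L$, and take $n$ maximal with $\mathrm{TV}_n\ge\epsilon$, so that $L_n\le\log(1/\epsilon)<L_{n+1}$. If $c$ is the constant in Theorem~\ref{theorem:sharp}, you get $H\gtrsim\epsilon\,e^{c\Phi(L_n)}$. The target is $\epsilon\,e^{0.33\Phi(\log(1/\epsilon))}\le\epsilon\,e^{0.33\Phi(L_{n+1})}$. By \eqref{proof:conclusion}, \eqref{definition:Rn}, \eqref{proof:TVn} and the reindexing $n\mapsto 2(n+N_0)+1$, one has $L_n\sim\tfrac12 m\log m$ with $m=2(n+N_0)+1$. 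Hence $L_{n+1}/L_n\to1$ and $\Phi(L_{n+1})/\Phi(L_n)\to1$.

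With $c=0.33$ this only gives the exponent constant $0.33-o(1)$. That falls short of the claim by a factor $\exp\bigl(-o(1)\,\Phi(\log(1/\epsilon))\bigr)$, which need not be bounded below. To obtain the statement as written, you must use Theorem~\ref{theorem:sharp} with some $c\in(0.33,0.332)$, which the remark after it allows. Keeping $c=0.33$ would require $\Phi(L_{n+1})-\Phi(L_n)=O(1)$, i.e.\ $L_{n+1}-L_n=O(\log n)$, and this does not follow from the first-order asymptotics \eqref{proof:conclusion}.
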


The Hellinger bound in Theorem~\ref{theorem:robust} can be applied to empirical Bayes learning of Gaussian means with outliers. To motivate this application, consider the following Gaussian location model with prior $\pi$,
\begin{align*}
    X &\mid \theta \sim N(\theta, I_d), & \theta &\sim \pi.
\end{align*}
Given knowledge of the prior, the (oracle) Bayes estimator under squared error loss is given by the posterior mean,
\begin{align}
    \label{eq:tweedie}
    \widehat{\theta}^\star(X)=X + \frac{\nabla f_\pi(X)}{f_\pi(X)}.
\end{align}
This formula is known as Tweedie's formula \citep{efron2011tweedie}. Without knowledge of $\pi$, an empirical Bayes estimator replaces $f_\pi$ in \eqref{eq:tweedie} by its estimate,
\begin{align*}
    \widehat{\theta}(X) := X + \frac{\nabla \widehat{f}(X)}{\widehat{f}(X)}.
\end{align*}
The regret \citep{saha2020nonparametric, soloff2025multivariate} of $\widehat{\theta}(X)$ is quantified by
\begin{align*}
    \mathbb{E}_{X\sim f_{\pi}}\left\|\widehat{\theta}(X)-\widehat{\theta}^\star(X)\right\|^2,
\end{align*}
which is equal to the Fisher divergence between $f_{\pi}$ and $\widehat{f}$.

In a typical empirical Bayes setting, one has i.i.d. observations generated from $f_{\pi}$. Here, we consider a more general data-generating process in \eqref{eq:huber-density} that allows the presence of arbitrary outliers. This requires the estimator $\widehat{f}$ to be robust, and thus the Yatracos' estimator satisfying the risk bound in Theorem~\ref{theorem:robust} is adopted here.

We note that the clean data setting of the problem with $\epsilon = 0$ has been well studied in the literature \citep{james1961estimation, efron1972empirical, efron1973stein, johnstone2002function, ignatiadis2025empirical}, and the nonparametric maximum likelihood estimator (NPMLE) and sieve MLE are shown to achieve the parametric rate up to some logarithmic factor \citep{wong1995probability, genovese2000rates, ghosal2001entropies, jiang2009general, saha2020nonparametric, soloff2025multivariate}. However, when $\epsilon>0$, it is unclear whether the NPMLE still works in the presence of arbitrary outliers. We conjecture that the error rate of the NPMLE has a highly sub-optimal dependence on $\epsilon$.

In terms of techniques for analyzing the regret bound, results in \citet{jiang2009general, saha2020nonparametric, soloff2025multivariate} and related works crucially rely on controlling the Fisher divergence via the Hellinger distance. See, for instance, Theorem E.1 of \citet{saha2020nonparametric}. These works employ a regularized version of $\widehat{\theta}(X)$ to avoid numerical instability when the denominator is close to zero:
\begin{align}
    \label{eq:EB-regular}
    \widehat{\theta}_\rho(X) := X + \frac{\nabla \widehat{f}(X)}{\widehat{f}(X) \vee \rho}.
\end{align}
Following the same strategy, Theorem~\ref{theorem:regret} is an immediate consequence of Theorem~\ref{theorem:robust}.

\begin{theorem}[Robust regret bound]
    \label{theorem:regret}
    Consider the data-generating process in \eqref{eq:huber-density}. Suppose $\widehat \theta^\star(\cdot)$ is as in \eqref{eq:tweedie}.
    Then, there exists $\rho = \rho(\epsilon, n) > 0$ such that
    $\widehat \theta_\rho(\cdot)$ in \eqref{eq:EB-regular} with $\widehat f$ being the Yatracos' estimator satisfies
    \begin{align}
    \label{definition:regret}
        \sup_{\pi, Q}\mathbb{E}\left[ \mathbb{E}_{X\sim f_{\pi}}\left\|\widehat{\theta}_\rho(X)-\widehat{\theta}^\star(X)\right\|^2\right] \lesssim \mathcal E^2(\epsilon, n),
    \end{align}
    where the outer expectation is under \eqref{eq:huber-density}, the supremum is taken over all $Q$ and $\pi$ such that $\mathrm{supp}(\pi) \subseteq [-M, M]^d$, and the error function $\mathcal E^2(\epsilon, n)$ is defined as in \eqref{definition:errorfunction}.
\end{theorem}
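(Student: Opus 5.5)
The plan is to reduce the regret to a Hellinger bound via the standard regularized-Tweedie argument, then plug in Theorem~\ref{theorem:robust}. Since $\widehat f$ (the Yatracos estimator) is itself a Gaussian mixture $f_{\widehat\pi}$ with $\mathrm{supp}(\widehat\pi)\subseteq[-M,M]^d$, the deterministic comparison between the Fisher-type regret of $\widehat\theta_\rho$ and $H^2(f_\pi,\widehat f)$ applies. This is Theorem E.1 of \citet{saha2020nonparametric}, or its analogue in \citet{jiang2009general, soloff2025multivariate}. For $0<\rho\le e^{-1}$ (say) that comparison has the form
\begin{align*}
\mathbb{E}_{X\sim f_\pi}\left\|\widehat\theta_\rho(X)-\widehat\theta^\star(X)\right\|^2 \lesssim \mathrm{polylog}(1/\rho)\cdot\max\left\{H^2(f_\pi,\widehat f),\,\rho\right\} + (\text{tail term in }\rho),
\end{align*}
with constants depending on $M$ and $d$.

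I would take this inequality as a black box, applying it conditionally on the data. That is legitimate because it holds for any pair of mixtures with compactly supported priors, not only for estimators built from clean data. The contamination enters only through the distribution of $\widehat f$.

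The key steps, in order:
\begin{enumerate}[label=(\roman*)]
\item Apply the deterministic inequality pointwise in the sample.
\item Take expectations under \eqref{eq:huber-density}.
\item Bound $\mathbb E[H^2(f_\pi,\widehat f)]\lesssim\mathcal E^2(\epsilon,n)$ by Theorem~\ref{theorem:robust}, uniformly over $\pi,Q$.
\item Choose $\rho=\rho(\epsilon,n)\asymp \mathcal E^2(\epsilon,n)$, or a polynomial in it, so that the $\rho$-terms are of the same order.
\end{enumerate}
The polylogarithmic factor $\mathrm{polylog}(1/\mathcal E^2)$ then remains. It is absorbed into the exponents: $\epsilon^{2(1-(2+\delta)/\log\log(1/\epsilon))}\log^{C}(1/\epsilon)$ is bounded by the same expression with $\delta$ replaced by a larger $\delta'$, because $\log^C(1/\epsilon)=\epsilon^{-o(1/\log\log(1/\epsilon))}$. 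Similarly, $n^{-(1-o_d(1))}\log^C n$ is again of the form $n^{-(1-o_d(1))}$.

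The main obstacle is the absorption in step (iv), because of the mixed form of the bound. Concavity-type issues arise: the map $x\mapsto x\log^C(1/x)$ must be applied after Jensen, and one has to make sure the regret bound holds in expectation and not merely for fixed $H$. I would handle this by using the version of the comparison that is increasing and concave in $H^2$ for small arguments, so that Jensen gives $\mathbb E[\Phi(H^2)]\le\Phi(\mathbb E H^2)$. Failing that, I would split on the event $\{H^2\le \mathcal E^2\log(1/\mathcal E)\}$, using a Markov bound and the trivial bound on the regret, which is $O(\mathrm{polylog}(1/\rho))$ since both score functions are bounded by $\sqrt{d}M$ plus a logarithmic factor.

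Since $\delta>0$ is arbitrary in Theorem~\ref{theorem:robust}, the absorbed $\delta'$ can be any prescribed value. The resulting bound is then exactly \eqref{definition:regret}.
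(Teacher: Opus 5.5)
Your proposal is correct and follows essentially the same route as the paper. The paper applies the Saha--Guntuboyina comparison: Theorem E.1 for the regularized-versus-regularized part, plus their Lemma 4.3 for the oracle regularization term on $\{f_\pi<\rho\}$, which is your ``tail term in $\rho$''. It then uses Theorem~\ref{theorem:robust} with $\rho\asymp\mathcal E^2(\epsilon,n)$ and absorbs the resulting $\log^{3\vee d}$ factors into $\delta$ and $o_d(1)$, as you do.

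One remark on the expectation step. In your stated form it is immediate, since $\max\{H^2,\rho\}\le H^2+\rho$ and $\rho$ is deterministic; the paper instead splits $\mathbb E[H^2\log(1/H)]$ on $\{H\le\mathcal E\}$. Your fallback, however, does not work: Markov gives only $\mathbb P(\text{bad event})\lesssim 1/\log(1/\mathcal E)$, so multiplying by the crude $O(\log(1/\rho))$ regret bound leaves an $O(1)$ term, not $O(\mathcal E^2)$. Also, the oracle score $\mathbb E[\theta\mid x]-x$ is not pointwise bounded; only its second moment under $f_\pi$ is.
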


\begin{remark}
    Recall that the Yatracos' estimator is proper, meaning that $\widehat f = f_{\widehat \pi}$ for some estimated mixing distribution $\widehat \pi$. In other words, the estimator naturally induces an estimator of the prior as well. Consequently, the above construction may also be interpreted as a form of G-modeling estimation in the sense of \citet{efron2014two}.
\end{remark}

\begin{remark}
    The dependence of the tuning parameter $\rho(\epsilon, n)$ on $\epsilon$ can be alleviated by the standard Lepskii's method \citep{lepskii1991problem, lepskii1992asymptotically} to achieve adaptive estimation when $\epsilon$ is unknown.
\end{remark}

See Appendix~\ref{appendix:regret} for detailed proofs of 
Theorem~\ref{theorem:learninginTV}, Proposition~\ref{proposition:yatracos}, Theorems~\ref{theorem:robust}, \ref{theorem:robustlower}, and \ref{theorem:regret}.

\section{Discussion}
\label{section:discussion}
We establish a sharp relation between the total variation and Hellinger distances in this paper. Our results are derived for $d$-dimensional isotropic Gaussian mixture models with fixed covariance $I_d$. While we discuss implications for empirical Bayes methods, these procedures often involve a joint prior on both location and covariance. Extending our results to heteroscedastic Gaussian mixtures is an interesting direction for future work. In addition, establishing a sharp connection between the total variation distance and the Fisher divergence would further deepen the understanding of empirical Bayes procedures in robust settings.

The scope of this paper is restricted to compactly supported Gaussian mixtures. It would be an interesting direction for future work to study the exact relation between total variation and Hellinger distances for sub-Gaussian mixing distributions as well as more general tail behaviors.

Another open problem closely related to this work is the sharp relation between the total variation and $L^2$ distances. Resolving this question would have direct implications for nonparametric density estimation under the $L^2$ loss.

We also note that Theorems~\ref{theorem:robust} and \ref{theorem:robustlower} establish the minimax-optimal rate of robust density estimation in squared Hellinger distance with respect to the contamination level $\epsilon$. However, the optimal dependence on the sample size $n$ remains open. It is worth emphasizing that this is already a long-standing open problem even in the classical setting with $d = 1$ and $\epsilon = 0$ \citep{polyanskiy2021sharp}.

\section*{Acknowledgements}
The authors thank Nikolaos Ignatiadis for fruitful discussions on the implications of the paper’s results for empirical Bayes methods.
The authors also thank the four anonymous reviewers of ICML 2026 for their insightful feedback.

\vskip 0.2in
\bibliographystyle{apalike}
\bibliography{References}

\begin{thebibliography}{}

\bibitem[Birg{\'e}, 1983]{birge1983approximation}
Birg{\'e}, L. (1983).
\newblock Approximation dans les espaces m{\'e}triques et th{\'e}orie de l'estimation.
\newblock {\em Zeitschrift f{\"u}r Wahrscheinlichkeitstheorie und verwandte Gebiete}, 65(2):181--237.

\bibitem[Birg{\'e}, 1986]{birge1986estimating}
Birg{\'e}, L. (1986).
\newblock On estimating a density using {H}ellinger distance and some other strange facts.
\newblock {\em Probability theory and related fields}, 71(2):271--291.

\bibitem[Chen et~al., 2018]{chen2018robust}
Chen, M., Gao, C., and Ren, Z. (2018).
\newblock Robust covariance and scatter matrix estimation under {H}uber’s contamination model.
\newblock {\em The Annals of Statistics}, 46(5):1932--1960.

\bibitem[Dasgupta, 1999]{dasgupta1999learning}
Dasgupta, S. (1999).
\newblock Learning mixtures of {G}aussians.
\newblock In {\em 40th annual symposium on foundations of computer science (Cat. No. 99CB37039)}, pages 634--644. IEEE.

\bibitem[Efron, 2011]{efron2011tweedie}
Efron, B. (2011).
\newblock Tweedie’s formula and selection bias.
\newblock {\em Journal of the American Statistical Association}, 106(496):1602--1614.

\bibitem[Efron, 2014]{efron2014two}
Efron, B. (2014).
\newblock Two modeling strategies for empirical {B}ayes estimation.
\newblock {\em Statistical science: a review journal of the Institute of Mathematical Statistics}, 29(2):285.

\bibitem[Efron and Morris, 1972]{efron1972empirical}
Efron, B. and Morris, C. (1972).
\newblock Empirical {B}ayes on vector observations: An extension of {S}tein's method.
\newblock {\em Biometrika}, 59(2):335--347.

\bibitem[Efron and Morris, 1973]{efron1973stein}
Efron, B. and Morris, C. (1973).
\newblock Stein's estimation rule and its competitors—an empirical {B}ayes approach.
\newblock {\em Journal of the American Statistical Association}, 68(341):117--130.

\bibitem[Gautschi, 1974]{gautschi1974norm}
Gautschi, W. (1974).
\newblock Norm estimates for inverses of {V}andermonde matrices.
\newblock {\em Numerische Mathematik}, 23(4):337--347.

\bibitem[Genovese and Wasserman, 2000]{genovese2000rates}
Genovese, C.~R. and Wasserman, L. (2000).
\newblock Rates of convergence for the {G}aussian mixture sieve.
\newblock {\em The Annals of Statistics}, 28(4):1105--1127.

\bibitem[Ghosal and Van~der Vaart, 2001]{ghosal2001entropies}
Ghosal, S. and Van~der Vaart, A.~W. (2001).
\newblock Entropies and rates of convergence for maximum likelihood and {B}ayes estimation for mixtures of normal densities.
\newblock {\em The Annals of Statistics}, pages 1233--1263.

\bibitem[Guillemin and Sternberg, 2013]{guillemin2013semi}
Guillemin, V. and Sternberg, S. (2013).
\newblock {\em Semi-classical analysis}.
\newblock International Press Boston, MA.

\bibitem[Haussler and Opper, 1997]{haussler1997mutual}
Haussler, D. and Opper, M. (1997).
\newblock Mutual information, metric entropy and cumulative relative entropy risk.
\newblock {\em The Annals of Statistics}, 25(6):2451--2492.

\bibitem[Huber, 1964]{huber1964robust}
Huber, P.~J. (1964).
\newblock Robust estimation of a location parameter.
\newblock {\em The Annals of Mathematical Statistics}, 35(1):73--101.

\bibitem[Humbert et~al., 2022]{humbert2022robust}
Humbert, P., Le~Bars, B., and Minvielle, L. (2022).
\newblock Robust kernel density estimation with median-of-means principle.
\newblock In {\em International Conference on Machine Learning}, pages 9444--9465. PMLR.

\bibitem[Ignatiadis and Sen, 2025]{ignatiadis2025empirical}
Ignatiadis, N. and Sen, B. (2025).
\newblock Empirical partially {B}ayes multiple testing and compound {$\chi^2$} decisions.
\newblock {\em The Annals of Statistics}, 53(1):1--36.

\bibitem[James et~al., 1961]{james1961estimation}
James, W., Stein, C., et~al. (1961).
\newblock Estimation with quadratic loss.
\newblock In {\em Proceedings of the fourth Berkeley symposium on mathematical statistics and probability}, volume~1, pages 361--379. University of California Press.

\bibitem[Jia et~al., 2023]{jia2023entropic}
Jia, Z., Polyanskiy, Y., and Wu, Y. (2023).
\newblock Entropic characterization of optimal rates for learning {G}aussian mixtures.
\newblock In {\em The Thirty Sixth Annual Conference on Learning Theory}, pages 4296--4335. PMLR.

\bibitem[Jiang and Zhang, 2009]{jiang2009general}
Jiang, W. and Zhang, C.-H. (2009).
\newblock General maximum likelihood empirical {B}ayes estimation of normal means.
\newblock {\em The Annals of Statistics}, pages 1647--1684.

\bibitem[Johnstone, 2002]{johnstone2002function}
Johnstone, I.~M. (2002).
\newblock Function estimation and {G}aussian sequence models.
\newblock {\em Unpublished manuscript}, 2(5.3):2.

\bibitem[Kim and Guntuboyina, 2022]{kim2022minimax}
Kim, A.~K. and Guntuboyina, A. (2022).
\newblock Minimax bounds for estimating multivariate {G}aussian location mixtures.
\newblock {\em Electronic Journal of Statistics}, 16(1):1461--1484.

\bibitem[LeCam, 1973]{lecam1973convergence}
LeCam, L. (1973).
\newblock Convergence of estimates under dimensionality restrictions.
\newblock {\em The Annals of Statistics}, pages 38--53.

\bibitem[Lepskii, 1991]{lepskii1991problem}
Lepskii, O. (1991).
\newblock On a problem of adaptive estimation in {G}aussian white noise.
\newblock {\em Theory of Probability \& Its Applications}, 35(3):454--466.

\bibitem[Lepskii, 1992]{lepskii1992asymptotically}
Lepskii, O. (1992).
\newblock Asymptotically minimax adaptive estimation. i: Upper bounds. {O}ptimally adaptive estimates.
\newblock {\em Theory of Probability \& Its Applications}, 36(4):682--697.

\bibitem[Lindsay, 1995]{lindsay1995mixture}
Lindsay, B.~G. (1995).
\newblock Mixture models: Theory, geometry and applications.
\newblock In {\em NSF-CBMS Regional Conference Series in Probability and Statistics}, pages i--163. JSTOR.

\bibitem[Liu and Gao, 2019]{liu2019density}
Liu, H. and Gao, C. (2019).
\newblock Density estimation with contamination: minimax rates and theory of adaptation.
\newblock {\em Electronic Journal of Statistics}, 13:3613--3653.

\bibitem[Lubinsky, 2007]{lubinsky2007survey}
Lubinsky, D.~S. (2007).
\newblock A survey of weighted approximation for exponential weights.
\newblock {\em arXiv preprint math/0701099}.

\bibitem[Ma et~al., 2025]{ma2025best}
Ma, Y., Wu, Y., and Yang, P. (2025).
\newblock On the best approximation by finite {G}aussian mixtures.
\newblock {\em IEEE Transactions on Information Theory}.

\bibitem[Maizlish and Prymak, 2015]{maizlish2015convex}
Maizlish, O. and Prymak, A. (2015).
\newblock Convex polynomial approximation in {$\mathbb R^d$} with {F}reud weights.
\newblock {\em Journal of Approximation Theory}, 192:60--68.

\bibitem[Nevai and Totik, 1987]{nevai1987sharp}
Nevai, P. and Totik, V. (1987).
\newblock Sharp {N}ikolskii inequalities with exponential weights.
\newblock {\em Analysis Mathematica}, 13(4):261--267.

\bibitem[Polyanskiy and Wu, 2021]{polyanskiy2021sharp}
Polyanskiy, Y. and Wu, Y. (2021).
\newblock Sharp regret bounds for empirical {B}ayes and compound decision problems.
\newblock {\em arXiv preprint arXiv:2109.03943}.

\bibitem[Polyanskiy and Wu, 2025]{polyanskiy2025information}
Polyanskiy, Y. and Wu, Y. (2025).
\newblock {\em Information theory: From coding to learning}.
\newblock Cambridge university press.

\bibitem[Saha and Guntuboyina, 2020]{saha2020nonparametric}
Saha, S. and Guntuboyina, A. (2020).
\newblock On the nonparametric maximum likelihood estimator for {G}aussian location mixture densities with application to {G}aussian denoising.
\newblock {\em The Annals of Statistics}, 48(2):738--762.

\bibitem[Soloff et~al., 2025]{soloff2025multivariate}
Soloff, J.~A., Guntuboyina, A., and Sen, B. (2025).
\newblock Multivariate, heteroscedastic empirical {B}ayes via nonparametric maximum likelihood.
\newblock {\em Journal of the Royal Statistical Society Series B: Statistical Methodology}, 87(1):1--32.

\bibitem[Szeg, 1939]{szeg1939orthogonal}
Szeg, G. (1939).
\newblock {\em Orthogonal polynomials}, volume~23.
\newblock American Mathematical Soc.

\bibitem[Watson, 1933]{watson1933notes}
Watson, G.~N. (1933).
\newblock Notes on generating functions of polynomials:(2) {H}ermite polynomials.
\newblock {\em Journal of the London Mathematical Society}, 1(3):194--199.

\bibitem[Wong and Shen, 1995]{wong1995probability}
Wong, W.~H. and Shen, X. (1995).
\newblock Probability inequalities for likelihood ratios and convergence rates of sieve {MLE}s.
\newblock {\em The Annals of Statistics}, pages 339--362.

\bibitem[Yang and Barron, 1999]{yang1999information}
Yang, Y. and Barron, A. (1999).
\newblock Information-theoretic determination of minimax rates of convergence.
\newblock {\em The Annals of Statistics}, pages 1564--1599.

\bibitem[Yatracos, 1985]{yatracos1985rates}
Yatracos, Y.~G. (1985).
\newblock Rates of convergence of minimum distance estimators and {K}olmogorov's entropy.
\newblock {\em The Annals of Statistics}, 13(2):768--774.

\bibitem[Zhang and Ren, 2023]{zhang2023adaptive}
Zhang, P. and Ren, Z. (2023).
\newblock Adaptive minimax density estimation on {$\mathbb R^d$} for {H}uber’s contamination model.
\newblock {\em Information and Inference: A Journal of the IMA}, 12(4):3042--3066.

\end{thebibliography}

\newpage
\appendix

\section{Proof of the Main Results}
\label{appendix:main}

\subsection{Preliminaries: Hermite Polynomials and Inequalities}
\label{appendix:mainprelim}
This section has two main goals. The first is to develop an understanding of the Hilbert space $L^2(\mathbb R^d, \phi_d)$ through the Christoffel-Darboux (C-D) kernel (Proposition~\ref{proposition:chriskernel}), which paves the way for the proofs of the Nikolskii-type inequality (Proposition~\ref{proposition:nikolskii}) and restricted-range inequality (Proposition~\ref{proposition:restricted}). The second is to prove Proposition~\ref{proposition:onenormhighdim}, which is a key ingredient in the proof of our main result, Theorem~\ref{theorem:uniformnorm}.

The results in this section have important implications in quantum mechanics. However, we postpone their physical interpretation for the moment. We first proceed to prove Proposition~\ref{proposition:onenormhighdim} and Theorem~\ref{theorem:uniformnorm} without relying on physical intuition, and then return to discuss the physical meaning at the end.

The study of orthogonal polynomials has a long and rich history, ranging from the classical work of \citet{szeg1939orthogonal} to the survey of \citet{lubinsky2007survey}, among many others.
Results on multivariate polynomials are relatively limited and scattered across diverse literatures, including theoretical mathematics and quantum physics, making a unified overview challenging. For the sake of keeping the present paper self-contained, we summarize the essential results in this section. We adopt the notation introduced in Section~\ref{section:notation} and fix $d \geq 1$ throughout.

\begin{lemma}[Hermite polynomial expansion]
    \label{lemma:expansion}
    For $\theta = (\theta_1, \ldots, \theta_d) \in \mathbb R^d$ and $x = (x_1, \ldots, x_d) \in \mathbb R^d$, we have
    \begin{align*}
        \frac{\phi_d(x-\theta)}{\phi_d(x)} &= \sum_{\mathbf k \in \mathbb N_0^d} \frac{\theta^{\mathbf k}}{\sqrt{\mathbf k!}}h_{\mathbf k}(x),
    \end{align*}
    where we define
    \begin{align*}
        \theta^{\mathbf k} &:= \prod_{j=1}^d \theta_j^{k_j}, &
        \mathbf k! &:= \prod_{j=1}^d k_j!.
    \end{align*}
    \begin{proof}
        The one-dimensional version of this identity is classical and easy to show. See, for example, Equation (5.5.7) of \citet{szeg1939orthogonal}. We can extend it to arbitrary dimensions as follows.
        \begin{align*}
            \frac{\phi_d(x-\theta)}{\phi_d(x)} &= \exp\left(\langle \theta, x \rangle_2 - \frac{1}{2}\norm{\theta}_2^2\right)
            \\ &= \prod_{j=1}^d \exp\left(\theta_jx_j-\frac{1}{2}\theta_j^2\right)
            \\ &= \prod_{j=1}^d \sum_{k_j=0}^\infty \frac{\theta_j^{k_j}}{\sqrt{k_j!}}h_{k_j}(x_j).
        \end{align*}
        Expand the product to complete the proof.
    \end{proof}
\end{lemma}

\begin{proposition}[Christoffel-Darboux kernel]
    \label{proposition:chriskernel}
    For $n \in \mathbb N_0$, define the $n$-th Christoffel-Darboux kernel $K_n$ by
    \begin{align}
        K_n(x, y) := \sum_{|\mathbf k| \leq n} h_{\mathbf k}(x)h_{\mathbf k}(y).
        \label{definition:chriskernel}
    \end{align}
    Then, for every $x \in \mathbb R^d$,
    \begin{enumerate}
        \item $K_n(x, \cdot) \in \Pi_{n}^d$.
        \item $\langle f, K_n(x, \cdot)\rangle_{L^2(\mathbb R^d, \phi_d)} = f(x)$ holds for all $f \in \Pi_{n}^d$.
    \end{enumerate}
    \begin{proof}
        The first statement is obvious.
        Due to linearity, it suffices to prove the second statement when $f = h_{\mathbf k}$ for some $|\mathbf k| \leq n$, which follows immediately from the orthonormality of the Hermite basis.
    \end{proof}
\end{proposition}

\begin{proposition}[Christoffel-Darboux function]
    \label{proposition:chrisfunction}
    For every $x \in \mathbb R^d$,
    \begin{align}
        \label{eq:CDfunc}
        \inf\left\{\norm{P}_{L^2(\mathbb R^d, \phi_d)}^2: P \in \Pi_n^d, P(x) = 1\right\} = \frac{1}{K_n(x, x)}.
    \end{align}
    \begin{proof}
        For $P \in \Pi_n^d$ such that $P(x) = 1$, write $P = \sum_{|\mathbf k| \leq n} c_{\mathbf k}h_{\mathbf k}$ so that
        \begin{align*}
            1 = \left(\sum_{|\mathbf k|\leq n} c_{\mathbf k}h_{\mathbf k}(x)\right)^2 \leq \left(\sum_{|\mathbf k| \leq n}c_{\mathbf k}^2\right)\left(\sum_{|\mathbf k| \leq n}h_{\mathbf k}^2(x)\right) = \norm{P}_{L^2(\mathbb R^d, \phi_d)}^2 K_n(x, x),
        \end{align*}
        which proves the lower bound. The lower bound is attained by the polynomial $\frac{K_n(x,\cdot)}{K_n(x,x)} \in \Pi_n^d$ due to the reproducing property of the C-D kernel (Proposition~\ref{proposition:chriskernel}).
    \end{proof}
\end{proposition}

In view of Proposition~\ref{proposition:chrisfunction}, it is important to study an upper bound on the diagonal entries $K_n(x,x)$ of the C–D kernel. To this end, we introduce a useful lemma.

\begin{lemma}[Mehler's formula]
    \label{lemma:mehler}
    For $\mathbf k \in \mathbb N_0^d$, define $E_{\mathbf k} := 2|\mathbf k| + d$.
    For $x, y \in \mathbb R^d$ and $t > 0$, define the Mehler kernel by
    \begin{align}
    \label{eq:mehler}
        M(x, y; t) := \sum_{\mathbf k \in \mathbb N_0^d} e^{-tE_{\mathbf k}}h_{\mathbf k}(x)h_{\mathbf k}(y)\phi_d^{1/2}(x)\phi_d^{1/2}(y).
    \end{align}
    Then, we have the following closed-form formula:
    \begin{align}
    \label{proof:mehler}
        M(x, y; t) = \left(4\pi\sinh(2t)\right)^{-d/2}\exp\left(
        -\frac{\norm{x}_2^2+\norm{y}_2^2}{4\tanh(2t)}
        +\frac{\langle x, y \rangle_{2}}{2\sinh(2t)}
        \right).
    \end{align}
    If $y = x$, in particular, then
    \begin{align}
    \label{proof:mehlerdiagonal}
        M(x, x; t) = \left(4\pi\sinh(2t)\right)^{-d/2}\exp\left(
        -\frac{\norm{x}_2^2}{2} \tanh(t)
        \right).
    \end{align}
    \begin{proof}
        The right hand side of \eqref{eq:mehler} factorizes as
        \begin{align*}
            \prod_{j=1}^d \sum_{k_j \in \mathbb N_0} e^{-t(2k_j+1)}h_{k_j}(x_j)h_{k_j}(y_j)\phi_1^{1/2}(x_j)\phi_1^{1/2}(y_j).
        \end{align*}
        Since the closed-form formula \eqref{proof:mehler} admits the same factorization, it suffices to show \eqref{proof:mehler} only for $d = 1$. There are many known proofs of the one-dimensional Mehler's formula. One such proof dates back (at least) to \citet{watson1933notes}. Since it is quite short, we include it below.
        Recall the Fourier transform of $\phi_1$:
        \begin{align*}
            \phi_1(x) = \frac{1}{2\pi}\int \exp\left(-\frac{\xi^2}{2}+ix\xi\right) \, d\xi.
        \end{align*}
        Hence, from the definition of $h_k$,
        \begin{align*}
            h_k(x)\phi_1^{1/2}(x) &= \frac{(-1)^k}{\sqrt{k!}} \phi_1^{-1/2}(x)\frac{d^k}{dx^k}\phi_1(x)
            \\ &= \frac{1}{2\pi\sqrt{k!}} \phi_1^{-1/2}(x)\int (-i\xi)^k\exp\left(-\frac{\xi^2}{2}+ix\xi\right) \, d\xi.
        \end{align*}
        In conclusion,
        \begin{align*}
            &\sum_{k=0}^\infty e^{-t(2k+1)}h_{k}(x)h_{k}(y)\phi_1^{1/2}(x)\phi_1^{1/2}(y)
            \\ &= (2\pi)^{-3/2}\exp\left(-t+\frac{x^2+y^2}{4}\right)\iint \exp\left(-\frac{\xi^2+\zeta^2}{2}+ix\xi+iy\zeta\right)\sum_{k=0}^\infty \frac{(-e^{-2t}\xi\zeta)^k}{k!} \, d\xi d\zeta
            \\ &= (2\pi)^{-3/2}\exp\left(-t+\frac{x^2+y^2}{4}\right)\iint \exp\left(-\frac{\xi^2+\zeta^2}{2}-e^{-2t}\xi\zeta+ix\xi+iy\zeta\right) \, d\xi d\zeta
            \\ &= (2\pi(1-e^{-4t}))^{-1/2}\exp\left(-t+\frac{x^2+y^2}{4}
            -\frac{x^2+y^2-2e^{-2t}xy}{2(1-e^{-4t})}
            \right)
            \\ &= (4\pi\sinh(2t))^{-1/2}\exp\left(
            -\frac{x^2+y^2}{4\tanh(2t)}
            +\frac{xy}{2\sinh(2t)}
            \right).
        \end{align*}
        Absolute convergence justifies exchanging the summation and integration. We have derived the explicit form of the Mehler's formula, which implies the following corollary.
    \end{proof}
\end{lemma}

\begin{corollary}[Upper bounds of the C-D kernel]
    \label{corollary:upperCD}
    Recall the definition \eqref{definition:chriskernel} of Christoffel-Darboux kernel $K_{n}(x,x)$. For $n \in \mathbb N_0$, define
    \begin{align}
        \label{definition:energy}
        E_{n,d} &:= 2n+d, & C_{n,d} &:= \left(\frac{(n+d)^{n+d}}{n^nd^d}\right)^{1/2}.
    \end{align}
    Then, we have
    \begin{align}
        \sup_{x \in \mathbb R^d} K_n(x, x)\phi_d(x) &\leq (2\pi)^{-d/2}C_{n,d},
        \label{eq:kernelglobal} \\
        C_{n,d} &\leq \left(\frac{e(n+d)}{d}\right)^{d/2} = O(n^{d/2}).
        \label{eq:upperC}
    \end{align}
    Furthermore, for $\kappa > 1$,
    \begin{align}
        \label{eq:kerneltail}
        \int_{\norm{x}_2 > \sqrt{2\kappa E_{n,d}}} K_n(x, x)\phi_d(x) &\leq \left(\frac{e}{2d}\sqrt{\frac{\kappa}{\kappa-1}}\right)^{d/2}E_{n,d}^{d/2}\exp\left(-c(\kappa)E_{n,d}\right),
    \end{align}
    where we define $c(\kappa) := \sqrt{\kappa(\kappa-1)}-\log\left(\sqrt{\kappa}+\sqrt{\kappa-1}\right) > 0$.
    \begin{proof}
        The inequality \eqref{eq:upperC} is straightforward.
        The other inequalities \eqref{eq:kernelglobal} and \eqref{eq:kerneltail} can be derived from the Chernoff bound using the Mehler’s formula (Lemma~\ref{lemma:mehler}) as follows.
        For all $x\in \mathbb R^d$ and $t > 0$,
        \begin{align*}
            K_n(x, x)\phi_d(x) &= \sum_{|\mathbf k| \leq n} h_{\mathbf k}^2(x)\phi_d(x)
            \tag{by \eqref{definition:chriskernel}}
            \\ &\leq e^{tE_{n,d}} \sum_{|\mathbf k| \leq n} e^{-tE_{\mathbf k}}h_{\mathbf k}^2(x)\phi_d(x)
            \tag{$E_{\mathbf k} \leq E_{n,d}$}
            \\ &\leq e^{tE_{n,d}} M(x, x; t)
            \tag{by \eqref{eq:mehler}}
            \\ &= e^{tE_{n,d}} (4\pi \sinh(2t))^{-d/2}\exp\left(-\frac{\norm{x}_2^2}{2}\tanh(t)\right).
            \tag{by \eqref{proof:mehlerdiagonal}}
        \end{align*}
        Therefore,
        \begin{align*}
            \sup_{x \in \mathbb R^d} K_n(x, x)\phi_d(x) \leq \inf_{t > 0}e^{tE_{n,d}} (4\pi \sinh(2t))^{-d/2} = (2\pi)^{-d/2}C_{n,d},
        \end{align*}
        where the infimum is attained at $t = \frac{1}{4}\log\left(1+\frac{d}{n}\right)$.
        Similarly,
        for all $t > 0$ and $0 < s < \tanh(t)$,
        \begin{align*}
            &\int_{\norm{x}_2 > \sqrt{2\kappa E_{n,d}}} K_n(x, x) \phi_d(x)
            \\ &\leq e^{tE_{n,d}}(4\pi \sinh(2t))^{-d/2}\int_{\norm{x}_2 > \sqrt{2\kappa E_{n,d}}} \exp\left(-\frac{\norm{x}_2^2}{2}\tanh(t)\right)
            \\ &\leq \exp\left(\left(t-\kappa s\right)E_{n,d}\right)(4\pi \sinh(2t))^{-d/2}
            \int_{\mathbb R^d} \exp\left(-\frac{\norm{x}_2^2}{2}(\tanh(t)-s)\right)
            \\ &= \exp\left(\left(t-\kappa s\right)E_{n,d}\right)\left(2 \sinh(2t)(\tanh(t)-s)\right)^{-d/2}.
        \end{align*}
        Now fix $t = \log\left(\sqrt{\kappa}+\sqrt{\kappa-1}\right) > 0$ so that $\cosh(t) = \sqrt{\kappa}$ and that $\sinh(t) = \sqrt{\kappa-1}$. Choose $s = \tanh(t)-\frac{d}{2\kappa E_{n,d}}$ so that
        \begin{align*}
            \int_{\norm{x}_2 > \sqrt{2\kappa E_{n,d}}} K_n(x, x) \phi_d(x)
            &\leq \exp\left(\frac{d}{2}-c(\kappa)E_{n,d}\right)
            \left(\frac{2d\sqrt{\kappa(\kappa-1)}}{\kappa E_{n,d}}\right)^{-d/2},
        \end{align*}
        which is the desired result. Note that the choice of $(t, s)$ is asymptotically optimal as $E_{n,d} \to \infty$.
    \end{proof}
\end{corollary}

We have derived upper bounds on the diagonal entries $K_n(x, x)$ of the C-D kernel.
Using these bounds, we now present three norm inequalities in $\Pi_n^d$, stated as Propositions~\ref{proposition:nikolskii}, \ref{proposition:restricted}, and \ref{proposition:onenormhighdim}.

The first is the Nikolskii-type inequality. In case $d = 1$, the Nikolskii-type inequality has been extensively studied. For instance, the paper by \citet{nevai1987sharp} focuses on the one-dimensional setting and establishes the sharpness of the Nikolskii-type inequalities (with more general weight functions). Note that the Mhaskar–Rakhmanov–Saff (MRS) number $a_n$ discussed in that paper is linearly comparable to $\sqrt{2E_{n,d}}$, the threshold.

The second is the restricted-range inequality. Similarly, in the one-dimensional setting, the restricted-range inequality has been studied in great depth; see Chapter 6 of the survey \citet{lubinsky2007survey}. In higher dimensions, a few results are known as well; for example, see Lemma 5 of \citet{maizlish2015convex}.

The third, to the best of our knowledge, does not have a standard name. It can, however, be derived as a combination of the preceding two and will play an essential role in our main result.

\begin{proposition}[Nikolskii-type inequality]
    \label{proposition:nikolskii}
    Recall the definition \eqref{definition:energy} of $C_{n,d}$.
    For all $P \in \Pi_n^d$, we have
    \begin{align*}
        \sup_{x \in \mathbb R^d} \left|P(x)\phi_d^{1/2}(x)\right| \leq (2\pi)^{-d/4}C_{n,d}^{1/2}\norm{P}_{L^2(\mathbb R^d, \phi_d)}.
    \end{align*}
    \begin{proof}
        According to Proposition~\ref{proposition:chrisfunction} and Corollary~\ref{corollary:upperCD}, it holds
        for all $x \in \mathbb R^d$ that
        \begin{align*}
            P^2(x)\phi_d(x) &\leq (2\pi)^{-d/2}C_{n,d}\frac{P^2(x)}{K_n(x, x)}
            \tag{by \eqref{eq:kernelglobal}}
            \\ &\leq (2\pi)^{-d/2}C_{n,d}\norm{P}_{L^2(\mathbb R^d, \phi_d)}^2.
            \tag{by \eqref{eq:CDfunc}}
        \end{align*}
        Take square roots of the both sides to conclude the proof.
    \end{proof}
\end{proposition}

\begin{proposition}[Restricted-range inequality]
    \label{proposition:restricted}
    Recall the definition \eqref{definition:energy} of $E_{n,d}$. Suppose $\kappa > 1$.
    Then, there exists a constant $A = A(\kappa)$, depending only on $\kappa$, such that,
    if $E_{n,d} \geq A d$, then, for all $P \in \Pi_n^d$, we have
    \begin{align*}
        \int_{\mathbb R^d} P^2\phi_d \leq 2 \int_{\norm{x}_2 \leq \sqrt{2\kappa E_{n,d}}} P^2(x)\phi_d(x).
    \end{align*}
    \begin{proof}
        Suppose
        \begin{align}
        \label{assumption:energy}
            \frac{E_{n,d}}{d} &\geq \frac{1}{c(\kappa)}\log\left(\frac{e}{c(\kappa)}\sqrt{\frac{\kappa}{\kappa-1}} \vee e\right) =: A(\kappa),
        \end{align}
        where we define $c(\kappa)$ as in Corollary~\ref{corollary:upperCD}.
        For $P \in \Pi_n^d$, write $P = \sum_{|\mathbf k| \leq n} c_{\mathbf k} h_{\mathbf k}$ so that $\int P^2\phi_d = \sum_{|\mathbf k| \leq n} c_{\mathbf k}^2$.
        We have
        \begin{align*}
            \int_{\norm{x}_2 > \sqrt{2\kappa E_{n,d}}} P^2(x)\phi_d(x)
            &= \sum_{|\mathbf k| \leq n} \sum_{|\mathbf l| \leq n} c_{\mathbf k} M_{\mathbf{kl}}c_{\mathbf l},
        \end{align*}
        where we define
        \begin{align*}
            M_{\mathbf{kl}} &:= \int_{\norm{x}_2 > \sqrt{2\kappa E_{n,d}}} h_{\mathbf k}(x)h_{\mathbf l}(x)\phi_d(x).
        \end{align*}
        Here, $M = (M_{\mathbf{kl}})$ is a $(\dim \Pi_n^d) \times (\dim \Pi_n^d)$ positive semi-definite matrix. Thus, its operator norm is bounded by its trace.
        That is,
        \begin{align*}
            \int_{\norm{x}_2 > \sqrt{2\kappa E_{n,d}}} P^2(x)\phi_d(x) &\leq \left(\int_{\mathbb R^d} P^2\phi_d \right)\mathrm{trace}(M).
        \end{align*}
        It suffices to show that the trace is at most $\frac{1}{2}$.
        By the definition \eqref{definition:chriskernel} of Christoffel-Darboux kernel,
        \begin{align}
            \label{eq:trace}
            \mathrm{trace}(M) &= \sum_{|\mathbf k| \leq n} M_{\mathbf{kk}} = \int_{\norm{x}_2 > \sqrt{2\kappa E_{n,d}}} K_n(x, x)\phi_d(x).
        \end{align}
        Note that $z \geq 2 \log(a \vee e)$ implies $az \leq e^z$.
        Thus, the assumption \eqref{assumption:energy} implies
        \begin{align}
            \label{proof:trans}
            \frac{e}{c(\kappa)}\sqrt{\frac{\kappa}{\kappa-1}}\frac{2c(\kappa)}{d}E_{n,d}
            \leq \exp\left(\frac{2c(\kappa)}{d}E_{n,d}\right).
        \end{align}
        In conclusion,
        \begin{align*}
            \mathrm{trace}(M) &\leq
            \left(\frac{e}{2d}\sqrt{\frac{\kappa}{\kappa-1}}E_{n,d}\right)^{d/2}\exp\left(-c(\kappa)E_{n,d}\right)
            \tag{by \eqref{eq:kerneltail} and \eqref{eq:trace}}
            \\ &\leq 2^{-d} \leq \frac{1}{2}.
            \tag{by \eqref{proof:trans}}
        \end{align*}
    \end{proof}
\end{proposition}

The following Proposition~\ref{proposition:onenormhighdim} is simply a combination of Propositions~\ref{proposition:nikolskii} and \ref{proposition:restricted}, and it plays a central role in the proof of our main result.

\begin{proposition}[Asymptotic lower bound of $L^1(\mathbb R^d, \phi_d)$-norm in $\Pi_n^d$]
\label{proposition:onenormhighdim}
    Recall the definition \eqref{definition:energy} of $E_{n,d}$ and $C_{n,d}$.
    Define
    \begin{align}
        \label{definition:chighdim}
        c_{n,d} := \inf\left\{\norm{P}_{L^1(\mathbb R^d, \phi_d)}: P \in \Pi_n^d,
        \norm{P}_{L^2(\mathbb R^d, \phi_d)} = 1\right\}.
    \end{align}
    If the assumption \eqref{assumption:energy} of Proposition~\ref{proposition:restricted} holds, then
    \begin{align}
        \label{eq:smallcinter}
        c_{n,d} &\geq \frac{1}{2}C_{n,d}^{-1/2}e^{-\kappa E_{n,d}/2}.
    \end{align}
    \begin{proof}
        For $P \in \Pi_n^d$,
        \begin{align*}
            \norm{P}_{L^2(\mathbb R^d, \phi_d)}^2
            &\leq 2\int_{\norm{x}_2 \leq \sqrt{2\kappa E_{n,d}}} P^2(x)\phi_d(x)
            \tag{by Proposition~\ref{proposition:restricted}}
            \\ &\leq 2 \sup_{\norm{x}_2 \leq \sqrt{2\kappa E_{n,d}}}\left|\phi_d^{-1/2}(x)\right|
            \sup_{x \in \mathbb R^d} \left|P(x)\phi_d^{1/2}(x)\right|
            \int_{\mathbb R^d} |P\phi_d|
            \\ &\leq 2\left((2\pi)^{d/4}e^{\kappa E_{n,d}/2}\right)
            \left((2\pi)^{-d/4}C_{n,d}^{1/2}\norm{P}_{L^2(\mathbb R^d, \phi_d)}\right)\norm{P}_{L^1(\mathbb R^d, \phi_d)}.
            \tag{by Proposition~\ref{proposition:nikolskii}}
        \end{align*}
        Cancel out $\norm{P}_{L^2(\mathbb R^d, \phi_d)}$ from the both sides to prove the inequality \eqref{eq:smallcinter}.
    \end{proof}
\end{proposition}

\begin{corollary}
    \label{corollary:onenormhighdim}
    Recall the definition \eqref{definition:chighdim} of $c_{n,d}$.
    Suppose $\kappa_1 > 1$. Then, there exists a constant $A_1 = A_1(\kappa_1)$,
    depending only on $\kappa_1$, such that, if $n \geq A_1 d$, then we have
    $c_{n,d} \geq 3e^{-\kappa_1 n}$.
    \begin{proof}
        Suppose
        \begin{align}
            \label{assumption:n}
            \frac{n}{d} &\geq \inf_\kappa \left\{ 1 \vee \frac{A(\kappa)}{2}
            \vee \frac{1}{2(\kappa_1-\kappa)}\log\left(\frac{3^8e^{1+2\kappa}}{2(\kappa_1-\kappa)} \vee e\right) \right\} =: A_1(\kappa_1),
        \end{align}
        where we define $A(\kappa)$ as in \eqref{assumption:energy}, and
        the infimum is taken with respect to $\kappa$ such that $1 < \kappa < \kappa_1$.
        Recall that $z \geq 2 \log(a \vee e)$ implies $az \leq e^z$.
        Thus, the assumption \eqref{assumption:n}
        implies
        \begin{align}
            \frac{3^8e^{1+2\kappa}}{2(\kappa_1-\kappa)}
            \frac{4(\kappa_1-\kappa)}{d}n
            \leq \exp\left(\frac{4(\kappa_1-\kappa)}{d}n\right).
            \label{proof:trant}
        \end{align}
        In conclusion,
        \begin{align*}
            c_{n,d} &\geq \frac{1}{2}C_{n,d}^{-1/2}e^{-\kappa E_{n,d}/2}
            \tag{by \eqref{eq:smallcinter}}
            \\ &\geq \frac{1}{2}\left(\frac{e^{1+2\kappa}(n+d)}{d}\right)^{-d/4}\exp\left(-\kappa n\right)
            \tag{by \eqref{eq:upperC}}
            \\ &\geq \frac{1}{3}\left(\frac{2e^{1+2\kappa}}{d}n\right)^{-d/4}\exp\left(-\kappa n\right)
            \tag{$\because n \geq d$}
            \\ &\geq 3^{2d-1}\exp(-\kappa_1 n)
            \geq 3e^{-\kappa_1 n}.
            \tag{by \eqref{proof:trant}}
        \end{align*}
        Since $E_{n,d} = 2n+d \geq 2n$, the assumption \eqref{assumption:n} also implies the assumption \eqref{assumption:energy} of Proposition~\ref{proposition:restricted}.
    \end{proof}
\end{corollary}

We have derived all the preliminary results required for the proof of our main theorem. Lastly, we introduce one technical lemma to conclude this section.

\begin{lemma}[Lambert W function]
    \label{lemma:lambert}
    Given $\kappa_2 > 1$, $B_0 \geq 1$, and $t \in (0, 1)$, define
    \begin{align}
        \label{definition:w0}
        w_0 &:= 1 \vee \frac{2}{\kappa_2-1}\log\left(\frac{B_0}{\kappa_2-1} \vee e\right), \\
        n_0 &:= \left\lfloor 2B_0 e^{w_0} \vee \frac{2\kappa_2\log(1/t)}{\log\left(\log(1/t) \vee e\right)} \right\rfloor.
        \nonumber
    \end{align}
    Then, it holds for all $n \geq n_0$ that
    \begin{align}
        \label{proof:lambert}
        \left(\frac{2B_0}{n+1}\right)^{(n+1)/2} \leq t.
    \end{align}
    \begin{proof}
        Let $w > 0$ be the unique positive real number such that $\log(1/t) = B_0 we^w$.
        Then,
        \begin{align*}
            \left(\frac{2B_0}{2B_0e^w}\right)^{B_0e^w} = t.
        \end{align*}
        Since the function $z \mapsto (2B_0/z)^{z/2}$ is decreasing for $z > 2B_0/e$,
        it suffices to show $n+1 \geq 2B_0e^w$ to prove the inequality \eqref{proof:lambert}.
        We divide the argument into two cases, (a) $w < w_0$ and (b) $w \geq w_0$.
        In case (a) $w < w_0$, it is obvious that $n+1 \geq n_0+1 \geq 2B_0e^{w_0} \geq 2B_0e^{w}$.
        Hence, we now suppose (b) $w \geq w_0$.
        Recall that $z \geq 2 \log(a \vee e)$ implies $az \leq e^z$.
        Thus, \eqref{definition:w0} implies
        \begin{align}
        \label{proof:w0}
            \frac{B_0}{\kappa_2-1}(\kappa_2-1)w &\leq \exp\left((\kappa_2-1)w\right).
        \end{align}
        Furthermore, since $B_0 \geq 1$ and $w_0 \geq 1$, we have $\log(1/t) = B_0 we^w \geq e$ and
        \begin{align*}
            n+1 \geq \frac{2\kappa_2\log(1/t)}{\log\left(\log(1/t) \vee e\right)}
            = \frac{2\kappa_2B_0 we^w}{\log\left(B_0 we^w\right)} \geq 2B_0 e^w,
        \end{align*}
        where the last inequality follows from \eqref{proof:w0}.
    \end{proof}
\end{lemma}

\subsection{Proof of the Main Theorem}
\label{appendix:mainproof}
We have already shown in the main text that Theorem~\ref{theorem:uniformnorm} implies Theorem~\ref{theorem:uniformTV}.
Therefore, we proceed to prove Theorem~\ref{theorem:uniformnorm} here.

\begin{proof}[Proof of Theorem~\ref{theorem:uniformnorm}]
    Let $\kappa_1 > 1$ and $\kappa_2 > 1$ satisfy $2\kappa_1\kappa_2 = 2+\delta$.
    First, in view of Corollary~\ref{corollary:onenormhighdim},
    there exists a positive integer
    $A_1 = A_1(\kappa_1)$, depending only on $\kappa_1$, such that
    \begin{align}
        \label{proof:nikolskii}
        n \geq A_1 d \Longrightarrow c_{n,d} &\geq 3e^{-\kappa_1 n}.
    \end{align}
    Let $t := \frac{1}{2}\norm{g}_{L^1(\phi_d)} = \mathrm{TV}(f_\pi, f_\eta) \in (0, 1)$. In view of Lemma~\ref{lemma:lambert}, define
    \begin{align}
        \label{definition:n}
        n := A_1 d \vee B \vee \left\lfloor \frac{2\kappa_2\log(1/t)}{\log\left(\log(1/t) \vee e\right)} \right\rfloor \in \mathbb N_0,
    \end{align}
    where
    \begin{align}
        \label{definition:B0}
        B_0 = B_0(\kappa_1, M^2 d) &:= \left(1 \vee 2eM^2 d\right)e^{2\kappa_1}, \\
        B = B(\kappa_1, \kappa_2, M^2 d) &:= \left\lfloor
        2B_0\exp\left(1 \vee \frac{2}{\kappa_2-1}\log\left(\frac{B_0}{\kappa_2-1} \vee e\right)\right)
        \right\rfloor.
        \label{definition:B}
    \end{align}
    Observe from Lemma~\ref{lemma:expansion} that
    \begin{align*}
        g &= \sum_{\mathbf k \in \mathbb N_0^d} \frac{\Delta_{\mathbf k}}{\sqrt{\mathbf k!}}h_{\mathbf k}, &
        \Delta_{\mathbf k} &= \int_{\mathbb R^d} \theta^{\mathbf k} d(\pi-\eta)(\theta).
    \end{align*}
    We decompose $g = q + r$, where
    \begin{align*}
        q &= \sum_{|\mathbf k| \leq n} \frac{\Delta_{\mathbf k}}{\sqrt{\mathbf k!}}h_{\mathbf k} \in \Pi_n^d, &
        r &= \sum_{|\mathbf k| > n} \frac{\Delta_{\mathbf k}}{\sqrt{\mathbf k!}}h_{\mathbf k}.
    \end{align*}
    From the compactness of the support, $|\Delta_{\mathbf k}| \leq 2(2M)^{|\mathbf k|}$.
    Thus, by the multinomial theorem and Stirling's formula,
    \begin{align}
        \sum_{|\mathbf k| = m} \frac{\Delta^2_{\mathbf k}}{\mathbf k!}
        &\leq \sum_{|\mathbf k| = m} \frac{4(4M^2)^m}{\mathbf k!}
        = \frac{4(4M^2d)^m}{m!}
        \leq \frac{4}{\sqrt{2\pi m}}\left(\frac{4eM^2d}{m}\right)^m.
        \label{proof:stirlingformula}
    \end{align}
    It follows from the definition \eqref{definition:n} that $n+1 \geq 2B_0e \geq 2\left(1 \vee 2eM^2 d\right)e^{1+2\kappa_1} \geq 16 \vee 8eM^2d$. Thus,
    \begin{align}
        \norm{r}_{L^2(\phi_d)}^2 = 
        \sum_{|\mathbf k| > n} \frac{\Delta^2_{\mathbf k}}{\mathbf k!}
        &\leq \sum_{m=n+1}^\infty \frac{4}{\sqrt{2\pi (n+1)}}\left(\frac{4eM^2d}{n+1}\right)^m
        \tag{by \eqref{proof:stirlingformula}}
        \\ &\leq \sum_{m=n+1}^\infty \frac{1}{2^{m-n-1}\sqrt{2\pi}}\left(\frac{4eM^2d}{n+1}\right)^{n+1}
        \tag{$\because n+1 \geq 16 \vee 8eM^2d$}
        \\ &\leq \left(\frac{4eM^2d}{n+1}\right)^{n+1}.
        \tag{$\because 2 \leq \sqrt{2\pi}$}
    \end{align}
    It follows from the definition \eqref{definition:B0} of $B_0$ that $4eM^2d \leq 2B_0e^{-2\kappa_1}$. Hence, by Lemma~\ref{lemma:lambert},
    \begin{align}
        \label{proof:tailbound}
        \norm{r}_{L^2(\phi_d)} &\leq \left(\frac{2B_0e^{-2\kappa_1}}{n+1}\right)^{(n+1)/2} \leq e^{-\kappa_1 n}t \leq \frac{1}{2}e^{-\kappa_1 n}\norm{g}_{L^2(\phi_d)}.
    \end{align}
    The last inequality follows from the H\"older's inequality $\norm{g}_{L^1(\phi_d)} \leq \norm{g}_{L^2(\phi_d)}$.
    We define $c_0 = c_0(\kappa_1, \kappa_2, M, d) := e^{-\kappa_1(A_1d \vee B)}$ and conclude that
    \begin{align}
        2t = \norm{g}_{L^1(\phi_d)} &\geq \norm{q}_{L^1(\phi_d)} - \norm{r}_{L^1(\phi_d)}
        \tag{$\because g = q + r$}
        \\ &\geq c_{n,d} \norm{q}_{L^2(\phi_d)} -\norm{r}_{L^2(\phi_d)}
        \tag{by \eqref{definition:chighdim}}
        \\ &\geq c_{n,d} \norm{g}_{L^2(\phi_d)} -2\norm{r}_{L^2(\phi_d)}
        \tag{$\because c_{n,d} \leq 1$}
        \\ &\geq 3e^{-\kappa_1 n} \norm{g}_{L^2(\phi_d)} - e^{-\kappa_1 n} \norm{g}_{L^2(\phi_d)}
        \tag{by \eqref{proof:nikolskii} and \eqref{proof:tailbound}}
        \\ &\geq 2\exp\left(-\kappa_1\left(
        A_1 d \vee B \vee \frac{2\kappa_2\log(1/t)}{\log\left(\log(1/t) \vee e\right)}
        \right)\right) \norm{g}_{L^2(\phi_d)}
        \nonumber
        \\ &\geq 2\left(c_0 \wedge t^{\alpha(t)}\right) \norm{g}_{L^2(\phi_d)},
        \nonumber
    \end{align}
    where
    \begin{align*}
        \alpha(t) &= \frac{2\kappa_1\kappa_2}{\log\left(\log(1/t) \vee e\right)}.
    \end{align*}
    Letting $C_0 := c_0^{-1}$ gives the desired result $\norm{g}_{L^2(\phi_d)} \leq \left(C_0 \vee t^{-\alpha(t)}\right)t$.
\end{proof}

\subsection{Dependency of the Constant}
\label{appendix:dependency}
In this section, we discuss how the constant $C_0$ in the main Theorems~\ref{theorem:uniformTV} and \ref{theorem:uniformnorm} depends on the radius $M$ and dimension $d$. In short, $\log(C_0)$ has a polynomial order in $M^2d$, and it is ``nearly'' linear in the regime where $\delta \to \infty$.

\begin{proposition}[Dependency of $C_0$ on $M$ and $d$]
    The constants $C_0 = C_0(\delta, M, d)$ in Theorems~\ref{theorem:uniformTV} and \ref{theorem:uniformnorm} coincide.
    Moreover, if we define $A_1 = A_1(\kappa_1)$ and $B = B(\kappa_1, \kappa_2, M^2d)$ as in \eqref{assumption:n} and \eqref{definition:B}, respectively, then we can specify the constant as
    \begin{align*}
        \log (C_0) &:= \inf_{2\kappa_1\kappa_2 = 2+\delta} \kappa_1(A_1d \vee B),
    \end{align*}
    where the infimum is taken with respect to $\kappa_1, \kappa_2 > 1$ such that $2\kappa_1\kappa_2 = 2+\delta$.
    \begin{proof}
        The definition \eqref{assumption:n} of $A_1 = A_1(\kappa_1)$ reflects the assumption of Corollary~\ref{corollary:onenormhighdim}, which is required to meet the condition of Propositions~\ref{proposition:restricted} and \ref{proposition:onenormhighdim} and to guarantee that $c_{n,d}$ defined in \eqref{definition:chighdim} is not less than $3e^{-\kappa_1 n}$,
        as demonstrated in the Corollary
        \ref{corollary:onenormhighdim}.
        On the other hand, the definitions \eqref{definition:B0} and \eqref{definition:B} of $B_0$ and $B$ reflect Lemma~\ref{lemma:lambert}, which is essential to control the tail norm $\norm{r}_{L^2(\phi_d)}$ of $g = \frac{f_\pi-f_\eta}{\phi_d}$. We give more detailed discussion below.
    \end{proof}
\end{proposition}

The first observation is that once $\kappa_1 > 1$ is fixed, $A_1$ is merely a universal constant. This shows that $\log(C_0)$ must depend on the dimension $d$ at least linearly. In contrast, the behavior of $B_0$ and $B$ described in \eqref{definition:B0} and \eqref{definition:B} is more intricate. It suffices to consider the regime where $2eM^2d > 1$ because if the radius $M$ of support is too small, we can simply embed the support into a larger cube. Therefore, once $\kappa_1$ is fixed, we have $B_0 \asymp M^2 d$. If in \eqref{definition:B} we are allowed to take $\kappa_2$ sufficiently large, then we would obtain $\log(C_0) \asymp B_0 \asymp M^2d$.
However, this cannot be achieved in the regime where $\delta > 0$ is fixed and $M^2d$ is large. In such a situation, we have the following
polynomial rate:
\begin{align*}
    \log(C_0) \asymp (M^2d)^{\frac{\kappa_2+1}{\kappa_2-1}}.
\end{align*}
If $\delta > 0$ is taken sufficiently large, the polynomial order in $M^2d$ may recover the limit $\frac{\kappa_2+1}{\kappa_2-1} \to 1$.

\subsection{Physical Interpretation: Quantum Harmonic Oscillator}
In this section, we provide physical interpretation of the restricted-range inequality, Proposition~\ref{proposition:restricted}.
A classical Hamiltonian of a particle in $\mathbb R^d$ is given by
\begin{align*}
    \mathcal H_{\mathrm{cl}} = \frac{1}{2}\norm{\xi}_2^2 + V(x),
\end{align*}
where $\xi$ and $x$ are the momentum and position of the particle, respectively. The classical harmonic oscillator is defined by the potential energy $V(x) := \frac{1}{2}\norm{x}_2^2$. The quantum-mechanical analog of the Hamiltonian is given by the following differential operator.
\begin{align*}
    \mathcal H = -\frac{\hbar^2}{2}\nabla^2 + V: \psi \mapsto -\frac{\hbar^2}{2}\left(\frac{\partial^2}{\partial x_1^2} + \cdots + \frac{\partial^2}{\partial x_d^2}\right)\psi + \frac{1}{2}\left(x_1^2+\cdots+x_d^2\right) \psi.
\end{align*}
Here $\psi: \mathbb R^d \to \mathbb R$ is a wave function and $\hbar > 0$ is a constant closely related to the Planck constant, while we assume natural (mathematical) length and energy scales.

\begin{proposition}[Isotropic quantum harmonic oscillator]
    \label{proposition:isotropic}
    For $\mathbf k \in \mathbb N_0^d$, define the Hermite function as
    \begin{align*}
        \psi_{\mathbf k}(x) := \left(\frac{2}{\hbar}\right)^{d/4}h_{\mathbf k}\left(\sqrt{\frac{2}{\hbar}}x\right)\phi_d^{1/2}\left(\sqrt{\frac{2}{\hbar}}x\right).
    \end{align*}
    Then,
    \begin{enumerate}
        \item $\mathcal H$ is a self-adjoint operator.
        \item (normalization) $\norm{\psi_{\mathbf k}}_{L^2(\mathbb R^d)} = 1$.
        \item (Schr\"odinger equation) $\mathcal H \psi_{\mathbf k} = E_{\mathbf k} \psi_{\mathbf k}$ where the eigenvalue is $E_{\mathbf k} = \frac{\hbar}{2}(2|\mathbf k|+d)$.
        \item $\{\psi_{\mathbf k}\}$ consists entirely of eigenfunctions of $\mathcal H$.
    \end{enumerate}
    Moreover, if we define the Mehler kernel $M(x, y; t) := \sum_{\mathbf k \in \mathbb N_0^d} e^{-t E_{\mathbf k}} \psi_{\mathbf k}(x)\psi_{\mathbf k}(y)$ for $t > 0$, then
    \begin{align*}
        M(x, y; t) = \left(2\pi\hbar\sinh(\hbar t)\right)^{-d/2}\exp\left(
        -\frac{\norm{x}_2^2+\norm{y}_2^2}{2\hbar \tanh(\hbar t)}
        +\frac{\langle x, y \rangle_{L^2(\mathbb R^d)}}{\hbar \sinh(\hbar t)}
        \right).
    \end{align*}
    \begin{proof}
        See Lemma~\ref{lemma:mehler}.
    \end{proof}
\end{proposition}
\begin{remark}
    The eigenvalue $E_{\mathbf k}$ is the energy level of the state $\mathbf k$.
    A complex-analytical analog of Mehler kernel is the Feynman propagator, where $t > 0$ represents inverse temperature.
\end{remark}

For the sake of the preceding proofs, we are only interested in the special case $\hbar = 2$, in which $\psi_{\mathbf k} = h_{\mathbf k}\phi_d^{1/2}$ and $E_{\mathbf k} = 2|\mathbf k| + d$.
Recall that Corollary~\ref{corollary:upperCD} describes upper bounds of the quantity $K_n(x, x)\phi_d(x)$ involving the diagonal entries of Christoffel-Darboux kernel \eqref{definition:chriskernel}. The quantity can be rewritten as
\begin{align}
    \label{eq:lowenergy}
    K_n(x, x)\phi_d(x) = \sum_{E_{\mathbf k} \leq E_{n,d}} \psi_{\mathbf k}^2(x), 
\end{align}
where $E_{n,d} = 2n+d$ as in \eqref{definition:energy}. Thus, \eqref{eq:lowenergy} represents
the diagonal entries of low-energy spectral projector kernel and explains the spatial density of states (DOS).
As such, local Weyl law states that, for every $x \in \mathbb R^d$, in the classical regime where $E_{n,d} \to \infty$, we have
\begin{align*}
    \sum_{E_{\mathbf k} \leq E_{n,d}} \psi_{\mathbf k}^2(x) \to (4\pi)^{-d} \int_{\mathcal H_{\mathrm{cl}} \leq E_{n,d}} d\xi
    = (4\pi)^{-d}\omega_d \left(2E_{n,d} - \norm{x}_2^2\right)^{d/2},
\end{align*}
where $\omega_d$ is the volume of the $d$-dimensional unit (Euclidean) ball.
Therefore, in the classically forbidden region where $\norm{x}_2 > \sqrt{2E_{n,d}}$, i.e., the potential energy exceeds the mechanical energy, we expect the quantity \eqref{eq:lowenergy}
to converge to zero as $n \to \infty$. The tail bound \eqref{eq:kerneltail} is the mathematically rigorous version of this intuition.
Refer to \citet{guillemin2013semi} for further details.

\section{Proof of the Sharpness}
\label{appendix:sharp}
This section completes the proof of our sharpness result by proving Lemma~\ref{lemma:construction} and Corollary~\ref{corollary:construction}.

\subsection{Preliminaries: Chebyshev Polynomials and Lemmas}
\begin{lemma}
    \label{lemma:poissontail}
    Suppose $|\Delta_k| \leq 2b^k$ holds for all $k \in \mathbb N$. Then, there exists a universal $N \in \mathbb N$ such that
    \begin{align*}
        n \geq N \vee (2.77)b^2 \Longrightarrow \sum_{k=n+1}^\infty \frac{\Delta_k^2}{k!} \leq \left(\frac{eb^2}{n+1}\right)^{n+1}.
    \end{align*}
    \begin{proof}
        According to the Stirling's formula, there exists $N \in \mathbb N$, not depending on $b$, such that, if $n \geq N$,
        \begin{align*}
            \frac{\Delta_{n+\ell}^2}{(n+\ell)!} \leq \frac{4b^{2(n+\ell)}}{(n+\ell)!} \leq \left(1-\frac{e}{2.77}\right)\left(\frac{eb^2}{n+\ell}\right)^{n+\ell}
        \end{align*}
        holds for $\ell \geq 1$.
        If we assume further that $n \geq (2.77)b^2$, then
        \begin{align*}
            \sum_{\ell=1}^\infty \left(1-\frac{e}{2.77}\right)\left(\frac{eb^2}{n+\ell}\right)^{n+\ell}
            \leq \sum_{\ell=1}^\infty \left(1-\frac{e}{2.77}\right)
            \left(\frac{e}{2.77}\right)^{\ell-1}\left(\frac{eb^2}{n+1}\right)^{n+1}
            =\left(\frac{eb^2}{n+1}\right)^{n+1}.
        \end{align*}
    \end{proof}
\end{lemma}

\begin{lemma}[Chebyshev polynomials of the first kind]
    \label{lemma:chebyshev}
    Let $n \geq 11$ and $\theta_j = \cos \left(\frac{2j+1}{2n+2}\pi\right), j = 0, \ldots, n$ be the zeros of Chebyshev polynomial of the first kind, $T_{n+1}(x)$, with degree $n+1$. Then,
    \begin{enumerate}
        \item $|T_{n+1}(t\sqrt{-1})| = \left\{(t+\sqrt{t^2+1})^{n+1}+(t-\sqrt{t^2+1})^{n+1}\right\}/2$ holds for $t > 0$.
        \item $z_n = \sqrt{-\frac{n}{2.77}} \in \mathbb C$ satisfies $\frac{1}{2^n|z_n|^{n+1}}\left|T_{n+1}\left(z_n\right)\right| < 2$.
        \item $\norm{V_{n+1}^{-1}}_\infty \leq \frac{(1+\sqrt{2})^{n+1}}{n+1}$, where
        \begin{align*}
            V_{n+1} = \begin{bmatrix}
            1 & \cdots & 1 \\
            \vdots & \ddots & \vdots \\
            \theta_0^n & \cdots & \theta_n^n
            \end{bmatrix}
        \end{align*}
        is the $(n+1) \times (n+1)$ Vandermonde matrix involving $\theta_0, \ldots, \theta_n$.
    \end{enumerate}
    \begin{proof}
        First, applying de Moivre's formula to the definition \eqref{definition:chebyshev} gives
        \begin{align*}
            T_{n+1}(x) &= \frac{1}{2}\left(\zeta^{n+1}+\zeta^{-(n+1)}\right),
        \end{align*}
        where $x \in \mathbb C$ and $\zeta = x \pm \sqrt{x^2-1}$.
        (No matter which branch is chosen for the square root, the two summands are reciprocal to each other.)
        Second,
        if $z_n = \sqrt{-\frac{n}{2.77}}$, then
        \begin{align*}
            \frac{1}{2^n|z_n|^{n+1}}|T_{n+1}(z_n)| &=
            \left(\frac{1+\sqrt{1+\frac{2.77}{n}}}{2}\right)^{n+1}+
            \left(\frac{1-\sqrt{1+\frac{2.77}{n}}}{2}\right)^{n+1}
            \\ &\to \exp\left(\frac{2.77}{4}\right) < 2,
        \end{align*}
        as $n \to \infty$.
        (A more careful computation shows $n \geq 11$ is sufficient.)
        Finally, according to Example 6.2 of \citet{gautschi1974norm}, we have
        \begin{align*}
            \norm{V_{n+1}^{-1}}_\infty \leq \frac{3^{3/4}}{2(n+1)}|T_{n+1}(\sqrt{-1})|
            \leq \frac{(1+\sqrt{2})^{n+1}}{n+1}.
        \end{align*}
    \end{proof}
\end{lemma}

\begin{lemma}
    \label{lemma:technical}
    Let $n$ be a positive odd integer. Then,
    \begin{align*}
        \max\left\{\frac{(n/2.77)^\ell}{(2\ell)!!}: \ell = 0, \ldots, \frac{n-1}{2}\right\}
        \leq \exp\left(\frac{n}{5.54}\right),
    \end{align*}
    where $(2\ell)!!$ denotes a double factorial.
    \begin{proof}
        For $\ell \geq 1$, we have $(2\ell)!! = 2^\ell \ell!$ and
        \begin{align*}
            \frac{(n/2.77)^\ell}{2^\ell\ell!} \leq \left(\frac{en}{5.54\ell}\right)^\ell
            \leq \exp\left(\frac{n}{5.54}\right).
        \end{align*}
        The first inequality holds from the Stirling's formula and
        the second one is given by optimizing with respect to $\ell$ over positive reals.
        The optimal value is attained at $\ell = n/(5.54)$.
    \end{proof}
\end{lemma}

\subsection{Proofs}
\label{appendix:sharpproof}

We now proceed to prove Lemma~\ref{lemma:construction} and Corollary~\ref{corollary:construction}.

\begin{proof}[Proof of Lemma~\ref{lemma:construction}]
    We solve the following linear system:
    \begin{align*}
        \begin{bmatrix}
            1 & & 0 \\ & \ddots & \\ 0 & & a^n
        \end{bmatrix}
        \begin{bmatrix}
            1 & \cdots & 1 \\
            \vdots & \ddots & \vdots \\
            \theta_0^n & \cdots & \theta_n^n
        \end{bmatrix} \begin{bmatrix}
            w_0 \\ \vdots \\ w_n
        \end{bmatrix} = \begin{bmatrix}
            \Delta_0 \\ \vdots \\ \Delta_n
        \end{bmatrix}.
    \end{align*}
    By the third statement of Lemma~\ref{lemma:chebyshev}, we have $|w_j| \leq \norm{V_{n+1}^{-1}}_\infty a^{-n}\Delta_n \leq \frac{1}{n+1}$ for all $j$.
    Indeed, $\pi_n^{(0)}$ and $\eta_n^{(0)}$ are valid probability measures supported on $[-M, M]$ since $\sum_{j=0}^n w_j = \Delta_0 = 0$. We also have
    \begin{align*}
        \Delta_k = \sum_{j=0}^n w_j (a\theta_j)^k = \int \theta^k d(\pi_n^{(0)}-\eta_n^{(0)})(\theta),
    \end{align*}
    for $k = 0, 1, \ldots, n$.
    Lemma~\ref{lemma:technical} verifies that
    \eqref{proof:induction}
    holds for all $0 \leq k \leq n$.
    We will now use mathematical induction to show that, in fact, \eqref{proof:induction} holds for all $k \geq 0$.
    Let $K \geq n$ and assume the induction hypothesis \eqref{proof:induction} to be true for all $k \leq K$.
    Recall that
    \begin{align*}
        T_{n+1}(x) = 2^n(x^{n+1} - \sigma_2x^{n-1} + \sigma_4x^{n-3} - \cdots + (-1)^{(n+1)/2}\sigma_{n+1}),
    \end{align*}
    where $\sigma_m$ denotes the $m$-th elementary symmetric function of the zeros $\theta_0, \ldots, \theta_n$. Since $T_{n+1}(\theta_j) = 0$,
    \begin{align*}
        (a\theta_j)^{K+1} &= \sigma_2 a^2 (a\theta_j)^{K-1} - \sigma_4 a^4 (a\theta_j)^{K-3} + \cdots + (-1)^{(n-1)/2}\sigma_{n+1}a^{n+1}(a\theta_j)^{K-n}, \\
        |\Delta_{K+1}| &= \left|\sum_{j=0}^n w_j (a\theta_j)^{K+1}\right|
        \\ &\leq
        \sigma_2 a^2|\Delta_{K-1}| +
        \sigma_4 a^4|\Delta_{K-3}| + \cdots + \sigma_{n+1}a^{n+1}|\Delta_{K-n}|
        \\ &\leq \left\{a(\sqrt{2}-1)\right\}^{n+1}\exp\left(\frac{n}{5.54}\right)b^{K+1-n}\left(
        \sigma_2 (a/b)^2 + \sigma_4 (a/b)^4 + \cdots
        \right)
        \\ &= \left\{a(\sqrt{2}-1)\right\}^{n+1}\exp\left(\frac{n}{5.54}\right)b^{K+1-n}
        \left(\frac{a^{n+1}}{2^n b^{n+1}}
        \left|T_{n+1}\left(\frac{b}{a}\sqrt{-1}\right)\right| - 1
        \right)
        \\ &\leq \left\{a(\sqrt{2}-1)\right\}^{n+1}\exp\left(\frac{n}{5.54}\right)b^{K+1-n}.
    \end{align*}
    The last inequality follows from the second statement of Lemma~\ref{lemma:chebyshev}.
    We have shown that the induction hypothesis \eqref{proof:induction} is also true for $k = K+1$. Thus, \eqref{proof:induction} is true for all $k \geq 0$.
    Now, we proceed to prove the very last statement. In view of Lemma~\ref{lemma:poissontail}, there exists $N \in \mathbb N$, not depending on $a$ or $b$, such that if $n \geq N$, then
    \begin{align*}
        \norm{r_n}_{L^2(\phi)} &\leq
        \left\{a(\sqrt{2}-1)\right\}^{n+1}\exp\left(\frac{n}{5.54}\right)b^{-n}\left(\frac{eb^2}{n+1}\right)^{(n+1)/2}
        \\ &\leq
        \left\{a(\sqrt{2}-1)\right\}^{n+1}\exp\left(\frac{n}{5.54}\right)\sqrt{\frac{n}{2.77}}\left(\frac{e}{n+1}\right)^{(n+1)/2}
        \\ &\leq \left\{a(\sqrt{2}-1)\right\}^{n+1}\exp\left(\frac{n}{5.54}\right)\left(\frac{e}{n}\right)^{n/2}.
    \end{align*}
    Lastly, observing that
    \begin{align*}
        q_n(x) &= \sum_{\ell=0}^{(n-1)/2} \left\{a(\sqrt{2}-1)\right\}^{n+1}\frac{h_{n-2\ell}(x)}{
        (2\ell)!!\sqrt{(n-2\ell)!}
        } \\ &= \left\{a(\sqrt{2}-1)\right\}^{n+1}\frac{x^n}{n!}
    \end{align*}
    gives the following explicit formulas for $L^1(\phi)$ and $L^2(\phi)$ norms of $q_n$.
    \begin{align*}
        \norm{q_n}_{L^1(\phi)} &= \left\{a(\sqrt{2}-1)\right\}^{n+1}\frac{2^{n/2}\pi^{-1/2}\Gamma\left(\frac{n+1}{2}\right)}{n!}
        \\ &= \left\{a(\sqrt{2}-1)\right\}^{n+1}(\pi n)^{-1/2}
        \left(\frac{e}{n}\right)^{n/2}\left(1+O\left(\frac{1}{n}\right)\right), \\
        \norm{q_n}_{L^2(\phi)} &= \left\{a(\sqrt{2}-1)\right\}^{n+1}\frac{2^{n/2}\pi^{-1/4}\Gamma^{1/2}\left(n+\frac{1}{2}\right)}{n!}
        \\ &= \left\{a(\sqrt{2}-1)\right\}^{n+1}(\pi n)^{-1/2}
        \left(\frac{e}{n}\right)^{n/2}2^{\frac{n}{2}-\frac{1}{4}}\left(1+O\left(\frac{1}{n}\right)\right).
    \end{align*}
    Comparing these asymptotics shows \eqref{proof:consequence} and \eqref{proof:consequence2}.
    In particular, both $\norm{q_n}_{L^1(\phi)}$ and $\norm{q_n}_{L^2(\phi)}$ decay in a hyper-exponential rate of $\exp(-n\log n/2)$, and the tail norm $\norm{r_n}_{L^2(\phi)}$ cannot deviate from $\norm{q_n}_{L^1(\phi)}$ or $\norm{q_n}_{L^2(\phi)}$ faster than an exponential rate in $n$. We have \eqref{proof:conclusion} in conclusion.    
\end{proof}

\begin{proof}[Proof of Corollary~\ref{corollary:construction}]
    The equality for the total variation distance is straightforward.
    In view of Lemma~\ref{lemma:construction}, let $n$ be a large enough odd integer.
    By construction, we have
    \begin{align*}
        f_{\pi_n^{(1)}}(x) &= (1-\lambda_n)\phi(x) + \sum_{j=0}^n \left(\frac{\lambda_n}{n+1}+\lambda_n w_j\right)\phi(x-a\theta_j), \\
        f_{\eta_n^{(1)}}(x) &= (1-\lambda_n)\phi(x) + \sum_{j=0}^n \frac{\lambda_n}{n+1}\phi(x-a\theta_j).
    \end{align*}
    Recall from the lemma that $|\theta_j| \leq 1$ for all $j$ and that $0 < a \leq 1$.
    Also, recall the definition \eqref{definition:Rn} of $R_n$ and $\lambda_n$.
    Observe for all $x \in [-R_n, R_n]$ and $j$ that
    \begin{align*}
        \frac{\phi(x-a\theta_j)}{\phi(x)} = \exp\left(a\theta_jx - \frac{1}{2} a^2\theta_j^2\right)
        \leq \exp(|a\theta_j|R_n) \leq \exp(R_n)
    \end{align*}
    and that
    \begin{align}
    \label{proof:twice}
        f_{\eta_n^{(1)}}(x)
        \leq \left(1-\lambda_n + \lambda_n \exp(R_n)\right) \phi(x)
        \leq 2\phi(x).
    \end{align}
    Lastly, recall the definition \eqref{definition:energy} of $E_{n,d}$.
    Note that $E_{n,1} = 2n+1$ and that $R_n = \sqrt{8n+4} = \sqrt{2\kappa E_{n,1}}$ holds for $\kappa = 2$. Therefore, we have
    \begin{align*}
        \frac{2}{\lambda_n^2}\chi^2\left(f_{\pi_n^{(1)}} \| f_{\eta_n^{(1)}}\right)
        &\geq \frac{2}{\lambda_n^2}\int_{-R_n}^{R_n}\frac{\left(f_{\pi_n^{(1)}} - f_{\eta_n^{(1)}}\right)^2}{f_{\eta_n^{(1)}}}
        \\ &\geq \int_{-R_n}^{R_n} \frac{\left(f_{\pi_n^{(0)}}-f_{\eta_n^{(0)}}\right)^2}{\phi}
        \tag{by \eqref{proof:twice}}
        \\ &= \norm{q_n+r_n}^2_{L^2([-R_n, R_n], \phi)}
        \\ &\geq \frac{1}{2}\norm{q_n}^2_{L^2([-R_n, R_n], \phi)}
        -\norm{r_n}_{L^2(\phi)}^2
        \tag{$\because 2(a^2+b^2) \geq (a-b)^2$}
        \\ &\geq \frac{1}{4}\norm{q_n}^2_{L^2(\phi)}-\norm{r_n}_{L^2(\phi)}^2
        \tag{by Proposition~\ref{proposition:restricted} with $\kappa = 2$}
        \\ &\geq \frac{1}{8}\norm{q_n}_{L^2(\phi)}^2,
        \tag{by inequality \eqref{proof:consequence2}}
    \end{align*}
    provided that $n$ is large enough.
\end{proof}

\section{Proof of the Applications}
\label{appendix:proofapp}
In this section, we
prove Theorem~\ref{theorem:learninginTV}, Proposition~\ref{proposition:yatracos}, Theorems~\ref{theorem:robust}, \ref{theorem:robustlower}, and \ref{theorem:regret}.

\subsection{Preliminaries: Yatracos' Construction and Lemmas}
\label{appendix:yatracos}
We first recall the application of Yatracos' scheme idea \citep{yatracos1985rates} for robust density estimation in total variation.

Consider an $\eta$-covering $\{Q_1, \ldots, Q_N\}$ of $\mathcal P_{M,d}$ in total variation.
Then, we define the Yatracos' class $\mathcal A$ by
\begin{align*}
    \mathcal A &:= \left\{A_{ij}: i \neq j \in [N]\right\}, \\
    A_{ij} &:= \left\{x: \frac{dQ_i}{d(Q_i+Q_j)}(x) \geq
    \frac{dQ_j}{d(Q_i+Q_j)}(x)\right\},
\end{align*}
so that $|\mathcal A| \leq N^2$.
Given the class $\mathcal A$, we define a pseudo-distance $\mathrm{dist}$ as follows.
\begin{align*}
    \mathrm{dist}(P_1, P_2) := \sup_{A \in \mathcal A} |P_1(A)-P_2(A)|.
\end{align*}
Then, $\mathrm{dist}$ satisfies triangular inequality. Moreover, it approximates the total variation on $\mathcal P_{M, d}$, in the sense that
\begin{align*}
    \mathrm{dist}(Q_i, Q_j) &= \mathrm{TV}(Q_i, Q_j), \\
    \mathrm{dist}(P_1, P_2) &\leq \mathrm{TV}(P_1, P_2) \leq \mathrm{dist}(P_1, P_2) + 4\eta, &
    \forall P_1, P_2 &\in \mathcal P_{M,d}.
\end{align*}
Given i.i.d. observations $X_1, \ldots, X_n$ as in \eqref{eq:huber-density}, we define the Yatracos' estimator $\widehat P$ by
\begin{align}
    \label{definition:yatracos}
    \widehat P := \argmin_{P' \in \mathcal P_{M,d}} \mathrm{dist}\left(P', \widehat P_n \right),
\end{align}
where $\widehat P_n := \frac{1}{n}\sum_{i=1}^n \delta_{X_i}$ is the empirical distribution.
Note that the Yatracos' scheme works even if $P = (1-\epsilon)P_{f_\pi}+\epsilon Q$ is outside $\mathcal P_{M,d}$.
In particular, we have
\begin{align}
    \mathrm{TV}\left(P, \widehat P\right) \leq 3\inf_{P' \in \mathcal P_{M,d}} \mathrm{TV}\left(P, P'\right) + 3\eta + 2 \, \mathrm{dist}\left(P, \widehat P_n\right).
    \label{eq:yatracos}
\end{align}
See Section 32.3 of \citet{polyanskiy2025information} for recent review on the Yatracos' estimator.
As a consequence, we can derive the minimax upper bound in Proposition~\ref{proposition:yatracos}, noting that $\log N \lesssim \log^{d+1}(1/\eta)$ holds from Lemma~\ref{lemma:entropy}. It only remains to choose appropriate $\eta$ for \eqref{eq:yatracos}. See Appendix~\ref{appendix:regret} for the details.

\begin{lemma}[TV entropy bound in $d$ dimension]
    \label{lemma:entropy}
    Recall the definition of covering number from Definition~\ref{definition:localmetric}. We have
    \begin{align*}
        \log N_{\mathrm{TV}}(\mathcal P_{M,d}, \eta) \lesssim
        \log^{d+1}\left(\frac{1}{\eta}\right).
    \end{align*}
    \begin{proof}
        For the one-dimensional case ($d=1$), the entropy bound is due to \citet{ghosal2001entropies}.
        Recent works extended this result to arbitrary dimensions
        \citep{saha2020nonparametric, ma2025best}.
        Let $\mathcal P_m$ be the collection of $m$-atomic Gaussian mixtures in $\mathcal P_{M,d}$ and define
        \begin{align*}
            m^\star := \inf\left\{m \in \mathbb N: \sup_{P \in \mathcal P_{M,d}}\inf_{P_m \in \mathcal P_m} \mathrm{TV}(P, P_m) \leq \frac{\eta}{2} \right\}.
        \end{align*}
        Then, Proposition 5 of \citet{ma2025best} shows $m^\star \lesssim \log^d\left(1/\eta\right)$. On the other hand, parametric entropy bound on finite mixtures shows
        \begin{align*}
            \log N_{\mathrm{TV}}\left(\mathcal P_{m^\star}, \frac{\eta}{2}\right) \lesssim m^\star d \log\left(\frac{1}{\eta}\right).
        \end{align*}
        Combining these results with triangular inequality concludes the proof.
    \end{proof}
\end{lemma}

\begin{lemma}[\citet{chen2018robust}]
    \label{lemma:density}
    Suppose $P_1$ and $P_2$ are probability measures such that
    $\mathrm{TV}(P_1, P_2) \leq \frac{\epsilon}{1-\epsilon}$. Then, there exist two probability measures $Q_1$ and $Q_2$ such that $(1-\epsilon)P_1 + \epsilon Q_1 = (1-\epsilon)P_2 + \epsilon Q_2$.
\end{lemma}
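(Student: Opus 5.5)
The plan is to build $Q_1$ and $Q_2$ explicitly from the Jordan decomposition of the signed measure $P_1 - P_2$. The target identity $(1-\epsilon)P_1 + \epsilon Q_1 = (1-\epsilon)P_2 + \epsilon Q_2$ is equivalent to
\begin{align*}
    \epsilon(Q_1 - Q_2) = (1-\epsilon)(P_2 - P_1).
\end{align*}
So it suffices to produce two probability measures whose difference is the prescribed signed measure $\frac{1-\epsilon}{\epsilon}(P_2 - P_1)$.

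Let $\tau := \mathrm{TV}(P_1, P_2)$ and write $P_1 - P_2 = \mu_+ - \mu_-$, where $\mu_\pm := (P_1 - P_2)_\pm$ are the positive and negative parts. Since $P_1$ and $P_2$ have equal total mass, both $\mu_+$ and $\mu_-$ are nonnegative measures of total mass exactly $\tau$. Fix an arbitrary probability measure $R$, for instance $R = P_1$ or a point mass, and define
\begin{align*}
    Q_1 &:= \frac{1-\epsilon}{\epsilon}\mu_- + \left(1 - \frac{(1-\epsilon)\tau}{\epsilon}\right)R, &
    Q_2 &:= \frac{1-\epsilon}{\epsilon}\mu_+ + \left(1 - \frac{(1-\epsilon)\tau}{\epsilon}\right)R.
\end{align*}

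It remains to verify three things. First, the hypothesis $\tau \leq \frac{\epsilon}{1-\epsilon}$ gives $1 - \frac{(1-\epsilon)\tau}{\epsilon} \geq 0$, so both $Q_1$ and $Q_2$ are nonnegative measures. Second, each has total mass $\frac{1-\epsilon}{\epsilon}\tau + 1 - \frac{(1-\epsilon)\tau}{\epsilon} = 1$, so both are probability measures. Third, the $R$-terms cancel in the difference, giving
\begin{align*}
    \epsilon(Q_1 - Q_2) = (1-\epsilon)(\mu_- - \mu_+) = (1-\epsilon)(P_2 - P_1),
\end{align*}
which is the required identity. The degenerate case $\tau = 0$ is covered automatically, with $Q_1 = Q_2 = R$.

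There is no real obstacle here. The only point needing care is the mass budget: the hypothesis is exactly the condition that the padding coefficient in front of $R$ is nonnegative. Everything else is bookkeeping with the Jordan decomposition.
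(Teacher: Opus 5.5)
Your construction is correct: with $\mu_\pm$ the positive and negative parts of $P_1-P_2$ (each of mass $\tau$), your $Q_1,Q_2$ satisfy $\epsilon(Q_1-Q_2)=(1-\epsilon)(P_2-P_1)$. The hypothesis $\tau\le\epsilon/(1-\epsilon)$ is exactly the nonnegativity of the padding weight on $R$ (this needs $\epsilon\in(0,1)$; the case $\epsilon=0$ forces $P_1=P_2$ and is trivial). The paper states this lemma without proof and defers to \citet{chen2018robust}, and your argument is essentially the standard one behind that result, which builds the contaminations from the positive and negative parts of $P_1-P_2$.
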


\subsection{Proofs}
\label{appendix:regret}
We proceed to prove
Theorem~\ref{theorem:learninginTV}, Proposition~\ref{proposition:yatracos}, Theorems~\ref{theorem:robust}, \ref{theorem:robustlower}, and \ref{theorem:regret} in this section.

\begin{proof}[Proof of Theorem~\ref{theorem:learninginTV}]
First, for one estimator $\widehat P$, suppose $\widetilde P$ is the projection of $\widehat P$ onto $\mathcal P$ under TV distance. Then, for every $P \in \mathcal P$, we have
    \begin{align*}
        \mathrm{TV}\left(P, \widetilde P\right) \leq \mathrm{TV}\left(P, \widehat P\right) + \mathrm{TV}\left(\widehat P, \widetilde P\right)
        \leq 2 \mathrm{TV}\left(P, \widehat P\right).
    \end{align*}
    This allows $\widehat P$ to be restricted to $\mathcal P$ up to universal constants.
        
    Second, the upper bound follows immediately from the inequality \eqref{eq:well-known} and Proposition~\ref{proposition:jia11}.
        
    Third, applying Corollary~\ref{corollary:uniformTV} gives
    \begin{align}
        \mathbb P\left[\mathrm{TV}\left(P, \widehat P\right)
        \geq \mathcal J^{-1}\left(\frac{\epsilon_n}{4}\right)\right]
        &\geq \mathbb P\left[H\left(P, \widehat P\right)
        \geq \frac{\epsilon_n}{4}\right]
        \geq \frac{1}{2}, \label{proof:jia11}
    \end{align}
    where we define $\alpha(t)$ as in \eqref{definition:alpha} and $\mathcal J(t)$ as
    \begin{align}
        \mathcal J(t) &:= C_0t \vee t^{1-\alpha(t)},    
        \label{definition:J}
    \end{align}
    for $t > 0$. Note that the inverse $\mathcal J^{-1}$ is well-defined in the regime where $n \to \infty$ as $\mathcal J$ is strictly increasing in $(0, t_0)$ for some $t_0 > 0$.
    The last inequality in \eqref{proof:jia11} is due to Fano's inequality used in the proof of Corollary 11 of \citet{jia2023entropic}. We conclude that
    \begin{align*}
        \inf_{\widehat P \in \mathcal P}\sup_{P \in \mathcal P} \mathbb E_P \left[\mathrm{TV}^2\left(P, \widehat P\right)\right]
        &\gtrsim \left(\mathcal J^{-1}\left(\frac{\epsilon_n}{4}\right)\right)^2
        \\ &\gtrsim \epsilon_n^{2\left(1+\frac{2+\delta}{\log(\log(1/\epsilon_n) \vee e)}\right)}.
    \end{align*}
\end{proof}

\begin{proof}[Proof of Proposition~\ref{proposition:yatracos}]
    The standard Yatracos' construction \eqref{definition:yatracos} leads to a proper estimator $\widehat P \in \mathcal P_{M,d}$.
    We denote by $\widehat f$ the density of $\widehat P$.
    Observe that
    \begin{align*}
        \inf_{P' \in \mathcal P_{M,d}} \mathrm{TV}\left(P, P'\right) \leq \mathrm{TV}\left(P, P_{f_\pi}\right) \leq \epsilon.
    \end{align*}
    Hence, by \eqref{eq:yatracos}, $\widehat f$ satisfies
    \begin{align}
        \label{proof:yatracos}
        \mathrm{TV}\left(f_{\pi}, \widehat f\right) \leq \epsilon + \mathrm{TV}\left(P, \widehat P\right) &\leq 4\epsilon + 3\eta + 2 \, \mathrm{dist}\left(P, \widehat P_n\right).
    \end{align}
    Applying the Hoeffding bound and union bound, we have
    \begin{align*}
        \mathbb P\left(\mathrm{dist}\left(P, \widehat P_n\right) \geq s\right)
        &\leq 1 \wedge 2|\mathcal A|\exp\left(-\frac{ns^2}{2}\right), \\
        \mathbb E_P\left(\mathrm{dist}^2\left(P, \widehat P_n\right)\right)
        &\leq \frac{2\left(1 + \log \left(2|\mathcal A|\right)\right)}{n}.
    \end{align*}
    Lemma~\ref{lemma:entropy} implies
    \begin{align*}
        \log |\mathcal A| &\leq 2 \log N_{\mathrm{TV}}(\mathcal P_{M,d}, \eta)
        \lesssim \log^{d+1}\left(1/\eta\right).
    \end{align*}
    Accordingly, we choose optimal
    $\eta \asymp \log^{d/2}(n)/\sqrt{n}$ to conclude the proof.
\end{proof}

\begin{proof}[Proof of Theorem~\ref{theorem:robust}]
    Let $\widehat f$ be the proper estimator from the proof of Proposition~\ref{proposition:yatracos}.
    We first show that the function $G(t) := t^{1-\alpha(t)}$ is strictly increasing and concave on an open interval $t < t_0$, where $\alpha(t) = \frac{c}{\log\log(1/t)}$, $c = 2+\delta$, and $t_0 < e^{-e}$ is a constant depending only on $\delta$. To this end, we take derivatives:
    \begin{align*}
        G'(t) &= \frac{G(t)}{t}\left(1-\frac{c}{\log\log(1/t)}+\frac{c}{\log^2\log(1/t)}\right), &
        G''(t) &= -\frac{G(t)}{t^2}\left(\frac{c}{\log\log(1/t)}-\frac{c^2+c+o(1)}{\log^2\log(1/t)}\right),
    \end{align*}
    verifying that $G'(t) > 0$ and $G''(t) < 0$ hold for all $t < t_0$.
    We also note that $\lim_{t \searrow 0} G(t) = 0$. Therefore, given that $C_0$ is not less than $t_0^{-\alpha(t_0)}$, a constant depending only on $\delta$, there exists $0 < t_1 \leq t_0$ such that
    \begin{align*}
        \mathcal J(t) = C_0 t \vee G(t) = \begin{cases}
            C_0 t, & t \geq t_1, \\
            G(t), & t < t_1,
        \end{cases}
    \end{align*}
    where we define $\mathcal J(\cdot)$ as in \eqref{definition:J}.
    Using the concavity of $G$, we obtain for all $s,t>0$ that
    \begin{align*}
        \mathcal J(s+t) &= C_0(s+t) \vee G(s+t)
        \\ &\leq C_0(s+t) \vee (G(s)+G(t))
        \\ &\leq (C_0s \vee G(s)) + (C_0t \vee G(t))
        \\ &= \mathcal J(s) + \mathcal J(t).
    \end{align*}
    The second line is due to the fact that $C_0(s+t) < G(s+t)$ implies $G(s+t) \leq G(s)+G(t)$, and the third line is due to the general inequality: $(a+c)\vee(b+d)\leq(a\vee b)+(c\vee d)$.
    Thus, applying Corollary~\ref{corollary:uniformTV} to \eqref{proof:yatracos} gives
    \begin{align*}
        H\left(f_{\pi}, \widehat f\right) &\leq 4\mathcal J(\epsilon) + 3\mathcal J(\eta) + 2 \mathcal J\left(\mathrm{dist}\left(P, \widehat P_n\right)\right).
    \end{align*}
    Hence, by the Cauchy-Schwarz inequality,
    \begin{align*}
        H^2\left(f_\pi, \widehat f\right) \leq (4^2+3^2+2^2)\left[\mathcal J^2(\epsilon) + \mathcal J^2(\eta) + \mathcal J^2\left(\mathrm{dist}\left(P, \widehat P_n\right)\right)\right].
    \end{align*}
    Taking expectations on both sides yields the desired bound \eqref{proof:expectationbound} by choosing the same $\eta$ as in the proof of Proposition~\ref{proposition:yatracos}.
\end{proof}

\begin{proof}[Proof of Theorem~\ref{theorem:robustlower}]
    The minimax lower bound in $\epsilon$ can be obtained from standard two-point method.
    Our sharpness result, Theorem~\ref{theorem:sharp}, shows that there exist two ``one-dimensional'' probability measures $\pi^\star$ and $\eta^\star$, supported on the bounded interval $[-M, M]$, such that $\mathrm{TV}(f_{\pi^\star}, f_{\eta^\star}) \leq \epsilon \leq \frac{\epsilon}{1-\epsilon}$ and that
    \begin{align*}
        H(f_{\pi^\star}, f_{\eta^\star}) \gtrsim
        \epsilon^{\left(1-\frac{0.33}{\log(\log(1/\epsilon) \vee e)}\right)}.
    \end{align*}
    Note that we can also construct $d$-dimensional probability measures $\pi$ and $\eta$ with the same property because $\mathrm{TV}(f_\pi, f_\eta) = \mathrm{TV}(f_{\pi^\star}, f_{\eta^\star})$ and $H(f_\pi, f_\eta) = H(f_{\pi^\star}, f_{\eta^\star})$ for
    \begin{align*}
        \pi &= \pi^\star \otimes \delta_0^{\otimes(d-1)}
        = \pi^\star \otimes \delta_0 \otimes \cdots \otimes \delta_0, \\
        \eta &= \eta^\star \otimes \delta_0^{\otimes(d-1)}
        = \eta^\star \otimes \delta_0 \otimes \cdots \otimes \delta_0,
    \end{align*}
    where $\delta_0$ denotes the point mass at zero and $\otimes$ the product measure.
    Thus, it follows from Lemma~\ref{lemma:density} and the same two-point argument in \citet{chen2018robust} that
    \begin{align*}
        \inf_{\widehat f} \sup_{\pi, Q} \mathbb E
        \left[H^2\left(f_\pi, \widehat f\right)\right] &\gtrsim
        \epsilon^{2\left(1-\frac{0.33}{\log(\log(1/\epsilon) \vee e)}\right)}.
    \end{align*}
\end{proof}

\begin{proof}[Proof of Theorem~\ref{theorem:regret}]
    This proof crucially relies on the proof of Theorem 3.5 of \citet{saha2020nonparametric}.
    Our proof, however, differs from theirs in the choice of $\rho$: they take $\rho = (2\pi)^{-d/2}n^{-1}$, whereas we use
    \begin{align*}
        \rho = (2\pi)^{-d/2}\left(\mathcal E^2(\epsilon, n) \wedge e^{-2}\right),
    \end{align*}
    where we define $\mathcal E^2(\epsilon, n)$ as in \eqref{definition:errorfunction}.
    
    Recall that the oracle Bayes estimator $\widehat \theta^\star(\cdot)$ is given by \eqref{eq:tweedie}, and consider the following decomposition:
    \begin{align}
    \label{proof:regretdecom}
        \mathbb E_{X \sim f_\pi} \norm{\widehat \theta_\rho(X) - \widehat \theta^\star(X)}^2 &\leq
        2\mathbb E_{X \sim f_\pi} \norm{\widehat \theta_\rho(X) - \widehat \theta^\star_\rho(X)}^2 +
        2\mathbb E_{X \sim f_\pi} \norm{\widehat \theta^\star_\rho(X) - \widehat \theta^\star(X)}^2,
    \end{align}
    where we define
    \begin{align*}
        \widehat{\theta}^\star_\rho(X) := X + \frac{\nabla f_\pi(X)}{f_\pi(X) \vee \rho}.
    \end{align*}
    The first term of \eqref{proof:regretdecom} is bounded from above as follows using Theorem E.1 of \citet{saha2020nonparametric} together with the discretization argument from the proof of Theorem 3.5 therein.
    \begin{align*}
        \mathbb E_{X \sim f_\pi} \norm{\widehat \theta_\rho(X) - \widehat \theta^\star_\rho(X)}^2 &= \int \norm{\frac{\nabla \widehat f(x)}{\widehat f(x) \vee \rho}-\frac{\nabla f_\pi(x)}{f_\pi(x) \vee \rho}}^2 f_\pi(x) \, dx
        \\ &\lesssim H^2\left(f_\pi, \widehat f\right)
        \left(\log\frac{1}{H\left(f_\pi, \widehat f\right)}\vee \log^3\left(\frac{1}{\mathcal E(\epsilon, n)} \vee e\right)\right).
    \end{align*}
    For the second term of \eqref{proof:regretdecom}, we have
    \begin{align*}
        \mathbb E_{X \sim f_\pi} \norm{\widehat \theta^\star_\rho(X) - \widehat \theta^\star(X)}^2 &= \int \norm{\frac{\nabla f_\pi(x)}{f_\pi(x) \vee \rho} - \frac{\nabla f_\pi(x)}{f_\pi(x)}}^2 f_\pi(x) \, dx
        \\ &= \int \left(1-\frac{f_\pi(x)}{f_\pi(x) \vee \rho}\right)^2 \frac{\norm{\nabla f_\pi(x)}^2}{f_\pi(x)} \, dx
        \\ &\lesssim \mathcal E^2(\epsilon, n)\log^d\left(\frac{1}{\mathcal E(\epsilon, n)} \vee e\right).
    \end{align*}
    The last inequality follows from Lemma 4.3 of \citet{saha2020nonparametric}.

    Recall from our Theorem~\ref{theorem:robust} that
    \begin{align*}
        \mathbb E\left[H^2\left(f_\pi, \widehat f\right)\right] \lesssim \mathcal E^2(\epsilon, n).
    \end{align*}
    For brevity, write $H: = H\left(f_\pi, \widehat f\right)$ and $\mathcal E := \mathcal E(\epsilon, n)$ for the remainder of the proof. Observe that the function $H \mapsto H^2\log\frac{1}{H}$ is bounded from above, and it is strictly increasing for $H \leq e^{-1}$. Thus,
    \begin{align*}
        &\mathbb E\left[H^2\log\frac{1}{H}\right]
        \\ &= \mathbb E\left[H^2\log\frac{1}{H}\mathbf 1\{H \leq \mathcal E \leq e^{-1}\}\right]
        + \mathbb E\left[H^2\log\frac{1}{H}\mathbf 1\{H \leq \mathcal E\}\mathbf 1\{\mathcal E > e^{-1}\}\right]
        + \mathbb E\left[H^2\log\frac{1}{H}\mathbf 1\{H > \mathcal E\}\right]
        \\ &\leq \mathcal E^2\log\left(\frac{1}{\mathcal E}\vee e\right)
        + \mathbb E\left[H^2\log\frac{1}{H}\mathbf 1\{\mathcal E > e^{-1}\}\right]
        + \mathbb E\left[H^2\right]\mathbb P\left[H^2 > \mathcal E^2\right]
        \log\left(\frac{1}{\mathcal E} \vee e\right)
        \\ &\lesssim \mathcal E^2\log\left(\frac{1}{\mathcal E} \vee e\right).
        \tag{by Markov inequality}
    \end{align*}
    Taking all into account, we conclude that
    \begin{align*}
        \mathbb{E}\left[ \mathbb{E}_{X\sim f_{\pi}}\left\|\widehat{\theta}_\rho(X)-\widehat{\theta}^\star(X)\right\|^2\right] &\lesssim \mathcal E^2\log^{3 \vee d}\left(\frac{1}{\mathcal E} \vee e\right)
        \lesssim \epsilon^{2\left(1-\frac{2+2\delta}{\log(\log(1/\epsilon) \vee e)}\right)} + n^{-(1-o_d(1))}.
    \end{align*}
    Note that the extra logarithmic factors are absorbed into the slack parameter $\delta > 0$ and $o_d(1)$, respectively.
    Since the choice of $\delta > 0$ is arbitrary, replace $\delta$ with $\delta/2$ to prove the bound \eqref{definition:regret}.
\end{proof}

\end{sloppypar}
\end{document}